\documentclass[10pt,reqno]{amsart}
\usepackage[margin=0.8in]{geometry}
\usepackage{amsthm, amsmath,amsfonts,amssymb,euscript,graphics,color,slashed,bm}
\usepackage{graphicx}
\usepackage{mathrsfs}
\usepackage{comment}
\usepackage{import}
\usepackage{latexsym}
\usepackage[makeroom]{cancel}

\def\norm#1#2{\|#1\|_{#2}}

\def\inte#1{
	\displaystyle\mathop{#1\kern0pt}^\circ }



\let\al=\alpha

\let\e=\varepsilon

\let\lam=\lambda

\let\f=\frac

\let\p=\psi

\let\Lam=\Lambda

\let\wt=\widetilde


\def\cE{{\mathcal E}}
\def\cF{{\mathcal F}}

\def\cL{{\mathcal L}}

\def\cN{{\mathcal N}}

\def\cP{{\mathcal P}}

\def\cS{{\mathcal S}}
\def\cT{{\mathcal T}}


\def\virgp{\raise 2pt\hbox{,}}
\def\cdotpv{\raise 2pt\hbox{;}}

\def\eqdefa{\buildrel\hbox{\footnotesize def}\over =}

\def\C{\mathop{\mathbb C\kern 0pt}\nolimits}
\def\DD{\mathop{\mathbb D\kern 0pt}\nolimits}
\def\EE{\mathop{{\mathbb E \kern 0pt}}\nolimits}
\def\K{\mathop{\mathbb K\kern 0pt}\nolimits}
\def\N{\mathop{\mathbb N\kern 0pt}\nolimits}
\def\Q{\mathop{\mathbb Q\kern 0pt}\nolimits}
\def\R{\mathop{\mathbb R\kern 0pt}\nolimits}
\def\SS{\mathop{\mathbb S\kern 0pt}\nolimits}
\def\ZZ{\mathop{\mathbb Z\kern 0pt}\nolimits}
\def\TT{\mathop{\mathbb T\kern 0pt}\nolimits}
\def\P{\mathop{\mathbb P\kern 0pt}\nolimits}

\newcommand{\Z}{{\ZZ}}



\def\na{\nabla}
\def\p{\partial}

\newcommand{\beq}{\begin{equation}}
	\newcommand{\eeq}{\end{equation}}
\newcommand{\ben}{\begin{eqnarray}}
	\newcommand{\een}{\end{eqnarray}}
\newcommand{\beno}{\begin{eqnarray*}}
	\newcommand{\eeno}{\end{eqnarray*}}

\newtheorem{defi}{Definition}[section]

\newtheorem{lem}{Lemma}[section]

\newtheorem{prop}{Proposition}[section]

\newcommand{\vv}[1]{\boldsymbol{#1}}

\newtheorem*{Main Theorem}{Main Theorem}
\newtheorem{theorem}{Theorem}[section]
\newtheorem{lemma}[theorem]{Lemma}

\newtheorem{remark}[theorem]{Remark}

\setlength{\textwidth}{16cm} \setlength{\oddsidemargin}{0cm}
\setlength{\evensidemargin}{0cm}

\numberwithin{equation}{section}

\begin{document}
	\title[Well-posedness]{Long time existence for a Boussinesq-like system with strong topography variations}

	\author[Qi Li]{Qi Li}
\address{School of Mathematical Sciences, Beihang University\\  100191 Beijing, China}
\email{Ricci@buaa.edu.cn}

\author{Jean-Claude Saut}
\address{Laboratoire de Math\' ematiques, UMR 8628\\
Universit\' e Paris-Saclay et CNRS\\ 91405 Orsay, France}
\email{jean-claude.saut@universite-paris-saclay.fr}

\author[Li XU]{Li Xu}
\address{School of Mathematical Sciences, Beihang University\\  100191 Beijing, China}
\email{xuliice@buaa.edu.cn}
	
	\maketitle
	
\textit{Abstract}.
 This paper investigates the long-time existence of solutions for a Boussinesq-like system modeling surface water waves in shallow water  with {\it strong bottom topography variations} (i.e. $b=O(1)$ where $b$ measures the bathymetry).  The system is derived under the {\it long-wavelength, small-amplitude} regime, with $h=1-b$ represents the mean water depth. 
  The strong bottom topography may exhibit {\it slow ($\na h=O(\e)$) or {\it fast} ($\na h=O(1)$) oscillations}, where the small parameter $\e$ measures the comparable long wave and weak nonlinearity effects.  We establish the long time existence results in two typical regimes:

 \begin{itemize}
\item[1).]\textbf{Slow oscillation ($\na h=O(\e)$):} For both $1D$ and $2D$ cases, we prove the local existence on time scale $O(1/\e)$ under some restrictions on the parameters  ensuring the symmetry of the main dispersive terms. To overcome key challenges arising from $h$-dependent coefficients and the incomplete control of $\na\vv V$ in $2D$, we design  adapted energy functionals and exploit the cancellation for lower-order linear terms in $2D$.
\item[2).]\textbf{Fast oscillation ($\na h=O(1)$):} In $1D$, we extend the natural $O(1)$ existence time  to $O(1/\e)$ for a special 
case maintaining the symmetric principal dispersive structure. The proof uses a strategy that first establishes time regularity before recovering spatial derivatives, supported by carefully designed energy estimates and cancellation of lower-order terms.
\end{itemize} 

These results provide the first rigorous long-time existence theory for Boussinesq-like system with strong topography variations in both $1D$ and $2D$,
 extending significantly  previous work limited to fully symmetric cases.

	\vspace{0.5cm}
	\textbf{Keywords:} Long time existence; Boussinesq-like system; uneven bottom
	
	\setcounter{tocdepth}{1}
	
	\section{Introduction}
	
	\subsection{The Boussinesq-like system with strong bottom topography variations}
	
	In this paper, we study  asymptotic models for gravity surface water waves  in the {\it long wavelength, small amplitude, strong bottom topography variations} regime. More precisely, in this regime, denoting by $\lambda$ a typical wavelength, $a$ a typical amplitude of the surface waves, $h_0$ the typical depth of the fluid, $b_0$ the typical amplitude of the bottom, one assumes that 
	\beno
    \f{a}{h_0}\sim\bigl(\f{h_0}{\lambda}\bigr)^2\ll1,\quad\beta\eqdefa\f{b_0}{h_0}\sim 1
	\eeno

	For simplicity, we will assume from now on that
	\beq\label{regime 2}
\f{a}{h_0}=\bigl(\f{h_0}{\lambda}\bigr)^2=\e\ll1,\quad\beta=1.
	\eeq
	The relation $\beta=O(1)$ corresponds to bottoms with large variations in amplitude so that the asymptotic models for this case have quite different structure from that for the mild case $\beta=O(\e)$. The asymptotic  system for the case $\beta=O(\e)$ is a slight modification  of the classical {\it abcd}-Boussinesq systems, see \cite{BCL, BCS1, BCS2} and it was studied in \cite{FC} where one finds in particular "optimal" error estimates for a 
	class of fully symmetric systems in the spirit of \cite{BCL} for the {\it abcd-} Boussinesq systems.
	
	\vspace{0.3cm}

	We   consider the following Boussinesq-like system,   introduced in \cite{FC}:
\begin{equation}\label{Boussinesq-like system}
    	\left\{   \begin{aligned}
    		&(1-\f\e2 \mathcal{P}_h^1) \p_t \vv V   + \sqrt{h} \nabla \eta + \f\e2 \bigl[ {\vv F}_h(\vv V,\eta) + b_1 \sqrt{h} \nabla \nabla \cdot (h^2 \nabla \eta) + b_2 \sqrt{h} \nabla (  h \nabla h \cdot \nabla \eta ) \\ 
    		&\qquad\qquad\qquad+ b_3 \nabla h \nabla \cdot (h\sqrt{h} \nabla\eta) + b_4 \sqrt{h} \nabla h (\nabla h \cdot \nabla \eta) \bigr] =0, \\
    		&(1-\f\e2 \mathcal{P}_h^2) \partial_t \eta + \nabla\cdot  (\sqrt{h} \vv V ) + \f\e2 \bigl\{f_h(\vv V,\eta) + \nabla \cdot \bigl[  c_1 h^2 \nabla \nabla \cdot (\sqrt{h} \vv V) + c_2 h \nabla h \nabla \cdot (\sqrt{h}\vv V)  \\
    		&\qquad\qquad\qquad+ c_3 h \sqrt{h} \nabla (\nabla h \cdot \vv V ) + c_4 \sqrt{h} \nabla h (\nabla h \cdot \vv V)   \bigr] \bigr\} =0,
    	\end{aligned}\right.   
    \end{equation}
    where  $\vv V=\vv V(t,x)\, (x\in\R^n,\,n=1,2)$ is an $O(\e^2)$ approximation of the horizontal velocity, $\eta=\eta(t,x)$ is the deviation of the free surface from the rest state,  $h=h(x)=1-b(x)\in [0,1]$ is the still water depth, and $b=b(x)$ is the amplitude of the bottom.  In \eqref{Boussinesq-like system},  the operators $\mathcal{P}_h^1$ and $\mathcal{P}_h^2$ are defined by
    \begin{equation}\label{def of Ph}
    	\left\{ \begin{aligned}
    		&\mathcal{P}_h^1 =  a_1  \nabla (h^2 \nabla\cdot\quad)  + a_2  \nabla (h \nabla h \cdot\quad) , \\
    		&\mathcal{P}_h^2 = d_1   \nabla \cdot (h^2 \nabla\quad) +  d_2 \nabla\cdot (h\nabla h\times \quad) ,
    	\end{aligned}\right. 
    \end{equation}
   while  the nonlinear terms ${\vv F}_h(\vv V,\eta)$ and $f_h(\vv V,\eta)$ are defined by
    \begin{equation}\label{def of Fh and fh}
    	\left\{ \begin{aligned}
    		&{\vv F}_h(\vv V,\eta) = \frac{1}{\sqrt{h}} \left[ \eta \nabla \eta + \f12 \nabla |\vv V|^2 + \vv V \cdot \nabla \vv V + \vv V \nabla\cdot \vv V + \f1h \left( \f12 (\vv V\cdot\nabla h  ) \vv V - |\vv V|^2 \nabla h \right) \right],\\
    		&f_h(\vv V,\eta) = \frac{1}{\sqrt{h}} \left( \nabla \cdot (\eta \vv V) - \frac{\eta}{2h} \nabla h \cdot \vv V  \right),
    	\end{aligned}  \right.
    \end{equation}
    where the parameters $\{ a_j,d_j \}_{j=1,2},\{ b_j,c_j \}_{j=1,2,3,4}$ have the following expressions\footnote {Based on the derivations in \cite{FC}, the parameter $c_4$ should be $ -\f12 (\theta-2)^2$, rather than  $\f12 (\theta-2)^2$ as stated in \cite{FC}.}:
     \begin{align} \label{expression of a,b,c,d}
    	\begin{cases}
    		 a_1 = (1-\theta^2)(1-\lam_1),\\
    		 a_2 = 2(1-\theta)(1-\lam_2),\\
    		 d_1 =(1-\mu)(\theta^2 -\f13),\\
    		 d_2 = (1-\mu)(\f32 \theta^2 - \f76),
    	\end{cases}\,
    	\begin{cases}
    		b_1 = \lam_1 (1-\theta^2),\\
    		b_2 = 2\lam_2(1-\theta) - \f32 \lam_1 (1-\theta^2),\\
    	b_3 = \f12 \lam_1 (1-\theta^2) , \\
    		b_4 = \lam_2(1-\theta)  -\f12 \lam_1(1-\theta^2) ,
    	\end{cases}\, \begin{cases}
    	 c_1 = \mu(\theta^2 - \f13) , \\
    	 c_2 = \mu (\f32 \theta^2 -\f76) , \\
     	c_3 = -\f12 \theta^2 + 2\theta - \f76, \\
    	c_4 = -\f12 (\theta-2)^2,
    	\end{cases}
    \end{align}
    and the BBM parameters  $(\lam_1,\lam_2,\mu,\theta)\in \R^3 \times [0,1]$.
    
    We remark that the parameters $(a_1,b_1,c_1,d_1)$ satisfy the constraint
    \begin{align*}
    	a_1  + b_1 + c_1 + d_1 = \f23,
    \end{align*} 
    which is similar to the  "$(a,b,c,d)$"-Boussinesq system's constraint \footnote{The dispersive parameters $(a_1,b_1,c_1,d_1)$ of system \eqref{Boussinesq-like system} should be multiplied by $\f12$.}. More precisely, if the bottom is flat (i.e., $b=0$), the linearization of \eqref{Boussinesq-like system} around the null solutions $(\vv 0,0)$ is 
    \begin{align}\label{Boussinesq-like system, flat bottom}
		\begin{cases}
			(1-\f\e2 a_1 \na\na\cdot) \p_t \vv V + \nabla \eta + \frac{\e}{2} b_1 \nabla \Delta \eta =0, \\
			(1-\f\e2 d_1 \Delta) \p_t \eta +  \nabla \cdot \vv V  + \frac{\e }{2} c_1 \nabla \cdot \Delta \vv V =0,
		\end{cases}  
	\end{align}
	where $\f{a_1}{2} +\f{b_1}{2} + \f{c_1}{2} +\f{ d_1}{2} = \f13$.  

	Notice that \eqref{Boussinesq-like system, flat bottom} is similar to the linearization of the {\it abcd} Boussinesq system at $(\vv 0,0)$ and  has the same  eigenvalues :
	\beno
\lambda_{\pm}=\pm i |\xi|\Bigl(\f{(1-\f\e2 b_1|\xi|^2)(1-\f\e2 c_1|\xi|^2)}{(1+\f\e2 a_1|\xi|^2)(1+\f\e2 d_1|\xi|^2)}\Bigr)^{\f12}.
	\eeno
This shows that the linearization \eqref{Boussinesq-like system, flat bottom} is well-posed provided that
	\begin{align}\label{constraint of Boussinesq}
		\begin{aligned}
			 &a_1\ge 0,\quad d_1 \ge 0,\quad b_1 \le 0,\quad c_1 \le 0, \\
			 \text{or}\quad & a_1\ge 0,\quad d_1 \ge 0,\quad b_1 = c_1 >0.
		\end{aligned}
	\end{align}

    In this paper, we assume that the parameters defined in \eqref{expression of a,b,c,d} satisfy \eqref{constraint of Boussinesq}.
    
     Regarding the dispersive terms appearing in \eqref{Boussinesq-like system}  as the perturbation of the hyperbolic system, the standard energy method suggests that the Cauchy problem for \eqref{Boussinesq-like system} might be locally well-posed on a time scale of $O(\f{1}{|\na h|})$, where $h=1-b$. This assertion was stated in Proposition 3.4 of \cite{FC} for a fully symmetric version of \eqref{Boussinesq-like system}. 
     
     Based on this observation,  the analysis can be split into the following two regimes:
\beno
    \na h=O(\e)\quad\text{and}\quad \na h=O(1),
    \eeno
which corresponds to the slow and fast oscillation of the uneven bottom respectively.  

\vspace{0.3cm}
As for other asymptotic models, such as the classical  Boussinesq systems, the local well-posedness of the Cauchy problem is not sufficient to rigorously justify the model. One needs to establish the {\it long time existence} of solutions, that is on  time scales on which the asymptotic model is supposed to provide a good approximation of the original system in the relevant regime. This problem has been solved for the {\it abcd} Boussinesq systems in \cite{CB,SX1, SX2, SX3, SWX}.

\vspace{0.3cm}
In \cite{FC}, F. Chazel proved the long time existence result on a time scale of $O(\f{1}{\e})$ for \eqref{Boussinesq-like system} in the fully symmetric one-dimensional case \footnote{In the two-dimensional version of \eqref{Boussinesq-like system}, the parameters in  \eqref{expression of a,b,c,d} cannot allow to obtain fully symmetric versions of  \eqref{Boussinesq-like system}. } when $\nabla h = O(\e)$, as well as the local existence theory on  a time scale of $O(1)$ for $\nabla h =O(1)$. To the best of our knowledge, besides the mentioned results in \cite{FC}, there are no other long time results for system \eqref{Boussinesq-like system} in both cases where $\nabla h = O(\e)$ and $\nabla h = O(1)$. In particular, there is no long time result available even in one dimension when $\nabla h = O(1)$.

The goal of this paper is to investigate the long time existence theory on a time scale of $O(\f{1}{\e})$ for the Cauchy problem of \eqref{Boussinesq-like system} in the following two cases:
\begin{itemize}
\item {\bf  Case $\na h=O(\e)$: }
\beq\label{case for slow oscillation}
\begin{aligned}
&\text{for}\,n=1:a_1>0,\,d_1>0,\,b_1=c_1,\,(b_j,c_j)_{j=2,3,4}\in\R^6,\,(a_2,d_2)\in\R^2;\\
&\text{for}\,n=2: a_1>0,\,d_1>0,\,a_2=0,\,d_2\in\R,\,b_1=c_1,\,b_3=-c_3,\,(b_j,c_j)_{j=2,4}\in\R^4;
\end{aligned}\eeq
\item {\bf Case $\na h=O(1)$ and $n=1$: }
\beq\label{case for fast oscillation}
a_1>0,\,a_2=0,\,d_1=d_2=0,\,b_1=c_1<0,\,b_2+c_2+b_3+c_3=0,\,(b_4,d_4)\in\R^2.
\eeq
\end{itemize}
We remark that the restrictions \eqref{case for slow oscillation} and \eqref{case for fast oscillation} for the parameters can be achieved by choosing suitable $(\lambda_1,\lambda_2,\mu,\theta)\in\R^3\times [0,1]$ in \eqref{expression of a,b,c,d}. 

\vspace{0.3cm}  

Let us give a short review of the known results for the asymptotic models in the long wave regimes. We recall that for the {\it abcd}  Boussinesq systems, the long time existence results were established for all generic cases  in \cite{BCL, SX1, SWX} by using  symmetrization techniques, in \cite{CB} by energy methods, and for the strongly dispersive cases in \cite{SX2,SX3} by using normal form techniques. 

On the other hand,  global existence of arbitrary large solutions  were obtained for the 1D Amick-Schonbek system (that is a {\it abcd} Boussinesq system with $a=b=c=0, d=1$)   in \cite{Amick, Schonbek} (see \cite{MTZ} for optimal results) and for some Hamiltonian cases in \cite{BCS2, Hu} for small initial data. The asymptotic dynamics of global small solutions in the one-dimensional Hamiltonian {\it abcd} systems is described in \cite{KM, KMPP} and in \cite{MMP} for the {\it abcd} Boussinesq model with an uneven bottom derived  in \cite{Chen}.

We also mention \cite{Ang} where global  existence is proven close  to a solitary wave for the so-called Kaup-Broer-Kuperschmidt system.  We refer to the book \cite{Kl-Sa} for an extensive discussion of results for the Boussinesq or Boussinesq-like systems.

In contrast, there are far fewer results for asymptotic models in long-wave regimes with uneven bottoms, particularly for strong topographic variations. Peregrine \cite{Peregrine} first derived a Boussinesq system for surface water waves in shallow water over variable topography. The Boussinesq-Peregrine system was later rigorously justified in \cite{Lannes, Lannes-Bonneton}. Local existence was proven on short time intervals for the Boussinesq-Peregrine system, and on a long time scale $O(1/\e)$
 for a modified $1D$ version with large topographic variations, in \cite{BMG}. Recently, a global existence result was obtained in \cite{Mol-Tal} for $1D$ Boussinesq-Peregrine system under small bottom variations that can be viewed as the Amick-Schonbek system with variable bottom .

Beyond the Boussinesq-Peregrine system, a new Boussinesq-like system was formally derived in \cite{Madsen, Nwogu} and rigorously in \cite{ FC}. Another asymptotic model  in the case of  slowly varying bottoms is  derived  in \cite{Iguchi}.

 For large bottom variations, no long-time existence results exist beyond those in \cite{FC}.

	\subsection{Main results}
	In this subsection, we will present our main results regarding cases \eqref{case for slow oscillation} and \eqref{case for fast oscillation} for \eqref{Boussinesq-like system} with the following initial data
	\beq\label{initial data}
\vv V|_{t=0}=\vv V_0,\quad\eta|_{t=0}=\eta_0.
	\eeq

	 Before  going further, we introduce the following functional spaces.
	
	\begin{defi} Let $s \ge 0$, $k\in\N$, $\e>0$. 
		\begin{enumerate}
			\item (Scalar Function space) The Banach space $X_{\e^k}^s(\R^n)$ is defined as $ H^{s+k}(\R^n)$ equipped with the norm:
			\begin{equation*}
				\|u\|_{X_{\e^k}^s}^2 \eqdefa \|u\|_{H^s}^2 + \e^k \|u\|_{H^{s+k}}^2,\quad\forall\, u\in H^{s+k}(\R^n) .
			\end{equation*}
			\item (Vector Function space) The Banach space ${\bf X}_{\e^k}^s(\R^n)$ is defined as 
			\begin{equation*}
			{\bf X}_{\e^k}^s(\R^n)\eqdefa\Bigl\{\vv U\in \bigl(H^{s}(\R^n)\bigr)^n\,\Big|\,	\|\vv U\|_{{\bf X}_{\e^k}^s}^2 \eqdefa \|\vv U\|_{H^s}^2 + \e^k \|\nabla \cdot \vv U\|_{H^{s+k-1}}^2<+\infty\Bigr\}.
			\end{equation*}
		\end{enumerate}
	\end{defi}
	\begin{remark}
For $n=1$, we remark that ${\bf X}_{\e^k}^s(\R)=X_{\e^k}^s(\R)$.
	\end{remark}
	
	We are now in a position to state the main results of this paper. The first main result concerns the case of slow bottom oscillations ($\na h=O(\e)$) described in \eqref{case for slow oscillation}:

	\begin{theorem}[Long time existence with $\nabla h= O(\e)$]\label{main results 1}
	Let $ s>\frac{n}{2}+2$, $n=1,2$, $\e\in(0,1)$ and $h(x)=1-b(x)$ satisfy  the (non-cavitation) condition
      \begin{equation}\label{non cavitation}
			 2 \ge h(x) \ge h_0>0, \,(\forall x\in \R^n), \quad\text{for some }\,h_0>0.
		\end{equation}
		Let $\{ a_j,d_j \}_{j=1,2},\{ b_j,c_j \}_{i=1,2,3,4}$ satisfy the condition \eqref{case for slow oscillation} and 
		$(\vv V_0,\eta_0)\in {\bf X}^s_\e(\R^n) \times X^s_\e(\R^n)$. In the two-dimensional case $n=2$ we further impose that $b_3=-c_3.$
		
		Then there exist $\e_0>0$ and  $T_0>0$ (independent of $\e$) such that for any $\e\in (0,\e_0)$ and 
		\beq\label{slow oscillation}
       \|\nabla h\|_{H^{s+2}} \leq C_0\e,\quad\text{for some }\, C_0>0,
       \eeq 
		system \eqref{Boussinesq-like system}-\eqref{initial data} admits an unique solution $(\vv V,\eta)$ with
		\beq\label{solve space}
			(\vv V,\eta)\in C ( [0,T_0/\e]; {\bf X}^s_\e(\R^n) \times X^s_\e(\R^n) )\cap C^1 \bigl( [0,T_0/\e];{\bf X}_\e^{s-1}(\R^n) \times X_\e^{s-1}(\R^n)  \bigr) . 
		\eeq
		Moreover, 
		\beq\label{total energy estimate for case 1}
		\max_{t\in[0,T_0/\e]}\bigl(\|\vv V\|_{{\bf X}^s_\e}+\|\eta\|_{X^s_\e}
		\bigr)
		\leq C\bigl(\|\vv V_0\|_{{\bf X}^s_\e}+\|\eta_0\|_{X^s_\e}\bigr),
		\eeq
		where $C>0$ is an universal constant. 
	\end{theorem}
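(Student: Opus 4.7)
The strategy is classical in spirit: combine a Friedrichs-type regularization with $\e$-uniform energy estimates propagated up to time $T_0/\e$. Under the non-cavitation condition \eqref{non cavitation} together with $a_1,d_1>0$, the operators $1-\f\e 2\mathcal{P}_h^1$ and $1-\f\e 2\mathcal{P}_h^2$ are self-adjoint, elliptic and bounded below by $1$, hence are isomorphisms ${\bf X}_\e^s(\R^n)\to(H^s(\R^n))^n$ and $X_\e^s(\R^n)\to H^s(\R^n)$. Inverting them after composing with a Friedrichs mollifier $J_N$ recasts the regularized system as an ODE on ${\bf X}_\e^s\times X_\e^s$ which is locally well-posed by Cauchy--Lipschitz; the whole problem then reduces to extracting an a priori bound uniform in $N$ and in $\e$ on the time scale $T_0/\e$.

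\textbf{Energy functional and cancellations.} I would work with the natural energy
\begin{equation*}
E_s^2(t)\eqdefa\sum_{|\alpha|\leq s}\int_{\R^n}\Bigl[|\pa^\alpha\vv V|^2+|\pa^\alpha\eta|^2+\f\e 2 a_1 h^2|\pa^\alpha\nabla\cdot\vv V|^2+\f\e 2 d_1 h^2|\pa^\alpha\nabla\eta|^2\Bigr]\,dx,
\end{equation*}
which is equivalent to $\|\vv V\|_{{\bf X}_\e^s}^2+\|\eta\|_{X_\e^s}^2$ thanks to \eqref{non cavitation}. Applying $\pa^\alpha$ to each equation of \eqref{Boussinesq-like system}, testing against $\pa^\alpha\vv V$ and $\pa^\alpha\eta$ respectively and integrating by parts, the contribution of $1-\f\e 2\mathcal{P}_h^{1,2}$ reproduces $\f12\f{d}{dt}E_s^2$ up to $\nabla h$-commutators, while the first-order coupling $\sqrt h\nabla\eta\leftrightarrow\nabla\cdot(\sqrt h\vv V)$ is skew-symmetric modulo $O(\nabla h)$. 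Three structural facts then drive the estimate: (i) the symmetry $b_1=c_1$ turns the top-order dispersive pairing $(\sqrt h\nabla\nabla\cdot(h^2\nabla\eta),\pa^\alpha\vv V)+(\nabla\cdot(h^2\nabla\nabla\cdot(\sqrt h\vv V)),\pa^\alpha\eta)$ into a perfect time derivative plus remainders in $\nabla h$; (ii) the hypothesis $\nabla h=O(\e)$ of \eqref{slow oscillation} upgrades every such remainder (and every commutator between $\pa^\alpha$ and an $h$-dependent coefficient) to $O(\e)$; (iii) in dimension two, the choices $a_2=0$ and $b_3=-c_3$ kill exactly those lower-order linear terms whose control would otherwise require bounding the full gradient of $\vv V$ rather than only $\nabla\cdot\vv V$. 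Combined with Moser-type product estimates (valid for $s>n/2+2$), this yields $\f{d}{dt}E_s(t)\leq C\e(1+E_s(t))E_s(t)$, and Gronwall plus a standard continuity argument give \eqref{total energy estimate for case 1} on $[0,T_0/\e]$.

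\textbf{Main obstacle.} The delicate point is the two-dimensional case, where the norm $\|\vv V\|_{{\bf X}_\e^s}$ only provides $\e^{1/2}\|\nabla\cdot\vv V\|_{H^s}$, not $\e^{1/2}\|\nabla\vv V\|_{H^s}$. Whenever a commutator between $\pa^\alpha$ and an $h$-dependent coefficient produces a bare $\nabla\vv V$ at the critical level, one must either rewrite it as $\nabla\cdot\vv V$ plus terms carrying two free $\nabla h$ factors (harmless under \eqref{slow oscillation}) or pair it with its twin from the companion equation so that the antisymmetry induced by $b_3=-c_3$ and $a_2=0$ cancels the problematic contribution; this is precisely the rationale for the extra two-dimensional constraints in \eqref{case for slow oscillation}. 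Once the a priori bound is secured, passing to the limit $N\to\infty$ in the regularized scheme and deriving uniqueness at one derivative below are routine consequences of the same structural cancellations.
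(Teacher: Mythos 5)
Your overall route coincides with the paper's: the same $h$-adapted energy (equivalent to $\|\vv V\|_{{\bf X}^s_\e}^2+\|\eta\|_{X^s_\e}^2$ under \eqref{non cavitation} and smallness of $\|\na h\|_{H^s}$, as in Lemma \ref{technical lemma 1}), the same use of $b_1=c_1$ for the third-order terms, of \eqref{slow oscillation} to render all $h$-commutators of size $O(\e)$, of $a_2=0$ and $b_3=-c_3$ in dimension two to compensate for the missing control of $\na\vv V$, and a continuity/Gronwall argument on the scale $T_0/\e$; the mollification/Picard construction of solutions is also what the paper invokes (and leaves routine).

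There is, however, one step you assert but do not supply, and it is not covered by the blanket claim that every commutator is $O(\e)$. Because the quadratic form defining your $E_s$ has $h$-dependent coefficients, substituting the equations into $\f{d}{dt}E_s^2$ leaves residual terms containing the \emph{time} derivatives of the unknowns, e.g. $\e a_1\bigl([\p^\al,h^2]\na\cdot\vv V_t\,\big|\,\p^\al\na\cdot\vv V\bigr)_{L^2}$ and its analogues coming from $\cP_h^2$ and from the non-symmetric coefficients $b_2,c_2,b_4,c_4$ (the paper's term $\cN_1$). These involve $\vv V_t,\eta_t$, which are not bounded by your energy functional, so the inequality $\f{d}{dt}E_s\le C\e(1+E_s)E_s$ does not yet follow. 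One must return to the system, invert $1-\f\e2\cP_h^{1}$ and $1-\f\e2\cP_h^{2}$ at regularity $s-1$, and prove $\|\vv V_t\|_{{\bf X}_\e^{s-1}}+\|\eta_t\|_{X_\e^{s-1}}\lesssim\cE_s^{1/2}+\e\,\cE_s$; the delicate point is that the third-order dispersive terms must first be integrated by parts against $\na\cdot\vv V_t$ (resp.\ $\na\eta_t$) so that the extra derivative is paid by the $\sqrt\e$-weighted components of the $X_\e$-norms rather than by a power of $\e$ (this is Step 4.2 in the proof of Proposition \ref{priori energy estimate}, and it is what produces the $(1+\e\,\cE_s^{1/2})$ structure of the final estimate). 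A related minor inaccuracy: with $b_1=c_1$ the top-order dispersive pairing is not ``a perfect time derivative''; the two contributions from the two equations cancel each other by adjointness up to $\na h$-commutators, and in 2D those surviving commutators must still be arranged so that only $\na\cdot\vv V$ (never the full $\na\vv V$) appears at top order — which is exactly where $b_3=-c_3$ enters, as you correctly anticipated.
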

	\begin{remark}
 (1). Using \eqref{Boussinesq-like system},  we also have
   \beno
   \max_{t\in[0,T_0/\e]}\bigl(\|\vv V_t\|_{{\bf X}^{s-1}_\e}+\|\eta_t\|_{X^{s-1}_\e}\bigr)\leq C\bigl(\|\vv V_0\|_{{\bf X}^s_\e}+\|\eta_0\|_{X^s_\e}\bigr).
   \eeno

(2). For $n=2$, the additional restriction $b_3=-c_3$ is imposed to cancel the effects of the uncontrolled terms involving $b_3$ and $c_3$.  This is necessary because the lack of full information on $\na\vv V$ in the functional energy $\|\vv V\|_{{\bf X}^s_\e}$  prevents these terms from being controlled directly. 
	\end{remark}

	The second main result concerns the case of fast bottom oscillations ($\na h=O(1)$) as defined in \eqref{case for fast oscillation}.

	\begin{theorem}[Long time existence with $\nabla h= O(1)$]\label{main results 2}
		Let $n=1$, $\e\in(0,1)$ and $h(x)=1-b(x)$ satisfy 
		\beq\label{fast oscillation}
        2 \ge h(x) \ge h_0>0, \,(\forall x\in \R),\quad \|\p_x h\|_{H^4}\leq 1,
		\eeq
		for some $h_0>0$.
Let $\{ a_j,d_j \}_{j=1,2},\{ b_j,c_j \}_{i=1,2,3,4}$ satisfy the condition \eqref{case for fast oscillation} and $(V_0,\eta_0)\in X^2_{\e^3}(\R)\times X^2_{\e^2}(\R)$. Then there exist  $\e_0>0 $ and $T_0>0$ (independent of $\e$) such that for any $\e\in (0,\e_0)$, system \eqref{Boussinesq-like system}-\eqref{initial data} has an unique solution $(V,\eta)\in C\bigl([0,T_0/\e];X^2_{\e^3}(\R)\times X^2_{\e^2}(\R)\bigr)$. 
Moreover,
\beq\label{total energy estimate for case 2}\begin{aligned}
&\max_{t\in[0,T_0/\e]}\bigl(\|V\|_{X^2_{\e^3}}^2+\|\eta\|_{X^2_{\e^2}}^2
+\|V_t\|_{X^1_{\e^2}}^2+\|\eta_t\|_{X^1_\e}^2+\|V_{tt}\|_{X^0_{\e}}^2+\|\eta_{tt}\|_{L^2}^2\bigr)\\
&\qquad
\leq C\bigl(\|V_0\|_{X^2_{\e^3}}^2+\|\eta_0\|_{X^2_{\e^2}}^2\bigr),
\end{aligned}\eeq
where $C>0$ is an universal constant. 	
	\end{theorem}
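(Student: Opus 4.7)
The plan is to establish local existence on a short interval using the elliptic regularization coming from $a_1 > 0$, and then upgrade to the long time scale $[0, T_0/\e]$ via a carefully designed a priori energy estimate. The structural feature that distinguishes this case from Theorem \ref{main results 1} is that $d_1 = d_2 = 0$ reduces the operator acting on $\p_t \eta$ to the identity, so the second equation in \eqref{Boussinesq-like system} expresses $\eta_t$ directly in terms of spatial derivatives of $(V, \eta)$, losing up to three derivatives of $V$. By contrast, $a_1 > 0$ preserves the elliptic operator $1 - \frac{\e a_1}{2}\p_x(h^2 \p_x)$ in front of $\p_t V$, providing $O(\sqrt{\e})$ smoothing for $V_t$. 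This asymmetry is the source of the mismatched weights $\e^3$ vs.\ $\e^2$ in $X^2_{\e^3}$ and $X^2_{\e^2}$, and it forces the strategy announced in the abstract: one must first control the time derivatives $(V_t, \eta_t)$ and $(V_{tt}, \eta_{tt})$, and only then recover the full spatial regularity for $(V, \eta)$ by algebraic manipulation of the equations themselves.

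Concretely, I would differentiate the system in time zero, one, and two times, and run energy estimates in the order $(V_{tt}, \eta_{tt}) \Rightarrow (V_t, \eta_t) \Rightarrow (V, \eta)$. At each level, testing the (differentiated) $V$-equation against a suitable weighted multiple of $V$ and the $\eta$-equation against a weighted multiple of $\eta$ and summing, the principal dispersive coupling $\frac{\e b_1}{2}\sqrt{h}\,\p_x(h^2\p_x^2\eta)$ and $\frac{\e c_1}{2}\p_x[h^2\p_x^2(\sqrt{h}V)]$ should, after several integrations by parts and thanks to $b_1 = c_1 < 0$, reduce to a non-negative quadratic form plus remainder terms in which every uncontrolled $h'$-factor is multiplied by an explicit $\e$. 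The lower-order dispersive terms carrying $b_2, b_3, c_2, c_3$ are the real danger: each of them individually contributes a contribution of size $O(1)$ in the energy variation because $h' = O(1)$, and it is precisely the algebraic identity $b_2 + c_2 + b_3 + c_3 = 0$ that lets their sum be reorganized, via integration by parts against $\p_x(\sqrt{h}V)$ and $\p_x\eta$, into integrable $O(\e)$ contributions. The nonlinearities $\vv F_h, f_h$ are handled by routine product and commutator estimates in $H^s$ with $s > 5/2$, explaining the need to keep two spatial derivatives of $(V,\eta)$ in the energy. Once $(V_{tt}, \eta_{tt}, V_t, \eta_t)$ is controlled, the spatial norms $\|V\|_{X^2_{\e^3}}$ and $\|\eta\|_{X^2_{\e^2}}$ are recovered from the equations: solving the first for $\sqrt{h}\p_x\eta$ and inverting the elliptic operator $1 - \frac{\e a_1}{2}\p_x(h^2\p_x)$ trades $V_t$-regularity for $\eta$-regularity with the correct $\e$-weights, while the second equation, being a pure evolution, trades $\eta_t$-regularity for $V$-regularity.

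The main obstacle is taming the fast-oscillating bottom $h(x)$ with $\p_x h = O(1)$: every integration by parts in the symmetrization generates commutator terms with $h', h''$ of unit size, and without the structural conditions in \eqref{case for fast oscillation} the energy would grow at an $O(1)$ rate, yielding only $O(1)$ existence time. A secondary but delicate point is the bookkeeping of the multi-level hierarchy of norms, since each time derivative simultaneously drops one spatial derivative and one power of $\e$; one must verify that the chosen energy functional is equivalent, uniformly in $\e \in (0, \e_0)$, to the left-hand side of \eqref{total energy estimate for case 2}, and that the estimates at the three time levels couple consistently without producing hidden $\e^{-1}$ factors. Once a bound of the form $\frac{d}{dt}\mathcal{E}(t) \leq C\e \,\mathcal{E}(t) + C\e\,\mathcal{E}(t)^{3/2}$ is obtained, a standard bootstrap/continuity argument extends the local solution to $[0, T_0/\e]$, and uniqueness follows from an energy estimate on the difference of two solutions in a weaker space.
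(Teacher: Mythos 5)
Your overall strategy is the same as the paper's (estimate $(\p_t^kV,\p_t^k\eta)$ for $k=0,1,2$ first, then recover the spatial norms from the two equations using the ellipticity coming from $a_1>0$ and $b_1=c_1<0$, then close by a continuity argument), but there is a genuine gap at the highest time-derivative level. The conditions $b_1=c_1$ and $b_2+b_3+c_2+c_3=0$ do \emph{not} make all of the non-symmetric linear contributions $O(\e)$, contrary to your claim: after the cancellation they leave the residual term $\tfrac{\e}{2}\bigl(r(h)\p_x\p_t^k\eta\,\big|\,\p_t^kV\bigr)_{L^2}$ with $r(h)=(\tfrac12 b_3+\tfrac12 c_3+b_4-c_4)\sqrt h(\p_xh)^2+(b_2+c_2)h^{3/2}\p_x^2h$ of size $O(1)$, cf. \eqref{key 1}--\eqref{def of Rh}. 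For $k=0,1$ this is harmless because $\p_x\p_t^k\eta$ sits in the energy, but for $k=2$ the energy only controls $\|\eta_{tt}\|_{L^2}$; estimating directly, or integrating by parts to put $\p_x$ on $V_{tt}$, costs $\e\|\p_xV_{tt}\|_{L^2}\lesssim\sqrt\e\,\|V_{tt}\|_{X^0_\e}$ and hence yields only $\tfrac{d}{dt}\cE\lesssim\sqrt\e\,\cE+\dots$, i.e. an existence time $O(1/\sqrt\e)$, not $O(1/\e)$. The paper resolves exactly this point by substituting the $\eta$-equation for $\p_t^2\eta$ inside this term and rewriting it as $\tfrac{d}{dt}\wt E_2(t)+R_2$ with $|\wt E_2|\lesssim\|V_t\|_{X^1_\e}^2$ and $|R_2|\lesssim\cE+\cE^{3/2}$ (see \eqref{estimate for BC}--\eqref{bound of e2}), so that the correction $\tfrac\e2\wt E_2$ can be absorbed into a modified energy equivalent to $\cE(t)$. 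Without this (or an equivalent modified-energy/normal-form device) your claimed differential inequality $\tfrac{d}{dt}\cE\le C\e\,\cE+C\e\,\cE^{3/2}$ cannot be reached, and the long time scale fails.

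Two further points you should not leave implicit. First, the recovery of spatial regularity requires quantitative two-sided bounds for the variable-coefficient dispersive operators $\sqrt h\,\p_x+\tfrac\e2 B_h(D)$ and $\p_x(\sqrt h\,\cdot)+\tfrac\e2 C_h(D)$ (the analogue of Lemma \ref{lem for BC}: coercivity in $X^0_\e$ when tested against $\p_xf$ and norm equivalence with $\|f_x\|_{X^0_{\e^2}}$), which is where $b_1=c_1<0$ and $\|\p_xh\|_{H^4}\le1$ enter; "inverting the elliptic operator $1-\tfrac{\e a_1}{2}\p_x(h^2\p_x)$" alone does not produce the weights $\e^2,\e^3$ in $X^2_{\e^2},X^2_{\e^3}$. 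Second, since your energy contains $V_t,\eta_t,V_{tt},\eta_{tt}$, you must construct their traces at $t=0$ from $(V_0,\eta_0)$ through the equations and check $\cE(0)\lesssim\|V_0\|_{X^2_{\e^3}}^2+\|\eta_0\|_{X^2_{\e^2}}^2$; otherwise \eqref{total energy estimate for case 2} does not follow from the Gronwall step. These are routine but necessary, and your proposal omits them.
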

	\begin{remark}
(1). The result also holds when $(V_0,\eta_0)\in X^{2+k}_{\e^3}(\R)\times X^{2+k}_{\e^2}(\R)$ for any integer $k\geq 0$.

(2). The construction of the energy functional on the left-hand side of \eqref{total energy estimate for case 2} is crucial for establishing the long time existence result. This approach is tied to the strategy of improving spatial regularity by first improving the temporal regularity.
	\end{remark}

    \subsection{Difficulties, parameter selection and strategies} 

The analysis of long-time existence for the Boussinesq-like system \eqref{Boussinesq-like system} presents two principal challenges originating from significant bottom topography variations (e.g., $h(x)=1-b(x)=O(1)$):
\begin{itemize}
\item[(i)] {\it Dependence on bottom variations:} 
Large bottom variation introduce explicit $h$-dependence in both the linear and dispersive terms of \eqref{Boussinesq-like system}, fundamentally distinguishing it from flat-bottom Boussinesq systems where these terms have constant coefficients. 
\item[(ii)] {\it Structural implications:} This structural difference requires careful parameter selection to preserve essential mathematical properties-particularly symmetry and cancellation mechanisms-which are crucial for ensuring long-time existence of solutions.
\end{itemize}

\smallskip

\noindent$\bullet$ \textbf{Parameter selection}

\smallskip

The well-posedness analysis  is intrinsically linked to the selection of parameters. Based on the  linearized analysis of the flat-bottom case, the dispersion parameters set $(a_1,b_1,c_1,d_1)$ must satisfy the constraints \eqref{constraint of Boussinesq}. To eliminate the coupling effect of depth function $h$ on dispersive terms, we adopt a symmetrized dispersive principal part by setting $b_1=c_1$. The determination of other parameters depends on both the system structure and key technical challenges:

\textbf{ (1). Energy functionals.} In view of system \eqref{Boussinesq-like system}, we design the associated energy functional via  $h$-dependent operators $\cP_h^1$ and $\cP_h^2$ as follows
\beq\label{design of energy}\begin{aligned}
\bigl((1-\f\e2\cP_h^1)\vv V\,\big|\,\vv V\bigr)_{L^2}
&=\|\vv V\|_{L^2}^2+\f{\e}{2}a_1\|h\na\cdot\vv V\|_{L^2}^2
+\f{\e}{2}a_2\bigl(\na h\cdot\vv V\,\big|\,h\na\cdot\vv V\bigr)_{L^2}\\ 
&
\sim \|\vv V\|_{L^2}^2+\e a_1\|h\na\cdot\vv V\|_{L^2}^2\\
\bigl((1-\f\e2\cP_h^2)\eta\,\big|\,\eta\bigr)_{L^2}
&=\|\eta\|_{L^2}^2+\f{\e}{2}d_1\|h\na\eta\|_{L^2}^2
+\f{\e}{2}d_2\bigl(\na h\eta\,\big|\,h\na\eta\bigr)_{L^2}\\ 
&\sim \|\eta\|_{L^2}^2+\e d_1\|h\na\eta\|_{L^2}^2,
\end{aligned}\eeq
provided that $0<h_0\leq h\leq 2$ and
\beno\begin{aligned}
&a_1\geq 0,\,a_2=0,\,\, \text{or}\,\, a_1>0,\, a_2\neq 0,\,\,\|\na h\|_{L^\infty}\ll 1,\\ 
\text{and }\,\, &d_1\geq 0,\,d_2=0,\,\, \text{or}\,\, d_1>0,\, d_2\neq 0,\,\,\|\na h\|_{L^\infty}\ll 1.
\end{aligned}\eeno
The higher-order energy functionals for $(\vv V,\eta)$ are designed in a similar way. In this paper, we impose $a_1>0$.
	
\textbf{(2). Lack of full information on $\na\vv V$ for $n=2$.} For $n=2$, the energy functional defined in \eqref{design of energy}  contains only $\na\cdot\vv V$ and lacks full information on $\na\vv V$. However, the energy estimate for $\vv V$ leads to
\beq\label{eq 1 for introduction}\begin{aligned}
\f12\f{d}{dt}\bigl((1-\f\e2\cP_h^1)\vv V\,\big|\,\vv V\bigr)_{L^2}
&=\bigl((1-\f\e2\cP_h^1)\vv V_t\,\big|\,\vv V\bigr)_{L^2}\\
&\quad+\f\e2 a_2\bigl[\bigl(\na h\cdot\vv V\,\big|\,h\na\cdot\vv V_t\bigr)_{L^2}
-\bigl(\na h\cdot\vv V_t\,\big|\,h\na\cdot\vv V\bigr)_{L^2}\bigr]
\end{aligned}\eeq
The lack of full information on $\na\cdot\vv V$ means that  the cross term with $a_2$ in \eqref{eq 1 for introduction} cannot be controlled unless $a_2=0$.  Similarly, we have to take $b_3=-c_3$ for $n=2$. Thus, for $n=2$, we impose
\beno
a_2=0,\quad b_3=-c_3.
\eeno

\textbf{(3). Difficulty  in higher-order energy estimates.} The first-order energy estimate for $\vv V$ leads to
\beno\begin{aligned}
\f12\f{d}{dt}\bigl((1-\f\e2\cP_h^1)\p_j\vv V\,\big|\,\p_j\vv V\bigr)_{L^2}
&=\bigl(\p_j[(1-\f\e2\cP_h^1)\vv V_t]\,\big|\,\p_j\vv V\bigr)_{L^2}\\ 
&\quad-\f\e2 a_1\bigl(h\p_jh\na\cdot\vv V_t\,\big|\,\na\cdot\p_j\vv V\bigr)_{L^2}
+\cdots.
\end{aligned}\eeno
When $a_1>0$, the spatial derivative $\p_j$ acts on $\cP_h^1$ (and consequently on $h$), producing the cross term $-\f\e2 a_1\bigl(h\p_jh\na\cdot\vv V_t\,\big|\,\na\cdot\p_j\vv V\bigr)_{L^2}$. Estimates on this term suggest an existence time scale of   $O(\f{1}{|\na h|})$. Accordingly, we analyze \eqref{Boussinesq-like system} in two regimes:
\beno
\textit{slow bottom oscillation:}\, \na h=O(\e)\quad\text{and}\quad
\textit{fast bottom oscillation:}\, \na h=O(1).
\eeno 
The slow oscillations,  the standard energy method on $H^s(\R^n)$ yields the desired $O(\f{1}{\e})$ existence time under appropriate parameter choices. For fast oscillations, the expected existence time scale is $O(1)$. Achieving the long time scale $O(\f{1}{\e})$ requires a refined approach: we first improve time regularity (since $\p_t$ commutes with time-independent $h$) and then recover spatial regularity from the equations.  

The fully symmetric case
	\begin{align}
		 a_1\ge 0,d_1\ge 0,a_2 = d_2 =0;\quad b_1 = c_1,\, b_2 = -c_2,\, b_3 = - c_3,\, b_4 = c_4, \label{full symmetric}
	\end{align}
would naturally ensure long-time existence for solutions of system \eqref{Boussinesq-like system}. However, the constraint \eqref{full symmetric} cannot be satisfied for any $\{ \lambda_1,\lam_2,\mu,\theta \}$. Therefore, we  consider systems where only the principal parts maintain symmetry, while developing careful estimates to control the resulting non-symmetric lower-order terms.

	\smallskip

	Based on this analysis, we select parameters for {\it slow} and {\it fast bottom oscillations} as specified in \eqref{case for slow oscillation} and \eqref{case for fast oscillation}. 

	\smallskip
	
\noindent$\bullet$ \textbf{Strategies of proofs}

\smallskip 
	
	{\bf (1).} For Theorem \ref{main results 1} concerning the slow bottom oscillation, we employ standard energy methods for symmetric quasi-linear systems in $H^s(\R^n)$. For $n=2$, cancellations between lower-order linear terms is crucial. 
	
    {\bf (2).} For  Theorem \ref{main results 2} concerning the fast bottom oscillation ($n=1$), the $h$-dependent coefficients in the linear part of system \eqref{Boussinesq-like system} (with $\p_xh = O(1)$) require a two-stage regularity analysis. Since the spatial derivatives in $x$ for the solution $(V,\eta)$ present substantial analytical difficulties, we first derive {\it a priori} energy estimates for higher-order temporal regularity in $t$ for $(V,\eta)$,(specially, $(\p_t^kV,\,\p_x^k\eta)\,$ for $k=0,1,2$), by carefully utilizing both the structural properties of the principal linear operator and the cancellation mechanisms in lower-order terms.  The established temporal regularity of $(V,\eta)$ then enable us to recover the corresponding spatial regularity through the careful analysis of the coupled equations in system \eqref{Boussinesq-like system}.

	\section{Preliminary}
	
	\subsection{Notations}
	
	The notation $f\lesssim g$ means that there exists an universal constant $C>0$ such that $f\leq Cg$. While the notation $g\gtrsim f$ means $f\lesssim g$ and  $f\sim g$ represents  both $f\lesssim g$ and $g\lesssim f$.  
	The constant $C>0$ denotes a universal constant which may changes from line to line. 
	
	If  $A, B$ are two operators, $[A,B]=AB-BA$ denotes their commutator.

	The Fourier transform of a tempered distribution $f\in\cS'(\R^n)$ is defined by
	\begin{align*}
		\widehat{f}(\xi)= \cF(f) (\xi) \eqdefa \int_{\R^n} e^{-ix\cdot \xi} f(x) dx.
	\end{align*}
	We shall use $\cF^{-1}(f)$ to denote the Fourier inverse transform of $f\in\cS'(\R^n)$.
	And the Fourier multiplier $\Lam^s$ is defined by
	\beno
\Lam^s f(x)\eqdefa\cF^{-1}\bigl[(1+|\xi|^2)^{\frac{s}{2}}\widehat{f}(\xi)\bigr],\quad\forall\, f\in\cS'(\R^n).
	\eeno 
	
    The notation $\|\cdot\|_{L^p}$ stands for  the $L^p(\R^n)$ norm for $1\leq p \leq \infty$. For any $s\in\R$ and $p\in [1,\infty]$, $W^{s,p}(\R^n)$  denote the classical  $L^p$ based  non-homogeneous Sobolev spaces with the norm $\|\cdot\|_{W^{s,p}}$. We also use the notation $H^s(\R^n) = W^{s,2}(\R^n)$ for the sake of convenience. 

    The $L^2(\R^n)$-inner product of $f$ and $g$ is defined by
$(f\,|\,g)_{L^2}\eqdefa\int_{\R^n}f\bar{g}dx.$ 
  And the $H^s(\R^n)$-inner product of $f$ and $g$ is denoted by
	\begin{align*}
		( f\,|\,g )_{H^s} \eqdefa ( \Lam^s f\,|\,\Lam^s g )_{L^2}.
	\end{align*}

	For the function $f(t,x)$ defined on $\R_+\times\R$, we denote its derivatives by subscripts for simplicity: $f_t=\p_tf$, $f_x=\p_xf$, with higher derivatives (e.g.,$f_{tt},\,f_{tx},\,f_{xx}$, etc.) following the same convection.

	\subsection{Technical lemmas}
	In this subsection, we present several crucial technical lemmas. 
	Firstly, we recall the classical tame product estimate: 
if $s\geq0$, 
\beq\label{tame}
	\norm{f\cdot g}{H^s} \lesssim \norm{f}{H^s} \norm{g}{L^\infty}+\norm{f}{L^\infty}\norm{g}{H^s},\quad\forall\, f,g\in H^s(\R^n)\cap L^\infty(\R^n).
\eeq

Wee also restate the lemma concerning the classical commutator estimates, as detailed in \cite{Lannes1}, Theorems 3 and 6:
	\begin{lem}\label{commutator lemma} Let $s\ge 0$, $t_0>\frac{n}{2}$ and $-t_0< r \le t_0+ 1$, then for any $f\in H^{t_0+1} \cap H^{s+r}(\R^n)$ and $u\in H^{s+r-1}(\R^n)$, there holds
		\begin{equation}\label{commutator estimate}
			\norm{ [\Lam^s,f] u  }{H^r} \lesssim \begin{cases}
				  \| \nabla f \|_{H^{t_0}} \|u\|_{H^{s+r-1}}, \quad & s+ r -1 \le t_0 \\
				 \norm{\nabla f}{H^{s+r-1}} \norm{u}{H^{s+r-1}}, &s+ r -1 > t_0
			\end{cases}. 
		\end{equation}
	\end{lem}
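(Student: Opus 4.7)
The plan is to use Bony's paradifferential decomposition, writing $fu = T_f u + T_u f + R(f,u)$ (paraproducts plus resonant remainder) and correspondingly splitting
\beno
[\Lam^s, f]u = [\Lam^s, T_f]u + \bigl(\Lam^s T_u f - T_{\Lam^s u}f\bigr) + \bigl(\Lam^s R(f,u) - R(f,\Lam^s u)\bigr).
\eeno
The first piece $[\Lam^s, T_f]u$ is handled by the standard symbolic calculus for paradifferential operators: since $(1+|\xi|^2)^{s/2}$ is smooth and of order $s$, the commutator gains one derivative and yields $\|[\Lam^s, T_f]u\|_{H^r} \lesssim \|\nabla f\|_{L^\infty} \|u\|_{H^{s+r-1}}$.

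For the remaining two pieces, in each dyadic block the frequencies of $f$ are at least comparable to those of $u$, so $\Lam^s$ effectively transfers the derivatives onto $f$. A careful Littlewood--Paley/Bernstein bookkeeping on each block, followed by H\"older in $x$ and $\ell^2$ summation, produces bounds of the form $\|\nabla f\|_{H^{s+r-1}}\|u\|_{L^\infty}$ together with mixed variants where $u$ is measured at low-frequency Sobolev scales instead of $L^\infty$. The restriction $r\le t_0+1$ is used to ensure convergence of the dyadic sum in the resonant piece, and the lower bound $r>-t_0$ is used to close the low-high estimate via duality against an $H^{-r}$ test function.

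Finally, the dichotomy in the conclusion comes from the Sobolev embedding $H^{t_0}\hookrightarrow L^\infty$, valid since $t_0>n/2$. When $s+r-1>t_0$, this embedding directly gives $\|u\|_{L^\infty}\lesssim\|u\|_{H^{s+r-1}}$ and $\|\nabla f\|_{L^\infty}\lesssim \|\nabla f\|_{H^{s+r-1}}$, consolidating all pieces into the second case. When $s+r-1\le t_0$, one uses $\|\nabla f\|_{H^{s+r-1}}\lesssim\|\nabla f\|_{H^{t_0}}$ and bounds $u$ in the low-high/resonant pieces at the low-frequency Sobolev scale by $\|u\|_{H^{s+r-1}}$ rather than $\|u\|_{L^\infty}$, reaching the first case. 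The main obstacle lies in the Bernstein bookkeeping at the endpoints of the admissible range of $r$: at $r=t_0+1$ the dyadic sum becomes logarithmically borderline, and for $r$ close to $-t_0$ the duality argument against $H^{-r}$ test functions must be carried out with care; both are delicate but standard, and form precisely the technical content of Theorems 3 and 6 of Lannes' book.
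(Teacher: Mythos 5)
The paper does not actually prove this lemma: it is quoted verbatim from Lannes' commutator estimates (reference [Lannes1], Theorems 3 and 6), so there is no in-paper argument to compare against. Your paradifferential sketch is the standard route behind that citation, and its outline is sound: the Bony splitting of $[\Lam^s,f]u$, the gain of one derivative in $[\Lam^s,T_f]u$ measured by $\|\na f\|_{L^\infty}$, the transfer of $\Lam^s$ onto $f$ in the low-high and resonant pieces, and the Sobolev embedding $H^{t_0}\hookrightarrow L^\infty$ producing the two cases are exactly the ingredients of Lannes' proof. Two caveats: as written it is an outline rather than a proof, since the points you flag as "delicate but standard" (the cancellation that removes the low-frequency part of $f$ so that only $\na f$ appears, and the endpoint bookkeeping at $r=t_0+1$ and $r$ near $-t_0$) are precisely the technical content you would have to supply, and you defer them back to Lannes; also the paper's source is Lannes' 2006 J. Funct. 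Anal. paper on symbols of limited smoothness, not his water-waves book, though the book's appendix contains the same material.
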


    Furthermore, we need  more refined product estimates that are stated in the following lemma.
	\begin{lem}\label{tame estimate}
		Let $s\ge 0$, $t_0>\frac{n}{2}$, and $h-1\in H^{t_0+1}(\R^n) \cap  H^{s}(\R^n)$ be a real value function satisfying $h\in[0,2]$. Then there holds
		\begin{align}\label{multiplier bounds}
				\norm{h u}{H^s}  \lesssim  (\|\nabla h\|_{H^{t_0}}+\|\nabla h\|_{H^{s-1}})\|u\|_{H^{s-1}}+ \norm{h}{L^\infty}  \norm{u}{H^s},\quad\forall u\in H^s(\R^n). 
		\end{align} 
	\end{lem}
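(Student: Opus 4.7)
The plan is to decompose $\Lam^s(hu)$ into a multiplier piece and a commutator piece, bound the multiplier piece trivially by $L^\infty$ and the commutator piece by invoking Lemma \ref{commutator lemma} with $r=0$. Concretely, I would write
\begin{equation*}
\Lam^s(hu) = h\,\Lam^s u + [\Lam^s, h]\,u,
\end{equation*}
so that
\begin{equation*}
\|hu\|_{H^s} \le \|h\,\Lam^s u\|_{L^2} + \|[\Lam^s,h]u\|_{L^2}.
\end{equation*}
The first term is immediately controlled by $\|h\|_{L^\infty}\|u\|_{H^s}$, which matches the last term in the claimed estimate.

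For the commutator term, note that $[\Lam^s,h] = [\Lam^s,h-1]$ since $\Lam^s$ commutes with constants, and by hypothesis $h-1 \in H^{t_0+1}\cap H^s$, so $\nabla h = \nabla(h-1) \in H^{t_0}\cap H^{s-1}$. The plan is to apply Lemma \ref{commutator lemma} with $f = h$ (equivalently $h-1$) and $r=0$, which requires $0 \in (-t_0, t_0+1]$ (valid since $t_0 > n/2 > 0$) and $u \in H^{s-1}$. This yields the dichotomy
\begin{equation*}
\|[\Lam^s,h]u\|_{L^2} \lesssim \begin{cases} \|\nabla h\|_{H^{t_0}} \|u\|_{H^{s-1}}, & s-1 \le t_0,\\ \|\nabla h\|_{H^{s-1}} \|u\|_{H^{s-1}}, & s-1 > t_0. \end{cases}
\end{equation*}
In either case the right-hand side is bounded by $(\|\nabla h\|_{H^{t_0}} + \|\nabla h\|_{H^{s-1}})\|u\|_{H^{s-1}}$, so taking the sum of the two case bounds works uniformly in $s$.

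Combining these two ingredients gives exactly \eqref{multiplier bounds}. I expect no genuine obstacle here: the whole estimate is a direct consequence of the Kato--Ponce type commutator estimate in Lemma \ref{commutator lemma}, once one observes that the ``bad'' factor $\|h\|_{H^s}$ of the classical tame product estimate \eqref{tame} only appears through the commutator, which costs one derivative and can therefore be recast as $\|\nabla h\|$ acting on $u$ with one fewer derivative. The only mild subtlety is checking the admissible range of indices in Lemma \ref{commutator lemma} for the two regimes $s-1 \le t_0$ and $s-1 > t_0$, which is the reason both norms $\|\nabla h\|_{H^{t_0}}$ and $\|\nabla h\|_{H^{s-1}}$ appear on the right-hand side of \eqref{multiplier bounds}.
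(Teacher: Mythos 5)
Your proposal is correct and is essentially identical to the paper's own proof: the authors also write $\Lam^s(hu) = [\Lam^s,h-1]u + h\Lam^s u$ and apply Lemma \ref{commutator lemma} with $r=0$, absorbing the two cases of the dichotomy into the sum $\|\nabla h\|_{H^{t_0}}+\|\nabla h\|_{H^{s-1}}$. Nothing further is needed.
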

	\begin{proof}
		Since 
		\beno
		\Lam^s (h u) = [\Lam^s,h-1]u + h \Lam^s u,
		\eeno
		 we obtain \eqref{multiplier bounds} by using Lemma \ref{commutator lemma} for $r=0$.
\end{proof}

\begin{lem}\label{composition lem}
 Let $s>\frac{n}{2}+1$, $n=1,2$, $[s]$ be the integer part of $s$ and $h$ be a real valued function taking values on $[h_0,2]$ for some $0<h_0<2$.

(1). If $F\in C^1( [h_0,2] )$ and $h-1\in H^{s}(\R^n)$, there holds
\beq\label{composition}
\|F(h)-F(1)\|_{H^s}\leq C_{\|F'\|_{L^\infty},\|h-1\|_{L^\infty}}\|h-1\|_{H^s}.
\eeq

(2).  If $F\in C^{[s]+1}([h_0,2])$ and $h-1\in H^{s+1}(\R^n)$, there holds
\beq\label{bounds of grad composite}
	\|\nabla (F\circ h)\|_{H^{[s]}}\leq C_{\|F\|_{W^{[s]+1,\infty}},\|\nabla h\|_{H^s}}  \|\nabla h\|_{H^s}.
	\eeq

(3). If $r\in[0,2]$,  $F\in C^{ [s+r]+1 }([h_0,2])$ and $h-1\in H^{s+r+1}(\R^n)$, there holds
\begin{align}\label{composite commutator}
			\norm{[\Lam^s,F\circ h]u}{H^{r}} \leq C_{\norm{F}{W^{[s+r]+1,\infty}},\norm{\nabla h}{H^{s+r}}}  \norm{\nabla h}{H^{s+r}}\norm{u}{H^{s+r-1}}  ,\quad \forall u\in H^{s+r-1}(\R^n). 
		\end{align}
		Here and in what follows, $C_{\lambda_1,\cdots,\lambda_j}$ denotes  a constant that depends on $\lambda_1,\cdots,\lambda_j$ and increases accordingly. 
\end{lem}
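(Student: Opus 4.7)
My plan treats the three parts in order, since (2) and (3) build on (1). The unifying approach is to combine the chain rule with the tame product estimate \eqref{tame}, and to close (3) via (2) together with the commutator Lemma \ref{commutator lemma}.

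For part (1), I would use the fundamental theorem of calculus to factor
\[
F(h)-F(1) = (h-1)\int_0^1 F'\bigl(1+\tau(h-1)\bigr)\,d\tau,
\]
which yields $\bigl\|\int_0^1 F'(1+\tau(h-1))\,d\tau\bigr\|_{L^\infty} \le \|F'\|_{L^\infty}$ at once. The tame product estimate \eqref{tame} then reduces the problem to controlling the integrand in $H^s$, which I would obtain by a Moser-type composition argument: inducting on integer Sobolev levels via the chain rule and \eqref{tame}, interpolating for fractional $s$, and tracking constants so that they depend only on $\|F'\|_{L^\infty}$ and $\|h-1\|_{L^\infty}$.

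For part (2), the chain rule $\nabla(F\circ h) = F'(h)\nabla h$ and the tame product \eqref{tame} give
\[
\|F'(h)\nabla h\|_{H^{[s]}} \lesssim \|F'(h)\|_{L^\infty}\|\nabla h\|_{H^{[s]}} + \|F'(h)-F'(1)\|_{H^{[s]}}\|\nabla h\|_{L^\infty}.
\]
I would bound the first factor trivially by $\|F'\|_{L^\infty}$, use a Moser-type composition estimate (as in part (1)) applied to $F'\in C^{[s]}$ at the integer level $[s]$ to control the second factor by a constant times $\|\nabla h\|_{H^{[s]}} \le \|\nabla h\|_{H^s}$, and invoke the Sobolev embedding $H^{s-1}\hookrightarrow L^\infty$ (valid because $s>n/2+1$) to estimate $\|\nabla h\|_{L^\infty}$ by $\|\nabla h\|_{H^s}$.

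For part (3), I would rewrite $[\Lam^s, F\circ h]u = [\Lam^s, F\circ h - F(1)]u$ and apply Lemma \ref{commutator lemma} with $f = F\circ h - F(1)$. Choosing $t_0$ slightly above $\max(n/2, r-1)$ and strictly below $s+r-1$---a nonempty range because $s>n/2+1$ and $r\in[0,2]$---the second line of \eqref{commutator estimate} yields
\[
\|[\Lam^s, F\circ h]u\|_{H^r} \lesssim \|\nabla(F\circ h)\|_{H^{s+r-1}}\|u\|_{H^{s+r-1}}.
\]
Since $[s+r]\ge s+r-1$, the embedding $H^{[s+r]}\hookrightarrow H^{s+r-1}$ combined with part (2) applied at level $s+r$ bounds the first factor by a constant times $\|\nabla h\|_{H^{s+r}}$, closing the desired estimate.

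The main obstacle is the composition step in (1) under only $C^1$ regularity of $F$: differentiating $F\circ h$ beyond first order is not directly available. The key idea is to transfer each derivative onto $h-1$ by iterating the factorization $(h-1)G(h)$, so that $F$ enters only through $L^\infty$-norms of its first derivative while the full Sobolev regularity is absorbed into the factor $h-1$.
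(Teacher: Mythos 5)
Your part (3) is essentially the paper's own argument (rewrite the commutator with $F\circ h-F(1)$, apply Lemma \ref{commutator lemma} in its second branch, then invoke (2) at level $s+r$), so the real issues are in how you propose to prove (1) and (2). For (2), the tame-product splitting $\|F'(h)\nabla h\|_{H^{[s]}}\lesssim \|F'(h)\|_{L^\infty}\|\nabla h\|_{H^{[s]}}+\|F'(h)-F'(1)\|_{H^{[s]}}\|\nabla h\|_{L^\infty}$ introduces the zero-order factor $\|F'(h)-F'(1)\|_{H^{[s]}}$, and any composition bound for it necessarily costs $\|h-1\|_{H^{[s]}}$, in particular $\|h-1\|_{L^2}$: already for $F(r)=r^2/2$ one has $F'(h)-F'(1)=h-1$, and a wide, gently sloping bump has tiny $\|\na h\|_{H^s}$ but arbitrarily large $\|h-1\|_{L^2}$. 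Hence your route proves a strictly weaker inequality than \eqref{bounds of grad composite}, whose constant depends only on $\|F\|_{W^{[s]+1,\infty}}$ and $\|\na h\|_{H^s}$; this purity is exactly what the paper exploits later (e.g.\ $\|\na\sqrt h\|_{H^{[s]+2}}\lesssim\|\na h\|_{H^{s+2}}\lesssim\e$ in \eqref{E 1}, in a regime where $h-1=O(1)$). The paper avoids the problem by expanding $\p^\al(F\circ h)$ for $1\le|\al|\le[s]+1$ with the Fa\`a di Bruno formula: every term carries at least one derivative of $h$, with $F^{(j)}(h)$ only measured in $L^\infty$, so H\"older plus Sobolev embedding yield a bound purely in powers of $\|\na h\|_{H^s}$. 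Your (3) would be fine once (2) is established in this sharper form, since it only uses \eqref{commutator estimate} and \eqref{bounds of grad composite}.

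For (1), your plan to control $\bigl\|\int_0^1 F'(1+\tau(h-1))\,d\tau\bigr\|_{H^s}$ by an induction "via the chain rule" cannot be carried out under the hypothesis $F\in C^1$: already $\p^2(F\circ h)=F''(h)(\p h)^2+F'(h)\p^2 h$, and no iteration of the factorization $(h-1)G(h)$ removes the second derivative — each successive factor $G_k$ involves $F^{(k)}$ — so the claim that "$F$ enters only through $\|F'\|_{L^\infty}$" is not achievable by this method once $s>1$. The paper does not attempt a self-contained proof here; it simply quotes the composition theorem (Theorem 2.61 in \cite{BCD}), which is proved by paradifferential means and is applied in the paper only to explicit smooth functions such as $r\mapsto\sqrt r$ or $r\mapsto r^2$. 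If you insist on an elementary proof, you must assume (as in parts (2) and (3)) control of higher derivatives of $F$ and accept a constant depending on them; as written, your step bounding the integrand in $H^s$ is a genuine gap.
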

\begin{proof}
(1). The composition estimate \eqref{composition} is a consequence of Theorem 2.61 in \cite{BCD}. 

(2). For \eqref{bounds of grad composite}, we  recall the  Fa\`{a} di Bruno's formula on $F\circ h$ as follows: for $\alpha\in(\Z_{\geq 0})^n$
	    and $|\al|\geq 1$,
		\beno
		\partial^\alpha ( F\circ h ) = \sum_{1\leq j\leq|\al|}\sum_{\beta_1+\cdots+\beta_j=\al} C_{j,\beta_1,\cdots,\beta_j} F^{(j)}(h)\cdot\prod_{l=1}^j\partial^{\beta_l} h,
		\eeno
		where  $|\beta_l|\geq 1\,(l=1,\cdots,j)$, $F^{(j)}(h)\eqdefa\f{d^jF}{dh^j}(h)$, and the coefficients $C_{j,\beta_1,\cdots,\beta_j}$ are positive integers.
	
		Consequently, for any multi-index $\alpha$ such that $ 1\leq|\alpha|\leq [s]+1$, we obtain by using H\"older inequality and Sobolev's embedding theorem that
		\begin{align*}
			\|\partial^\alpha( F\circ h )\|_{L^2} &\lesssim \norm{F}{W^{{[s]+1},\infty}} \sum_{1\leq j\leq|\al|}\sum_{\beta_1+\cdots+\beta_j=\al} \prod_{l=1}^j\|\partial^{\beta_l } h\|_{L^{\f{2|\alpha|}{|\beta_l|}}}\\
			&\lesssim \norm{F}{W^{{[s]+1},\infty}} \sum_{1\leq j\leq|\al|}\sum_{\beta_1+\cdots+\beta_j=\al} \prod_{l=1}^j \| \nabla h\|_{H^{(\frac{1}{2} - \frac{|\beta_l|}{2|\alpha|})n+ |\beta_l| -1 }}.
		\end{align*}
	  Since
	    \begin{align*}
	    	(\frac{1}{2} - \frac{|\beta_l|}{2|\alpha|})n+ |\beta_l|-1=
	    	\bigl(1-\f{n}{2|\al|}\bigr)|\beta_l|+\f{n}{2}-1\leq [s]\leq s,\quad \forall l=1,\cdots,j,
	    \end{align*}
	   we get
	   \beno
\|\partial^\alpha( F\circ h )\|_{L^2}\lesssim\norm{F}{W^{{[s]+1},\infty}} \sum_{1\leq j\leq|\al|}\|\na h\|_{H^s}^j,
	   \eeno
which implies  
		\beno
			 \norm{\nabla (F\circ h)}{H^{[s]}}\lesssim  \sum_{1\leq |\alpha|\leq [s]+1}
		      \norm{\partial^\alpha(F\circ h)}{L^2} \lesssim \sum_{j=1}^{[s]+1} \norm{\nabla h}{H^s}^j \norm{F}{W^{{[s]+1},\infty}} . 
		\eeno
This gives rise to \eqref{bounds of grad composite}.

(3). Thanks to \eqref{commutator estimate}, we get
		\beno\begin{aligned}
\norm{[\Lam^s,F\circ h]u}{H^{r}} &\lesssim
\bigl(\|\na (F\circ h)\|_{H^{t_0}}+\|\na (F\circ h)\|_{H^{s+r-1}}\bigr)\|u\|_{H^{s+r-1}}\lesssim\|\na (F\circ h)\|_{H^{[s+r]}}\|u\|_{H^{s+r-1}},
		\end{aligned}\eeno
where $t_0=1$ if $n=1$ and $t_0=s-1$ if $n=2$. Then we obtain \eqref{composite commutator} by using \eqref{bounds of grad composite}. This completes the proof of the lemma.
\end{proof}
	\begin{remark}
Let $s>\frac{n}{2}+1,\,n=1,2$. Assume that   $F\in C^{[s]+1}(\R)\cap W^{[s]+1,\infty}(\R)$ and $h-1\in H^{s+1}(\R^n)$. Due to \eqref{multiplier bounds}, we get
\beno\begin{aligned}
\norm{(F\circ h) u}{H^s}&\lesssim\|\na(F\circ h)\|_{H^{s-1}}\|u\|_{H^{s-1}}+\|F\circ h\|_{L^\infty}\|u\|_{H^s}\\
&
\lesssim\|\na(F\circ h)\|_{H^{[s]}}\|u\|_{H^{s-1}}+\|F\|_{L^\infty}\|u\|_{H^s}.
\end{aligned}\eeno
which along with \eqref{bounds of grad composite} implies 
	\begin{align}\label{composite multiplier bounds}
			\norm{(F\circ h) u}{H^s} \leq C_{\norm{F}{W^{{[s]+1},\infty}},\norm{\nabla h}{H^s}} \norm{u}{H^s},\quad \forall u\in H^s(\R^n).
		\end{align}
	\end{remark}

	We end this section with a lemma that involves the operators $\cP_h^1$ and $\cP_h^2$ as defined in \eqref{def of Ph}.
	\begin{lem}\label{technical lemma 1}
		Let $s>\frac{n}{2}+1$. Assume that $h-1\in H^{s+1}(\R^n)$ is the real value function such that  $h\in [h_0,2]$ for some constant $h_0>0$ and $\|\na h\|_{H^s}\leq 1$.
		Then there exists a constant $c_0>0$ such that, if
		$\norm{\nabla h}{H^s}\le c_0< 1$ and $\e\in(0,1)$, there hold

		(1). for $a_1>0$,  
			\begin{align}\label{equivalent functional 1}
		\left(  (1-\f\e2 \mathcal{P}_h^1) \vv f \,|\,  \vv f  \right)_{H^s}\sim \norm{\vv f}{{\bf X}_\e^s}^2, \quad\forall\, \vv f\in {\bf X}_\e^s(\R^n),
			\end{align}
			
		(2). for $d_1>0$,  	
		\beq\label{equivalent functional 2}
		\left( (1-\f\e2 \mathcal{P}_h^2)  g \,|\, g \right)_{H^s}\sim\norm{g}{X_\e^s}^2,\quad\forall\,g\in X_\e^s(\R^n). 
			\end{equation}
	\end{lem}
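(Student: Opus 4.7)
The plan is to test the operator $1-\tfrac\e2\cP_h^i$ against the argument in the $H^s$ inner product, integrate by parts to move the outer $\na$ (resp.\ $\na\cdot$) onto the test function, and then extract a coercive principal contribution using the pointwise lower bound $h\ge h_0$. All residual cross terms and commutators will be shown to be $O(c_0)$ by means of the tame estimate \eqref{tame}, the commutator bound \eqref{commutator estimate}, and the composite multiplier estimate \eqref{composite multiplier bounds}, and hence absorbed by the principal pieces once $c_0$ is chosen small enough.

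For part (1), I first write $\bigl((1-\tfrac\e2\cP_h^1)\vv f\,|\,\vv f\bigr)_{H^s}=\|\vv f\|_{H^s}^2-\tfrac\e2(\cP_h^1\vv f\,|\,\vv f)_{H^s}$ and apply integration by parts under the $\Lam^s$-derivatives to rewrite $-\tfrac{\e a_1}{2}(\Lam^s\na(h^2\na\cdot\vv f)\,|\,\Lam^s\vv f)_{L^2}$ as $\tfrac{\e a_1}{2}(\Lam^s(h^2\na\cdot\vv f)\,|\,\Lam^s\na\cdot\vv f)_{L^2}$. Splitting $\Lam^s(h^2\na\cdot\vv f)=h^2\Lam^s(\na\cdot\vv f)+[\Lam^s,h^2](\na\cdot\vv f)$ produces the coercive piece $\tfrac{\e a_1}{2}\|h\Lam^s(\na\cdot\vv f)\|_{L^2}^2\ge \tfrac{\e a_1 h_0^2}{2}\|\na\cdot\vv f\|_{H^s}^2$, while Lemma \ref{commutator lemma} with $r=0$ (applicable since $s-1>n/2$) together with \eqref{tame} yields $\|[\Lam^s,h^2](\na\cdot\vv f)\|_{L^2}\lesssim\|\na h\|_{H^s}\|\na\cdot\vv f\|_{H^s}\le c_0\|\na\cdot\vv f\|_{H^s}$. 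The $a_2$-term, after integration by parts, reads $-\tfrac{\e a_2}{2}(\Lam^s(h\na h\cdot\vv f)\,|\,\Lam^s\na\cdot\vv f)_{L^2}$; applying \eqref{composite multiplier bounds} (together with $\|h\na h\|_{H^s}\lesssim\|\na h\|_{H^s}$ from \eqref{tame}) gives $\|\Lam^s(h\na h\cdot\vv f)\|_{L^2}\lesssim c_0\|\vv f\|_{H^s}$. Cauchy--Schwarz and Young's inequality package all residues into $Cc_0\e(\|\vv f\|_{H^s}^2+\|\na\cdot\vv f\|_{H^s}^2)$, which for $c_0$ small (depending only on $h_0,a_1,a_2$) is absorbed into the principal coercive part, establishing the lower bound $\|\vv f\|_{{\bf X}_\e^s}^2\lesssim((1-\tfrac\e2\cP_h^1)\vv f\,|\,\vv f)_{H^s}$. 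The matching upper bound follows from the same expansion with absolute values and the trivial estimate $\|h\Lam^s(\na\cdot\vv f)\|_{L^2}^2\le 4\|\na\cdot\vv f\|_{H^s}^2$.

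Part (2) is entirely parallel. Integration by parts turns $-\tfrac{\e d_1}{2}(\Lam^s\na\cdot(h^2\na g)\,|\,\Lam^s g)_{L^2}$ into $\tfrac{\e d_1}{2}\|h\Lam^s\na g\|_{L^2}^2+\tfrac{\e d_1}{2}([\Lam^s,h^2]\na g\,|\,\Lam^s\na g)_{L^2}$. The first term dominates $\tfrac{\e d_1 h_0^2}{2}\|\na g\|_{H^s}^2$, and combined with $\|g\|_{H^s}^2$ it controls $\|g\|_{X_\e^s}^2=\|g\|_{H^s}^2+\e\|g\|_{H^{s+1}}^2$ via the Fourier-side identity $\|g\|_{H^{s+1}}^2=\|g\|_{H^s}^2+\|\na g\|_{H^s}^2$. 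The commutator piece and the $d_2$-contribution $\tfrac{\e d_2}{2}(\Lam^s(gh\na h)\,|\,\Lam^s\na g)_{L^2}$ are absorbed by the same mechanism.

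The main technical point I expect to monitor is securing a smallness threshold $c_0$ that is uniform in $\e\in(0,1)$; this works out because every remainder contains both a factor of $\e$ (inherited from $\cP_h^i$) and at least one factor of $\|\na h\|_{H^s}\le c_0$, so the error is of size $c_0\e$ and is dominated by either $\tfrac{\e a_1 h_0^2}{2}\|\na\cdot\vv f\|_{H^s}^2$ or $\|\vv f\|_{H^s}^2$ once $c_0$ is chosen small enough depending only on $h_0$ and the parameters $a_1,a_2,d_1,d_2$.
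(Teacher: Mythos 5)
Your proposal is correct and follows essentially the same route as the paper: expand the $H^s$ pairing, integrate by parts to land on $\na\cdot\vv f$ (resp.\ $\na g$), extract the coercive piece $\tfrac{\e a_1}{2}\|h\Lam^s\na\cdot\vv f\|_{L^2}^2\ge\tfrac{\e a_1h_0^2}{2}\|\na\cdot\vv f\|_{H^s}^2$ via $h\ge h_0$, and absorb the commutator and $a_2$-, $d_2$-cross terms, bounded through \eqref{commutator estimate}, \eqref{tame} and \eqref{composite multiplier bounds}, by choosing $c_0$ small independently of $\e$. (The sign you record for the $a_2$-term after integration by parts is off, but this is immaterial since that term is estimated in absolute value.)
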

	\begin{proof}
			(1). By the definition of $\mathcal{P}_h^1$ in \eqref{def of Ph}, one infers that
		\beq\label{Eq 1}\begin{aligned}
			\bigl( (1-\f\e2 \mathcal{P}_h^1) \vv f\,|\, \vv f   \bigr)_{H^s}\!\!=\, & \norm{\vv f}{H^s}^2\! +\f\e2 a_1  \bigl( h^2 \nabla \cdot \vv f \,|\, \nabla \cdot \vv f \bigr)_{H^s} + \f\e2 a_2 \bigl( h\nabla h\cdot \vv f\,|\,\nabla \cdot \vv f  \bigr)_{H^s} \\
			=\,&  \norm{\vv f}{H^s}^2 \! + \f\e2 a_1  \norm{h\Lam^s\nabla \cdot\vv f}{L^2}^2 +\f\e2 I_1,
		\end{aligned}\eeq
		where 
		\beno
I_1\eqdefa a_1  \bigl( [\Lam^s,h^2]\nabla \cdot \vv f \,|\, \Lam^s \nabla \cdot \vv f  \bigr)_{L^2} +  a_2 \bigl( h\nabla h\cdot \vv f\,|\,\nabla \cdot \vv f  \bigr)_{H^s}
		\eeno
		
		Since $0<h_0\leq h(x)\leq 2$, there holds
		\beq\label{Eq 2}
h_0^2\|\na\cdot\vv f\|_{H^s}^2\leq \norm{h\Lam^s\nabla \cdot\vv f}{L^2}^2
\leq 4\|\na\cdot\vv f\|_{H^s}^2.
		\eeq

	For the first term of $I_1$,   by using \eqref{composite commutator} with $F(r)=r^2$, we get 
	\beno
\bigl|\bigl( [\Lam^s,h^2]\nabla \cdot \vv f \,|\, \Lam^s \nabla \cdot \vv f  \bigr)_{L^2}\bigr|
\leq C_{\|\na h\|_{H^s}}\|\na h\|_{H^s}\|\na\cdot\vv f\|_{H^{s-1}}\|\na\cdot\vv f\|_{H^s}\leq C_1\|\na h\|_{H^s}\|\vv f\|_{H^s}\|\na\cdot\vv f\|_{H^s},
	\eeno
	where $C_1>0$ is a constant, and we have used the assumption $\|\na h\|_{H^s}\leq 1$ in the last inequality.

	For the second term of $I_1$, due to the tame estimate,\eqref{composite multiplier bounds} and the condition $\|\na h\|_{H^s}\leq 1$, we have
	\beno
\bigl|\bigl( h\nabla h\cdot \vv f\,|\,\nabla \cdot \vv f  \bigr)_{H^s}\bigr|
\leq C_{\|\na h\|_{H^s}}\|\na h\|_{H^s}\|\vv f\|_{H^s}\|\na\cdot\vv f\|_{H^s}
\leq C_2\|\na h\|_{H^s}\|\vv f\|_{H^s}\|\na\cdot\vv f\|_{H^s},
\eeno
where $C_2>0$ is a constant. Then there exists a constant $C_3>0$ such that
\beq\label{Eq 3}
|I_1|\leq \|\vv f\|_{H^s}^2+ C_3\|\na h\|_{H^s}^2\|\na\cdot\vv f\|_{H^s}^2,
\eeq
which along with \eqref{Eq 1} and \eqref{Eq 2} leads to 
\beno
\bigl( (1-\f\e2 \mathcal{P}_h^1) \vv f\,|\, \vv f \bigr)_{H^s}
\geq(1-\f{\e}{2})\|\vv f\|_{H^s}^2+\f\e2\bigl(a_1h_0^2-C_3\|\na h\|_{H^s}^2\bigr)\|\na\cdot\vv f\|_{H^s}^2.
\eeno

For $a_1>0$, taking  $c_1=\min\{\sqrt{\f{a_1}{2C_3}}h_0,1\}$, there holds for any $\|\na h\|_{H^s}\leq c_1\leq 1$,
\beq\label{Eq 4}
\bigl( (1-\f\e2 \mathcal{P}_h^1) \vv f\,|\, \vv f \bigr)_{H^s}
\geq (1-\f{\e}{2})\|\vv f\|_{H^s}^2+\f\e4a_1h_0^2\|\na\cdot\vv f\|_{H^s}^2.
\eeq

On the other hand, using \eqref{Eq 2}, \eqref{Eq 3} and the condition $\|\na h\|_{H^s}\leq 1$, we deduce from \eqref{Eq 1}that 
\beno
\bigl( (1-\f\e2 \mathcal{P}_h^1) \vv f\,|\, \vv f \bigr)_{H^s}
\leq(1+\f{\e}{2})\|\vv f\|_{H^s}^2+\f\e2(4+C_3)\|\na\cdot\vv f\|_{H^s}^2,
\eeno
which along with \eqref{Eq 4} implies that
\beq\label{Eq 5}
\bigl( (1-\f\e2 \mathcal{P}_h^1) \vv f\,|\, \vv f \bigr)_{H^s}
\sim\|\vv f\|_{H^s}^2+\e\|\na\cdot\vv f\|_{H^s}^2=\|\vv f\|_{{\bf X}_\e^s}^2,
\eeq
provided that $\|\na h\|_{H^s}\leq c_1\leq1$. This is exactly \eqref{equivalent functional 1}.

\smallskip
		
(2). By the definition of $\mathcal{P}_h^2$ in \eqref{def of Ph}, one has
		\begin{align}\label{calculation for g}
			\bigl(  (1-\f\e2 \mathcal{P}_h^2) g\,|\,g \bigr)_{H^s} &= \norm{g}{H^s}^2 + \f\e2 d_1 \bigl( h^2 \nabla g\,|\, \nabla g  \bigr)_{H^s} + \f\e2 d_2 \bigl( h\nabla h\, g\,|\, \nabla g  \bigr)_{H^s}\notag \\
			&= \norm{g}{H^s}^2 + \f\e2 d_1 \|h\Lam^s\nabla g\|_{L^2}^2 + \f\e2 I_2, 
		\end{align}
		where
		\begin{align*}
			I_2\eqdefa d_1  \bigl( [\Lam^s,h^2]\nabla g \,|\, \Lam^s \nabla g \bigr)_{L^2} + d_2 \bigl( h\nabla h\,g\,|\,\nabla g  \bigr)_{H^s}
		\end{align*}

		A similar derivation as for \eqref{Eq 3} leads to
		\beno
|I_2|\leq \|g\|_{H^s}^2+ C_4\|\na h\|_{H^s}^2\|\na g\|_{H^s}^2,
		\eeno
		for some constant $C_4>0$. Then for $d_1>0$, there exists $c_2\in(0,1)$ such that 
		\beno
\bigl(  (1-\f\e2 \mathcal{P}_h^2) g\,|\,g \bigr)_{H^s}\sim \|g\|_{H^s}^2+\e\|\na g\|_{H^s}^2=\|g\|_{X_\e^s}^2,
		\eeno
provided that $\|\na h\|_{H^s}\leq c_2$. This is exact \eqref{equivalent functional 2}. 

	Thus, there exists a constant $c_0=\min\{c_1,c_2\}\in(0,1)$, such that both \eqref{equivalent functional 1} and \eqref{equivalent functional 2} hold. The proof of the lemma is completed.
	\end{proof}

	\section{Proof of Theorem \ref{main results 1}}
	The aim of this section is to establish Theorem \ref{main results 1}, that is to show the long time existence of solutions to the Cauchy problem of \eqref{Boussinesq-like system} under the assumption of  slowly oscillating bottom in the cases  specified in \eqref{case for slow oscillation}. The proof   relies heavily on the continuity arguments and the {\it a priori } energy estimates. 

\subsection{ {\it a priori} energy estimate}

	We only provide the proof for the two-dimensional case here; the proof of the one-dimensional case is significantly simpler.

	For $n=2$, we restate the restrictions on the parameters $\{ a_j,d_j \}_{j=1,2}$ and $\{ b_j,c_j \}_{j=1,2,3,4}$ as given in \eqref{case for slow oscillation} as follows:
 \beno
a_1>0,\,d_1>0,\,a_2=0,\,d_2\in\R,\,b_1=c_1,\,b_3=-c_3,\,(b_j,c_j)_{j=2,4}\in\R^4.
\eeno
Correspondingly, the expressions of $\mathcal{P}_h^j\,(j=1,2)$ are  as follows
	\begin{align}\label{def of Phi, d>1}
		&\quad \left\{ \begin{aligned}
			\mathcal{P}_h^1 &= a_1 \nabla (h^2 \nabla \cdot \quad ) , \\
			\mathcal{P}_h^2 & = d_1 \nabla\cdot (h^2 \nabla \quad) +d_2 \nabla \cdot (h\nabla h\times \quad), 
		\end{aligned} \right.
	\end{align}
	while system \eqref{Boussinesq-like system}  reads as 
	\begin{equation}\label{Boussinesq-like case 1}
	\left\{   \begin{aligned}
		(1-\f\e2 \mathcal{P}_h^1) \p_t \vv V   + \sqrt{h} \nabla \eta + \f\e2 \Bigl[ {\vv F}_h(\vv V,\eta) + b_1 \sqrt{h} \nabla \nabla \cdot (h^2 \nabla \eta) + b_2 \sqrt{h} \nabla (  h \nabla h \cdot \nabla \eta ) \quad \quad \quad\quad \,\,\,\\ 
		+ b_3 \nabla h \nabla \cdot (h\sqrt{h} \nabla\eta) + b_4 \sqrt{h} \nabla h (\nabla h \cdot \nabla \eta) \Bigr] =0, \\
		(1-\f\e2 \mathcal{P}_h^2) \partial_t \eta + \nabla\cdot  (\sqrt{h} \vv V ) + \f\e2 \Bigl[   f_h(\vv V,\eta) + \nabla \cdot \bigl(   b_1 h^2 \nabla \nabla \cdot (\sqrt{h} \vv V) + c_2 h \nabla h \nabla \cdot (\sqrt{h}\vv V)  \quad \quad \\
		- b_3 h \sqrt{h} \nabla (\nabla h \cdot \vv V ) + c_4 \sqrt{h} \nabla h (\nabla h \cdot \vv V)   \bigr) \Bigr] =0,
	\end{aligned}\right.   
	\end{equation}
	where $\vv F_h(\vv V,\eta)$ and $f_h(\vv V,\eta)$ are defined in \eqref{def of Fh and fh}. 

	The {\it a priori} energy estimates for system \eqref{Boussinesq-like system} under restriction \eqref{case for slow oscillation} is stated in the following proposition. For $n=2$, the system \eqref{Boussinesq-like system} reduces to \eqref{Boussinesq-like case 1}.

\begin{prop}\label{priori energy estimate}
		Let ${\color{red} s>\frac{n}{2}+2}$, $n=1,2$, $T_0>0$, $\e\in(0,1)$, 
		$\{ a_j,d_j \}_{j=1,2}$ and $\{ b_j,c_j \}_{i=1,2,3,4}$ satisfy the conditions outlined in \eqref{case for slow oscillation}. Assume that $h$ satisfies
\beq\label{condition for h}
\|\na h\|_{H^{s+2}(\R^n)}\leq C\e\leq c_0\leq1,\quad 2\geq h(x)\geq h_0>0,\quad \text{for some}\,h_0>0,
\eeq
where $C>0$ is a universal constant and $c_0>0$ is the constant stated in Lemma \ref{technical lemma 1}. Then for any sufficiently smooth solutions  $(\vv V,\eta)$ of \eqref{Boussinesq-like system}-\eqref{initial data} over $[0,\f{T_0}{\e}]$, there exist constants $C_1>1$ and $C_2>1$ such that
\beq\label{priori estimate for case 1}
\cE_s(t)
\leq C_1\cE_s(0)+C_2\e t\max_{\tau\in[0,t]}\cE_s(\tau)\bigl(1+\e\cE_s(\tau)^{\f12}\bigr),
\quad\forall\,t\in[0,\f{T_0}{\e}],
\eeq
where 
$
\cE_s(t)\eqdefa\norm{\vv V(t,\cdot)}{{\bf X}_\e^s}^2 + \norm{\eta(t,\cdot)}{X_\e^s}^2.
$
\end{prop}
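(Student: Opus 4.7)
The plan is to derive the estimate \eqref{priori estimate for case 1} by a standard energy method adapted to the $h$-dependent quadratic forms appearing on the left-hand sides of \eqref{Boussinesq-like case 1}, with careful exploitation of the symmetry $b_1=c_1$ (and, for $n=2$, $b_3=-c_3$) together with the smallness $\|\nabla h\|_{H^{s+2}}\le C\varepsilon$. By Lemma~\ref{technical lemma 1} and the assumption \eqref{condition for h}, we have
\[
\mathcal{E}_s(t)\sim \bigl((1-\tfrac{\varepsilon}{2}\mathcal{P}_h^1)\Lambda^s\vv V\,|\,\Lambda^s\vv V\bigr)_{L^2}+\bigl((1-\tfrac{\varepsilon}{2}\mathcal{P}_h^2)\Lambda^s\eta\,|\,\Lambda^s\eta\bigr)_{L^2},
\]
so it suffices to bound the time derivative of the right-hand side. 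Since $h$ is time-independent and $\mathcal{P}_h^1$ is symmetric on $L^2$, applying $\Lambda^s$ to the first equation of \eqref{Boussinesq-like case 1} and pairing with $\Lambda^s\vv V$ in $L^2$ yields, up to $\tfrac{\varepsilon}{2}([\Lambda^s,\mathcal{P}_h^1]\vv V_t\,|\,\Lambda^s\vv V)_{L^2}$, the quantity $\tfrac{1}{2}\tfrac{d}{dt}\bigl((1-\tfrac{\varepsilon}{2}\mathcal{P}_h^1)\Lambda^s\vv V\,|\,\Lambda^s\vv V\bigr)_{L^2}$; do the analogous pairing for the $\eta$-equation, and add.

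The key cancellations I would extract are as follows. First, the principal coupling $-(\Lambda^s(\sqrt h\,\nabla\eta)\,|\,\Lambda^s\vv V)_{L^2}-(\Lambda^s\nabla\!\cdot\!(\sqrt h\,\vv V)\,|\,\Lambda^s\eta)_{L^2}$ vanishes after integration by parts modulo commutators $[\Lambda^s,\sqrt h]\nabla\eta$ and $[\Lambda^s,\sqrt h]\nabla\!\cdot\!\vv V$, which by Lemma~\ref{composition lem}\,(3) are controlled by $\|\nabla h\|_{H^{s}}\|\cdot\|_{H^{s-1}}$ and hence carry an extra factor $\varepsilon$ by \eqref{condition for h}. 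Second, since $b_1=c_1$, the two principal dispersive pairings $(\Lambda^s(\sqrt h\,\nabla\nabla\!\cdot\!(h^2\nabla\eta))\,|\,\Lambda^s\vv V)_{L^2}$ and $(\Lambda^s\nabla\!\cdot\!(h^2\nabla\nabla\!\cdot\!(\sqrt h\,\vv V))\,|\,\Lambda^s\eta)_{L^2}$ cancel after three integrations by parts, again up to commutators involving $\nabla h$ which absorb a factor $\varepsilon$. For $n=2$, the terms with $b_3\nabla h\,\nabla\!\cdot\!(h\sqrt h\,\nabla\eta)$ and $-b_3 h\sqrt h\,\nabla(\nabla h\cdot\vv V)$ cannot be estimated individually (since $\mathcal{E}_s$ controls only $\nabla\!\cdot\!\vv V$ and not full $\nabla\vv V$), but the choice $b_3=-c_3$ ensures that when paired with $\Lambda^s\vv V$ and $\Lambda^s\eta$ respectively, the $\nabla h\otimes\nabla h$ structure allows integration by parts that converts the problematic $\nabla\vv V$ into $\nabla\!\cdot\!\vv V$ plus lower-order terms multiplied by $\nabla h=O(\varepsilon)$.

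The remaining terms are then treated as perturbations: the dispersive contributions with $b_2,b_4,c_2,c_4,a_2,d_2$ each carry a factor of $\nabla h=O(\varepsilon)$ and are bounded via the tame estimate \eqref{tame}, \eqref{multiplier bounds}, and \eqref{composite multiplier bounds} by $\varepsilon\,\mathcal{E}_s(t)$; the commutator $\frac{\varepsilon}{2}([\Lambda^s,\mathcal{P}_h^1]\vv V_t\,|\,\Lambda^s\vv V)_{L^2}$ and its analogue are reduced to $\vv V_t,\eta_t$ which are then re-expressed via the equations \eqref{Boussinesq-like case 1} themselves, again producing a factor $\varepsilon$ after using $\nabla h=O(\varepsilon)$ and Lemma~\ref{composition lem}. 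The nonlinear terms $\vv F_h(\vv V,\eta)$ and $f_h(\vv V,\eta)$ from \eqref{def of Fh and fh} contribute the higher-order $\varepsilon^2\mathcal{E}_s(t)^{3/2}$ piece in \eqref{priori estimate for case 1} via \eqref{tame} and \eqref{composite multiplier bounds}, together with the Sobolev embedding $H^{s-1}\hookrightarrow L^\infty$ valid for $s>\frac{n}{2}+1$. Collecting all contributions gives
\[
\tfrac{d}{dt}\mathcal{E}_s(t)\le C\varepsilon\,\mathcal{E}_s(t)\bigl(1+\varepsilon\mathcal{E}_s(t)^{1/2}\bigr),
\]
and integrating over $[0,t]$ for $t\le T_0/\varepsilon$ yields \eqref{priori estimate for case 1}.

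The main obstacle will be the second item above: organizing the dispersive cancellations at the $H^s$ level so that every surviving commutator is either genuinely of lower order (bounded by the technical lemmas of Section~2) or carries an explicit factor of $\nabla h$, which by \eqref{condition for h} converts naive $O(1)$-in-time bounds into the required $O(\varepsilon)$ bounds. The $n=2$ case additionally demands bookkeeping to route every appearance of $\nabla\vv V$ through either $\nabla\!\cdot\!\vv V$ (controlled by $\mathcal{E}_s$) or through a factor of $\nabla h$ (smallness); the identity $b_3=-c_3$ is precisely what makes this routing possible for the otherwise uncontrollable dispersive cross-term.
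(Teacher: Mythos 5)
Your proposal is correct and follows essentially the same route as the paper: the energy $\bigl((1-\tfrac\e2\cP_h^1)\vv V\,|\,\vv V\bigr)_{H^s}+\bigl((1-\tfrac\e2\cP_h^2)\eta\,|\,\eta\bigr)_{H^s}$ equivalent to $\cE_s$ via Lemma~\ref{technical lemma 1}, cancellation of the principal coupling and of the $b_1=c_1$ dispersive pairings up to commutators with functions of $h$ (hence $O(\e)$ by \eqref{condition for h} and Lemma~\ref{composition lem}), the exact cancellation of the two uncontrollable $b_3=-c_3$ contributions in $2D$, the non-symmetric $\cP_h^1,\cP_h^2$ and lower-order dispersive terms handled by estimating $(\vv V_t,\eta_t)$ in ${\bf X}_\e^{s-1}\times X_\e^{s-1}$ directly from the equations, and tame estimates for $\vv F_h,f_h$. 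The only minor discrepancy is bookkeeping: the nonlinear terms contribute $\e\,\cE_s^{3/2}$ (not $\e^2\cE_s^{3/2}$) to $\tfrac{d}{dt}E_s$, exactly as in the paper's estimate of $\cT$, which is harmless for the continuity argument.
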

	\begin{proof}
		We only sketch the proof of case $n=2$. The proof is {divided} into several steps. 

		{\bf Step 1. Energy estimates.} Firstly, we define the energy functional  associated with \eqref{Boussinesq-like case 1} as follows:
\beq\label{energy functional for case 1}
E_s(t)\eqdefa\bigl((1-\f\e2\cP_h^1)\vv V\,|\,\vv V\bigr)_{H^s}+\bigl((1-\f\e2\cP_h^2)\eta\,|\,\eta\bigr)_{H^s}.
		\eeq
Thanks to Lemma \ref{technical lemma 1}, there holds
		\begin{align}\label{equivalent energy function}
    	 E_s(t) \sim \norm{\vv V}{{\bf X}_\e^s}^2 + \norm{\eta}{X_\e^s}^2=\cE_s(t).
    \end{align}

For system \eqref{Boussinesq-like case 1}, a direct calculation yields to
		\begin{equation}\label{calculation of energy}
			\f12\frac{d}{dt} E_s(t) = \mathcal{S} + \mathcal{T} + \mathcal{N},  
		\end{equation}
		where
		\beno\begin{aligned}
			\mathcal{S} &  \eqdefa- \left\{  \bigl( \sqrt{h} \nabla \eta  \,|\, \vv V  \bigr)_{H^s} +\left( \nabla \cdot (\sqrt{h} \vv V) \,|\, \eta \right)_{H^s}\right\} \\
			&\,\, \quad -\f\e2 b_1 \left\{  \bigl( \sqrt{h} \nabla \nabla \cdot (h^2 \nabla \eta)\,|\, \vv V  \bigr)_{H^s} + \bigl(   \nabla \cdot (h^2 \nabla \nabla \cdot (\sqrt{h} \vv V)) \,|\, \eta  \bigr)_{H^s}  \right\}  \\
			&\,\, \quad -\f\e2 b_3 \left\{ \bigl( \nabla h \nabla \cdot  (h\sqrt{h}\nabla \eta  )\,|\, \vv V \bigr)_{H^s} - \bigl( \nabla \cdot  (h\sqrt{h} \nabla (\nabla h\cdot \vv V) ) \,|\, \eta   \bigr)_{H^s}    \right\} \\
			&\eqdefa \mathcal{S}_1 -\f\e2 b_1\mathcal{S}_2 -\f\e2 b_3\mathcal{S}_3,\\
			\mathcal{T}& \eqdefa -\f\e2 \left( {\vv F}_h(\vv V,\eta)\,|\, \vv V  \right)_{H^s} - \f\e2 \left(  f_h(\vv V,\eta) \,|\, \eta \right)_{H^s},\\
			\mathcal{N} & \eqdefa \f\e4 \Bigl\{\left( \mathcal{P}_h^1 \vv V_t \,|\, \vv V  \right)_{H^s} - \left(  \mathcal{P}_h^1\vv V \,|\, \vv V_t \right)_{H^s} + \left( \mathcal{P}_h^2 \eta_t  \,|\, \eta \right)_{H^s} -\left( \mathcal{P}_h^2 \eta  \,|\, \eta_t \right)_{H^s}  \Bigr\}  \\
			&\quad \,\, - \f\e2 \Bigl\{    b_2\bigl(   \sqrt{h} \nabla ( h\nabla h\cdot \nabla \eta) \,|\,\vv V   \bigr)_{H^s}  - c_2 \bigl( h \nabla h \nabla \cdot (\sqrt{ h}\vv V)  \,|\, \nabla \eta \bigr)_{H^s}  \\
			&\quad \,\, + b_4 \bigl(   \sqrt{h} \nabla h(\nabla h\cdot \nabla \eta) \,|\,\vv V   \bigr)_{H^s}- c_4\bigl(     \sqrt{h} \nabla h(\nabla h\cdot \vv V)   \,|\, \nabla\eta  \bigr)_{H^s}\Bigr\}\\
			&\eqdefa \f\e4\mathcal{N}_1 -\f\e2 \mathcal{N}_2.
		\end{aligned}\eeno

	{\bf Step 2. Estimate of $\mathcal{S}$.} We estimate $\mathcal{S}_j\,(j=1,2,3)$ one by one.
	
	\textit{Step 2.1. Estimate of $\mathcal{S}_1$.} Due to the definition of $\mathcal{S}_1$, we have
	\begin{align*}
		\mathcal{S}_1& = - \left( \Lambda^s(\sqrt{h} \nabla \eta ) \,|\, \Lambda^s\vv V  \right)_{L^2} + \left( \Lambda^s(\sqrt{h} \vv V) \,|\, \Lambda^s\na\eta \right)_{L^2} \\
		&
		 = - \left([\Lam^s, \sqrt{h}] \nabla \eta \,|\, \Lam^s \vv V  \right)_{L^2} - \left(  \Lam^s\eta\,|\,\na( [\Lam^s,\sqrt{h}] \vv V) \right)_{L^2}. 
	\end{align*}
	
	Using  the assumption $\|\na h\|_{H^{s+2}(\R^n)}\leq C\e \leq c_0\leq1$, \eqref{commutator estimate} and \eqref{composite commutator} with $F(x) = \sqrt{x}$, we obtain
	\beq\label{estimate of S1}\begin{aligned}
		|\mathcal{S}_1| &\lesssim \|\nabla h\|_{H^s} \|\nabla\eta\|_{H^{s-1}} \|\vv V\|_{H^s} + \|\nabla h\|_{H^{s+1}} \|\vv V\|_{H^s} \|\eta\|_{H^s}\\
		 & \lesssim\|\nabla h\|_{H^{s+1}} \norm{\eta}{H^s} \norm{\vv V}{H^s} \lesssim  \e \norm{\eta}{H^s} \norm{\vv V}{H^s} \lesssim \e \cE_s(t).
	\end{aligned}\eeq

	\textit{Step 2.2. Estimate of $\mathcal{S}_2$. } For $\mathcal{S}_2$, we rewrite it as
	\beno\begin{aligned}
		\mathcal{S}_2 &= \Bigl(\Lam^s\bigl(\sqrt{h} \nabla \nabla \cdot (h^2 \nabla \eta)\bigr)\,\Big|\,\Lam^s\vv V \Bigr)_{L^2} -\Bigl(  \Lam^s\bigl(h^2 \nabla \nabla \cdot (\sqrt{h} \vv V)\bigr) \,\Big|\,\Lam^s\na\eta \Bigr)_{L^2}  \\
&=\bigl([\Lam^s,\sqrt{h}] \nabla \nabla \cdot (h^2 \nabla \eta)\,\big|\,\Lam^s\vv V  \bigr)_{L^2} -\bigl([\Lam^s,h^2]\nabla \nabla \cdot (\sqrt{h} \vv V) \,|\,\Lam^s\na\eta \bigr)_{L^2}\\
		& \quad \,-\bigl(\Lam^s(h^2 \nabla \eta)\,|\, \nabla \nabla \cdot ([\Lam^s,\sqrt{h}] \vv V)\bigr)_{L^2} +\bigl( \Lam^s\nabla \nabla \cdot (\sqrt{h} \vv V)\,|\, [\Lam^s, h^2]\nabla \eta  \bigr)_{L^2}\\
		&\eqdefa\sum_{j=1}^4\cS_{2j}.
	\end{aligned}\eeno

	{\bf For $\cS_{21}$}, using integration by parts, we have
	\begin{align*}
		\mathcal{S}_{21} &= -\bigl( [\Lam^s,\nabla (\sqrt{h})] \nabla \cdot (h^2 \nabla \eta)\,|\, \Lam^s \vv V  \bigr)_{L^2} - \bigl( [\Lam^s,\sqrt{h}] \nabla \cdot (h^2 \nabla \eta)\,|\,\Lam^s \nabla \cdot \vv V  \bigr)_{L^2} 
	\end{align*}
	which along with \eqref{commutator estimate} implies
	\beno\begin{aligned}
|\mathcal{S}_{21}|&\lesssim\|\nabla^2 (\sqrt{h})\|_{H^{s-1}}\|\nabla \cdot (h^2 \nabla \eta)\|_{H^{s-1}} \|\vv V\|_{H^s}+\|\nabla \sqrt{h}\|_{H^{s-1}} \|\na\cdot(h^2 \nabla \eta)\|_{H^{s-1}} \|\nabla \cdot \vv V\|_{H^s}\\
&\lesssim \|\nabla \sqrt{h}\|_{H^s}   \|h^2\nabla \eta\|_{H^s}( \norm{\vv V}{H^s} + \norm{\nabla \cdot \vv V}{H^s} ).
	\end{aligned}\eeno

	By virtue of  \eqref{bounds of grad composite} with $F(r)=\sqrt{r}$ and \eqref{composite multiplier bounds} with $F(r)=r^2$, we get by using the assumption $\|\na h\|_{H^{s+2}}\leq C\e\leq c_0\leq 1$ that
	\beq\label{E 1}
\|\nabla \sqrt{h}\|_{H^{s+1}}\leq\|\nabla \sqrt{h}\|_{H^{[s]+2}}\lesssim\|\na h\|_{H^{s+2}}\lesssim\e,\quad
\|h^2\nabla \eta\|_{H^s}\lesssim\|\na\eta\|_{H^s}.
\eeq
Then we obtain
\beq\label{estimate of S21}
|\mathcal{S}_{21}|\lesssim\e\|\na\eta\|_{H^s}( \norm{\vv V}{H^s} + \norm{\nabla \cdot \vv V}{H^s} )\lesssim\|\eta\|_{X^s_\e}\|\vv V\|_{ \textbf{X}^s_\e}.
\eeq

{\bf For $\mathcal{S}_{23}$}, we rewrite as 
\beno
\mathcal{S}_{23}=-\bigl(\Lam^s(h^2 \nabla \eta)\,|\, \nabla  ([\Lam^s,\sqrt{h}] \nabla \cdot\vv V)\bigr)_{L^2}-\bigl(\Lam^s(h^2 \nabla \eta)\,|\, \nabla ([\Lam^s,\na\sqrt{h}]\cdot\vv V)\bigr)_{L^2},
\eeno
which together with \eqref{commutator estimate}, \eqref{composite commutator} and the assumption $\|\na h\|_{H^{s+2}}\leq C\e\leq c_0\leq 1$ gives rise to
\beno
|\cS_{23}|\lesssim\bigl(\|\na h\|_{H^{s+1}}\|\na\cdot\vv V\|_{H^s}+\|\na^2\sqrt h\|_{H^s}\|\vv V\|_{H^s}\bigr)\|h^2\na\eta\|_{H^s}.
\eeno
Using \eqref{E 1}, we obtain
\beq\label{estimate of S23}
|\mathcal{S}_{23}|\lesssim\e\|\na\eta\|_{H^s}( \norm{\vv V}{H^s} + \norm{\nabla \cdot \vv V}{H^s} )\lesssim\|\eta\|_{X^s_\e}\|\vv V\|_{ \textbf{X}^s_\e}.
\eeq

{\bf For $\mathcal{S}_{22}$ and $\mathcal{S}_{24}$,}  due to \eqref{composite commutator} and the assumption $\|\na h\|_{H^{s+2}}\leq C\e\leq c_0\leq 1$, we get
\beno
|\mathcal{S}_{22}|+|\mathcal{S}_{24}|
\lesssim\|\na h\|_{H^{s+1}}\| \nabla \cdot (\sqrt{h} \vv V)\|_{H^{s}}\|\na\eta\|_{H^s}\lesssim\e\bigl(\|\na\sqrt h\|_{H^s}\|\vv V\|_{H^s}+\|\sqrt h\na\cdot\vv V\|_{H^s}\bigr)\|\na\eta\|_{H^s},
\eeno
which along with \eqref{E 1} and \eqref{composition} yields to
\beq\label{estimate of S22 and S24}
|\mathcal{S}_{22}|+|\mathcal{S}_{24}|\lesssim\e\|\na\eta\|_{H^s}( \norm{\vv V}{H^s} + \norm{\nabla \cdot \vv V}{H^s} )\lesssim\|\eta\|_{X^s_\e}\|\vv V\|_{\vv X^s_\e}.
\eeq

Thanks to \eqref{estimate of S21},\eqref{estimate of S23} and \eqref{estimate of S22 and S24}, we deduce 
\beq\label{estimate on S2}
|\mathcal{S}_2|\lesssim\|\eta\|_{X^s_\e}\|\vv V\|_{\vv X^s_\e}\lesssim \cE_s(t).
\eeq

	\textit{Step 2.3. Estimate of $\mathcal{S}_3$.} Using the expression of $\mathcal{S}_3$, we have
	\begin{align*}
		\mathcal{S}_3 &=  \bigl\{\bigl(  [\Lam^s,\nabla h]  \nabla \cdot (h \sqrt{h} \nabla \eta)\,|\, \Lam^s \vv V  \bigr)_{L^2} 
		 +  \bigl([\Lambda^s, h \sqrt{h}]\na(\na h\cdot\vv V)\,|\, \Lam^s\na\eta) \bigr)_{L^2}\bigr\} \\
		& \qquad-\bigl\{\bigl(  \Lam^s  ( h \sqrt{h} \nabla \eta) \,|\, \na(\na h\cdot\Lambda^s \vv V )\bigr)_{L^2} -\bigl(\Lam^s\na(\na h \cdot \vv V) \,|\,h\sqrt h\Lam^s \nabla \eta  \bigr)_{L^2}\bigr\} \eqdefa \mathcal{S}_{31} + \mathcal{S}_{32}.
	\end{align*}

	For $\cS_{31}$, by virtue of \eqref{commutator estimate} and \eqref{composite commutator},  we have
	\beno
		|\mathcal{S}_{31}| \lesssim \|\nabla ^2 h\|_{H^{s-1}} \|\na(h\sqrt h \nabla \eta)\|_{H^{s-1}} \|\vv V\|_{H^s} + \|\na h\|_{H^s}\|\na(\na h\cdot\vv V)\|_{H^{s-1}} \|\na\eta\|_{H^s}
\eeno
	which along with \eqref{composite multiplier bounds} and the assumption $\|\na h\|_{H^{s+2}}\leq C\e\leq c_0\leq 1$ yields to
	\beq\label{estimate of S31}
|\mathcal{S}_{31}|\lesssim\|\na h\|_{H^s}
\|\vv V\|_{H^s}\|\na\eta\|_{H^s}\lesssim\e\|\vv V\|_{H^s}\|\na\eta\|_{H^s}
\lesssim\|\vv V\|_{H^s}\|\eta\|_{X^s_\e}.
	\eeq

For $\mathcal{S}_{32}$,  we rewrite it as
	\beno\begin{aligned}
	\mathcal{S}_{32} & = -\bigl( [\Lam^s, h \sqrt{h}] \nabla \eta \,|\, \na(\na h\cdot\Lambda^s \vv V )\bigr)_{L^2}+\bigl(\na([\Lam^s,\na h]\cdot\vv V) \,|\,h\sqrt h\Lam^s \nabla \eta  \bigr)_{L^2}\\
	&  = \bigl( \nabla \cdot ([\Lam^s, h \sqrt{h}] \nabla \eta )\,|\,\na h\cdot\Lambda^s \vv V \bigr)_{L^2}+\bigl(\na([\Lam^s,\na h]\cdot\vv V) \,|\,h\sqrt h\Lam^s \nabla \eta  \bigr)_{L^2}.
	\end{aligned}\eeno
	Thanks to \eqref{commutator estimate}, \eqref{composite commutator},\eqref{composite multiplier bounds} and the assumption \eqref{condition for h}, we get
	\beno\begin{aligned}
		|\mathcal{S}_{32}| &\lesssim\|\nabla h\|_{H^{s+1}} \|\nabla \eta\|_{H^{s}} \|\vv V\|_{H^{s}} + \|\nabla h\|_{H^{s+1}} \|\vv V\|_{H^{s}} \|\Lam^s\nabla\eta\|_{L^2}\\
	 &
	 \lesssim\e \|\eta\|_{H^{s+1}}\|\vv V\|_{H^s}\lesssim \|\eta\|_{X^s_\e}\|\vv V\|_{H^s},
	\end{aligned}\eeno
	which along with \eqref{estimate of S31}  implies
	\beq\label{estimate of S3}
|S_3|\lesssim\|\vv V\|_{H^s}\|\eta\|_{X^s_\e}\lesssim \cE_s(t).
	\eeq

	{\it Step 2.4. Estimate of $\cS$.} Due to \eqref{estimate of S1},\eqref{estimate on S2} and \eqref{estimate of S3}, we obtain
	\begin{align}\label{estimate of S}
		|\mathcal{S}| \le |\mathcal{S}_1| + \e|\mathcal{S}_2| + \e|\mathcal{S}_3| \lesssim \e \cE_s(t). 
	\end{align}

	\begin{remark}
	We remark that each term in $\cS_{32}$ cannot be controlled by $\cE_s(t)$ due to the absence of information regarding $\sqrt\e\|\na\vv V\|_{H^s}$ in $\cE_s(t)$. Nevertheless, there exists a cancellation between the two terms in $\cS_{32}$. This is why  we require $b_3=-c_3$ in the two dimensional case.
	\end{remark}

{\bf Step 3. Estimate of $\mathcal{T}$.}
By the expressions of $\vv F_h(\vv V,\eta)$ and $f_h(\vv V,\eta)$, we rewrite $\mathcal{T}$ as
	\begin{align*}
		\mathcal{T} = & -\f\e2\left\{   \left( \frac{1}{\sqrt{h}} \eta\nabla \eta \,\Big|\, \vv V  \right)_{H^s} + \left( \frac{1}{\sqrt{h}}  \nabla \cdot (\eta \vv V)\,\Big|\, \eta \right)_{H^s}\right\} \\
		& -\f\e2 \left( \frac{1}{\sqrt{h}} (   \f12 \nabla |\vv V|^2 + \vv V \cdot \nabla \vv V + \vv V \nabla \cdot \vv V  )  \,\Big|\, \vv V \right)_{H^s} \\
		& - \f\e2 \left\{  \left( \frac{1}{h^\f32} (  \f12 (\nabla h\cdot \vv V) \vv V - |\vv V|^2 \nabla h  ) \,\Big|\, \vv V  \right)_{H^s} - \left(  \frac{\eta}{2 h^\f32} \nabla h\cdot \vv V \,\Big|\, \eta  \right)_{H^s}   \right\}\\
		\eqdefa&-\f\e2\cT_1-\f\e2\cT_2-\f\e2\cT_3
	\end{align*}

	\textit{Step 3.1. Estimate of $\mathcal{T}_1$.} A direct calculation leads to
	\begin{align*}
		\mathcal{T}_{1} &=  \left( \Lam^s (\frac{\eta}{\sqrt{h}}\nabla \eta )   \,\Big|\, \Lam^s \vv V \right)_{L^2} +\left( \Lam^s  ( \frac{\eta}{\sqrt{h}} \nabla \cdot \vv V )  \,\Big|\, \Lam^s \eta  \right)_{L^2}
		+\left( \Lam^s  ( \frac{\vv V}{\sqrt{h}}\cdot \nabla\eta)  \,\Big|\, \Lam^s \eta  \right)_{L^2}\\
		&= \left\{\left( [\Lam^s,\frac{\eta}{\sqrt{h}}]  \nabla \eta    \,\Big|\, \Lam^s \vv V \right)_{L^2} +\left( [\Lam^s,\frac{\eta}{\sqrt{h}}] \nabla\cdot\vv V    \,\Big|\, \Lam^s \eta \right)_{L^2}
		+\left( [\Lam^s, \frac{\vv V}{\sqrt{h}}]\cdot \nabla\eta \,\Big|\, \Lam^s \eta  \right)_{L^2}\right\}\\
		&\qquad+\left\{-\left( \na(\frac{\eta}{\sqrt{h}})\Lam^s\eta \,\Big|\,\Lam^s\vv V\right)_{L^2} +\left( \frac{\vv V}{\sqrt{h}}\cdot\nabla\Lam^s \eta  \,\Big|\, \Lam^s \eta  \right)_{L^2}\right\}\\
		&\eqdefa \cT_{11}+\cT_{12}.
	\end{align*}
	For $\cT_{11}$, applying \eqref{commutator estimate} and then using \eqref{composite multiplier bounds} and the assumption \eqref{condition for h}, we have
	\beq\label{estimate of T11}
|\cT_{11}|\lesssim\Big\|\na(\f{\eta}{\sqrt h})\Big\|_{H^{s-1}}\|\eta\|_{H^s}\|\vv V\|_{H^s}+\Big\|\na(\f{\vv V}{\sqrt h})\Big\|_{H^{s-1}}\|\eta\|_{H^s}^2
\lesssim \|\vv V\|_{H^s}\|\eta\|_{H^s}^2.
\eeq

While for last term of $\cT_{12}$, integration  by parts yields to
\beno
\left( \frac{\vv V}{\sqrt{h}}\cdot\nabla\Lam^s \eta  \,\Big|\, \Lam^s \eta  \right)_{L^2}=-\f12\left(\na\cdot \frac{\vv V}{\sqrt{h}}\Lam^s \eta  \,\Big|\, \Lam^s \eta  \right)_{L^2}.
\eeno 
Then using the Sobolev's embedding $H^{s-1}(\R^n)\hookrightarrow L^\infty(\R^n)$ for $s>\f{n}{2}+1$, \eqref{composite multiplier bounds} and the assumption \eqref{condition for h}, we obtain
\beno
|\mathcal{T}_{12}|\lesssim\Big\|\na(\f{\eta}{\sqrt h})\Big\|_{H^{s-1}}\|\eta\|_{H^s}\|\vv V\|_{H^s}+\Big\|\na\cdot(\f{\vv V}{\sqrt h})\Big\|_{H^{s-1}}\|\eta\|_{H^s}^2
\lesssim\|\vv V\|_{H^s}\|\eta\|_{H^s}^2,
	\eeno
	which along with \eqref{estimate of T11} leads to
	\beq\label{estimate of T1}
|\mathcal{T}_{1}|\lesssim\|\vv V\|_{H^s}\|\eta\|_{H^s}^2\lesssim \cE_s(t)^{\f32}.
	\eeq

	\textit{Step 3.2. Estimate of $\mathcal{T}_2$.} For the first term of $\cT_2$,  denoting by ${\vv V}=(V_1,V_2)^T$, we have
	\beno\begin{aligned}
&\quad\left( \frac{1}{2\sqrt{h}} \nabla |\vv V|^2  \,\Big|\, \vv V \right)_{H^s}
=\sum_{j,k=1}^2\left(\Lam^s(\frac{1}{\sqrt{h}} V_j\p_kV_j) \,\Big|\,\Lam^sV_k \right)_{L^2}\\
&=\sum_{j,k=1}^2\Bigl\{\left([\Lam^s,\frac{V_j}{\sqrt{h}}]\p_kV_j) \,\Big|\,\Lam^sV_k \right)_{L^2}+\left(\p_k\Lam^sV_j \,\Big|\,\frac{V_j}{\sqrt{h}}\Lam^sV_k \right)_{L^2}\Bigr\}\\
&=\sum_{j,k=1}^2\Bigl\{\left([\Lam^s,\frac{V_j}{\sqrt{h}}]\p_kV_j) \,\Big|\,\Lam^sV_k \right)_{L^2}-\left(\Lam^sV_j \,\Big|\,\p_k\bigl(\frac{V_j}{\sqrt{h}}\bigr)\Lam^sV_k \right)_{L^2}\Bigr\}-\left(\Lam^s\vv V \,\Big|\,\frac{\vv V}{\sqrt{h}}\Lam^s\na\cdot\vv V \right)_{L^2}\\
&\eqdefa \cT_{21}-\left(\Lam^s\vv V \,\Big|\,\frac{\vv V}{\sqrt{h}}\Lam^s\na\cdot\vv V \right)_{L^2}.
	\end{aligned}\eeno
Then we get
\begin{align*}
\mathcal{T}_{2}&=\cT_{21}-\left(\Lam^s\vv V \,\Big|\,\frac{\vv V}{\sqrt{h}}\Lam^s\na\cdot\vv V \right)_{L^2}+\left(\Lam^s\bigl(\f{\vv V}{\sqrt h}\na\cdot\vv V\bigr) \,\Big|\,\Lam^s\vv V\bigr) \right)_{L^2}+\left(\Lam^s\bigl(\f{\vv V}{\sqrt h}\cdot\na\vv V\bigr) \,\Big|\,\Lam^s\vv V\bigr) \right)_{L^2}\\
&=\cT_{21}+\left([\Lam^s,\f{\vv V}{\sqrt h}]\na\cdot\vv V \,\Big|\,\Lam^s\vv V\bigr) \right)_{L^2}+\left([\Lam^s,\f{\vv V}{\sqrt h}]\cdot\na\vv V \,\Big|\,\Lam^s\vv V\bigr) \right)_{L^2}-\f12\left(\na\cdot\bigl(\f{\vv V}{\sqrt h}\bigr)\Lam^s\vv V\,\Big|\,\Lam^s\vv V\bigr) \right)_{L^2}
	\end{align*}

Similar derivation as \eqref{estimate of T1} yields to
\beq\label{estimate on T2}
|\mathcal{T}_{2}|\lesssim\Big\|\na(\f{\vv V}{\sqrt h})\Big\|_{H^{s-1}}\|\vv V\|_{H^s}^2
\lesssim\|\vv V\|_{H^s}^3\lesssim \cE_s(t)^{\f32}.
\eeq
	
	\textit{Step 3.3. Estimate of $\mathcal{T}_3$.} The classical tame estimate and \eqref{composite multiplier bounds} give rise to 
	\begin{align}\label{estimate of T3}
		|\mathcal{T}_{3}| \lesssim \Big\|\f{\na h}{h^{\f32}}\Big\|_{H^s}\|\vv V\|_{H^s}\bigl(\|\vv V\|_{H^s}^2 + \|\eta\|_{H^s}^2 \bigr) \lesssim \cE_s(t)^\f32. 
	\end{align}

{\it Step 3.4. Estimate of $\cT$.} Due to \eqref{estimate of T1}-\eqref{estimate of T3}, we get
	\begin{align}\label{estimate of T}
		|\mathcal{T}| \lesssim \e \cE_s(t)^\f32. 
	\end{align}
	
{\bf Step 4. Estimate of $\mathcal{N}$.} We estimate $\mathcal{N}_j\,(j=1,2)$ one by one.

\textit{Step 4.1. Estimate of $\mathcal{N}_1$.} Recall the definitions of  operators $\mathcal{P}_h^1$ and $\mathcal{P}_h^2$ in \eqref{def of Phi, d>1}, it follows that
	\begin{align*}
		\mathcal{N}_1 &=  a_1 \bigl\{  ( \nabla (h^2 \nabla \cdot \vv V_t) \,|\, \vv V   )_{H^s} - ( \nabla (h^2 \nabla \cdot \vv V) \,|\, \vv V_t   )_{H^s}   \bigr\}\\
		 &\quad + d_1 \bigl\{  (  \nabla \cdot (h^2 \nabla\eta_t) \,|\, \eta )_{H^s} -  (  \nabla \cdot (h^2 \nabla\eta) \,|\, \eta_t )_{H^s}    \bigr\} \\
		 &\quad + d_2 \bigl\{  ( \nabla \cdot (h\nabla h \,\eta_t )\,|\, \eta )_{H^s} - ( \nabla \cdot (h\nabla h \, \eta )\,|\, \eta_t )_{H^s}    \bigr\} \\
		 &\eqdefa a_1 \mathcal{N}_{11} + d_1\mathcal{N}_{12}  + d_2\mathcal{N}_{13} .
	\end{align*}

	For term $\mathcal{N}_{11}$, we have
	\begin{align*}
		\mathcal{N}_{11} &= - \bigl(\Lam^s( h^2 \nabla \cdot \vv V_t)\,|\, \Lam^s\nabla \cdot \vv V )_{L^2}+ \bigl(\Lam^s(h^2 \nabla \cdot \vv V)\,\big|\, \Lam^s\nabla \cdot \vv V_t \bigr)_{L^2}     \\
		& = -([\Lam^s,h^2 ] \nabla \cdot \vv V_t \,|\, \Lam^s \nabla \cdot\vv V  )_{L^2}+ \bigl( \Lam([\Lam^s ,h^2] \nabla \cdot \vv V)\,|\, \Lam^{s-1} \nabla \cdot \vv V_t \bigr)_{L^2} .
	\end{align*}
	which along with \eqref{composite commutator} and \eqref{condition for h}  yields to
	\begin{align}\label{estimate of N11}
		|\mathcal{N}_{11}| &\lesssim \norm{\nabla h}{H^{s+1}}\norm{\nabla \cdot \vv V_t}{H^{s-1}}\norm{\nabla \cdot \vv V}{H^s}
	 \lesssim \e \norm{\nabla \cdot \vv V_t}{H^{s-1}}\norm{\nabla \cdot \vv V}{H^s}  \lesssim \norm{\vv V_t}{{\bf X}_\e^{s-1}}\norm{\vv V}{\bf{X}^s_\e} .
	\end{align}
A similar argument gives rise to
	\begin{equation}\label{estimate of N12}
		|\mathcal{N}_{12}| \lesssim \|\na h\|_{H^{s+1}}\norm{\nabla  \eta_t}{H^{s-1}} \norm{\nabla \eta}{H^s}  \lesssim \e\norm{\nabla  \eta_t}{H^{s-1}}\norm{\nabla \eta}{H^s} \lesssim \norm{\eta_t}{X_\e^{s-1}} \|\eta\|_{X_\e^s} . 
	\end{equation}

	For $\mathcal{N}_{13}$, due to the tame estimate, \eqref{composite multiplier bounds} and the assumption \eqref{condition for h}, we have
    \beno\begin{aligned}
        |\mathcal{N}_{13}|  &\lesssim\norm{h\nabla h\,  \eta_t}{H^s} \norm{\nabla \eta}{H^s} + \norm{\nabla \cdot ( h\nabla h\,\eta )}{H^s} \norm{\eta_t}{H^s}\\
        & \lesssim \|\na h\|_{H^{s+1}}\norm{\eta_t}{H^s}  \norm{\eta}{H^{s+1}}  
        \lesssim \e\norm{\eta_t}{H^s}\norm{\eta}{H^{s+1}}  \lesssim \norm{\eta_t}{X_\e^{s-1}} \|\eta\|_{X_\e^s},
    \end{aligned}\eeno
    which together with  \eqref{estimate of N11} and \eqref{estimate of N12} hints
    \begin{align}\label{estimate of N1a}
    	|\mathcal{N}_1| \lesssim(  \norm{\vv V_t(t,\cdot) }{{\bf X}_\e^{s-1}} + \norm{\eta_t(t,\cdot)}{X_\e^{s-1}} )\cE_s(t)^\f12. 
    \end{align}
    
\textit{Step 4.2. Estimate of $(\vv V_t,\eta_t)$.} We shall use the equations in  \eqref{Boussinesq-like case 1} to derive the bounds of $(\vv V_t,\eta_t)$.
    
    \textit{Step 4.2.1. Estimate of $\norm{ \vv V_t}{{\bf X}_\e^{s-1}}$.} Thanks to \eqref{equivalent functional 1}, we have
    \beno
\norm{\vv V_t(t,\cdot)}{{\bf X}_\e^{s-1}}^2 \sim ( (1-\f\e2 \mathcal{P}_h^1) \vv V_t\,|\, \vv V_t  )_{H^{s-1}},
    \eeno 
from which and the first equation of \eqref{Boussinesq-like case 1}, we deduce that
   \beno \begin{aligned}
    \norm{\vv V_t(t)}{{\bf X}_\e^{s-1}}^2 &\lesssim \bigl(
   \|\sqrt{h}\nabla \eta\|_{H^{s-1}}+ \e\|\vv F_h(\vv V,\eta)\|_{H^{s-1}} 
    +\e\|\sqrt{h} \nabla (h\nabla h\cdot \nabla \eta)\|_{H^{s-1}} +\e\|\nabla h\nabla \cdot (h^\f32 \nabla \eta)\|_{H^{s-1}}\\
    &\qquad+\e\|\sqrt{h} \nabla h(\nabla h\cdot \nabla \eta) \|_{H^{s-1}}\bigr)\cdot\|\vv V_t\|_{H^{s-1}} + \e |\bigl( \sqrt{h} \nabla\nabla \cdot (h^2 \nabla \eta)\,|\, \vv V_t  \bigr)_{H^{s-1}}|\\
    &\eqdefa A_1+A_2. 
    \end{aligned}\eeno
    We remark here that $A_1$ involves the lower order terms, while $A_2$ pertains to the highest order term in the first equation of \eqref{Boussinesq-like case 1}.

{\it (i) For $A_1$,} using the tame estimate \eqref{composite multiplier bounds} and the assumption \eqref{condition for h}, we get
    \beno
A_1\lesssim\bigl(\|\eta\|_{H^s}+\e\|\na\eta\|_{H^s}+ \e\|\vv F_h(\vv V,\eta)\|_{H^{s-1}} \bigr)\cdot\|\vv V_t\|_{H^{s-1}}
\eeno
    Recalling the expression of $\vv F_h(\vv V,\eta)$ in \eqref{def of Fh and fh},  by using \eqref{composite multiplier bounds} and the assumption \eqref{condition for h}, we obtain 
    \begin{align}
    	\norm{\vv F_h(\vv V,\eta)}{H^{s-1}} &\lesssim  \norm{\eta \nabla \eta}{H^{s-1}} + \norm{\nabla |\vv V|^2}{H^{s-1}} + \norm{\vv V\cdot \nabla \vv V}{H^{s-1}} + \norm{\vv V\nabla \cdot \vv V}{H^{s-1}} + \norm{\vv V}{H^{s-1}}^2\notag \\
    	& \lesssim \norm{\eta}{H^s}^2 + \norm{\vv V}{H^s}^2 \lesssim \cE_s(t), \notag
    \end{align}
    which implies 
    \begin{align}\label{estimate of A1}
    	A_1\lesssim\bigl(\|\eta\|_{X^s_\e}+\e\norm{\eta}{H^s}^2+\e\norm{\vv V}{H^s}^2 \bigr) \norm{\vv V_t}{H^{s-1}} \lesssim\bigl(\cE_s(t)^{\f12}+\e \cE_s(t)\bigr) \norm{\vv V_t}{{\bf X}_\e^{s-1}}.  
    \end{align}

  {\it (ii) For $A_2$,} using integration by part, we have 
    \beno
\bigl( \sqrt{h} \nabla\nabla \cdot (h^2 \nabla \eta)\,|\, \vv V_t  \bigr)_{H^{s-1}}&=-\bigl(\sqrt{h}\nabla \cdot (h^2 \nabla \eta)\,|\, \na\cdot\vv V_t  \bigr)_{H^{s-1}}
-\bigl(\na\sqrt{h}\nabla \cdot (h^2 \nabla \eta)\,|\,\vv V_t  \bigr)_{H^{s-1}}.
\eeno
Due to the tame estimate, \eqref{composite multiplier bounds} and the assumption \eqref{condition for h}, we get
\beno\begin{aligned}
A_2&\lesssim\e\|\nabla \cdot (h^2 \nabla \eta)\|_{H^{s-1}}\|\na\cdot\vv V_t\|_{H^{s-1}}
+\e\|\na\sqrt{h}\|_{H^{s-1}}\|\nabla \cdot (h^2 \nabla \eta)\|_{H^{s-1}}\|\vv V_t\|_{H^{s-1}}\\
&\lesssim\e^{\f12}\|\na\eta\|_{H^s}\cdot\e^{\f12}\bigl(\|\na\cdot\vv V_t\|_{H^{s-1}}+\|\vv V_t\|_{H^{s-1}}\bigr)\lesssim\|\eta\|_{X_\e^s}\|\vv V_t\|_{\bf{X}_\e^{s-1}}.
\end{aligned}\eeno
Then  there holds
\beq\label{estimate of A2}
A_2\lesssim \cE_s(t)^{\f12}\|\vv V_t\|_{\bf{X}_\e^{s-1}}.
\eeq

Thanks to \eqref{estimate of A1}  and \eqref{estimate of A2}, we obtain
    \begin{align*}
    	\norm{\vv V_t(t,\cdot)}{{\bf X}_\e^{s-1}}^2\lesssim A_1+A_2\lesssim\bigl(\cE_s(t)^\f12 + \e \cE_s(t)\bigr)\norm{\vv V_t(t,\cdot)}{{\bf X}_\e^{s-1}},
    \end{align*}
    which hints
 \beq\label{estimate of Vt}
\norm{\vv V_t(t,\cdot)}{{\bf X}_\e^{s-1}}\lesssim \cE_s(t)^\f12 + \e \cE_s(t).
 \eeq
    
   \textit{Step 4.2.2. Estimate of $\norm{\eta_t}{X_\e^{s-1}}$.} Due to \eqref{equivalent functional 1}, we have
    \beno
\norm{\eta_t(t,\cdot)}{X_\e^{s-1}}^2 &\sim \bigl( (1-\f\e2 \mathcal{P}_h^2) \eta_t\,| \, \eta_t \bigr)_{H^{s-1}}
    \eeno
    from which and the second equation of \eqref{Boussinesq-like case 1}, we derive that 
\beno\begin{aligned}
\norm{\eta_t(t,\cdot)}{X_\e^{s-1}}^2&\lesssim
\Bigl(\|\nabla \cdot (\sqrt{h} \vv V)\|_{H^{s-1}} +\e\|f_h(\vv V,\eta)\|_{H^{s-1}}\Bigr)\|\eta_t\|_{H^{s-1}}\\
&\qquad+\e\Bigl(\|h^2 \nabla \nabla \cdot (\sqrt{h} \vv V)\|_{H^{s-1}} +\|h \nabla h\nabla \cdot (\sqrt{h}\vv V)\|_{H^{s-1}} \\
&\qquad+\| h\sqrt{h} \nabla (\nabla h\cdot \vv V) \|_{H^{s-1}}+\| \sqrt{h} \nabla h (\nabla h\cdot \vv V) \|_{H^{s-1}} \Bigr)\|\na\eta_t\|_{H^{s-1}}\\
&\eqdefa B_1+B_2.
\end{aligned}\eeno
We remark here that the terms in $B_2$ are obtain by applying  integration by parts to the corresponding terms in $\bigl( (1-\f\e2 \mathcal{P}_h^2) \eta_t\,| \, \eta_t \bigr)_{H^{s-1}}$.

{\it (i)  For $B_1$,} by virtue of the tame estimate,\eqref{composite multiplier bounds} and the assumption \eqref{condition for h}, we get
\beno
B_1\lesssim\bigl(\|\vv V\|_{H^s}+\e\|f_h(\vv V,\eta)\|_{H^{s-1}}\bigr)\|\eta_t\|_{H^{s-1}}.
\eeno
Due to the expression of $f_h(\vv V,\eta)$ in \eqref{def of Fh and fh}, by using \eqref{composite multiplier bounds} and the assumption \eqref{condition for h}, we obtain
\beno
\|f_h(\vv V,\eta)\|_{H^{s-1}}\lesssim \| \nabla \cdot (\eta \vv V)\|_{H^{s-1}} +\|\eta\vv V \|_{H^{s-1}}\lesssim\|\eta\|_{H^s}\|\vv V\|_{H^s},
\eeno 
which implies
\beq\label{estimate of B1}
B_1\lesssim\bigl(\|\vv V\|_{H^s}+\e\|\eta\|_{H^s}\|\vv V\|_{H^s}\bigr)\|\eta_t\|_{H^{s-1}}\lesssim\bigl(\cE_s(t)^{\f12}+\e \cE_s(t)\bigr)\|\eta_t\|_{H^{s-1}}.
\eeq

{\it (ii) For $B_2$,} since
\beno
 \nabla \nabla \cdot (\sqrt{h} \vv V)=\na(\na\sqrt h\cdot\vv V)+\na(\sqrt h\na\cdot\vv V),
\eeno
using \eqref{composite multiplier bounds}, the tame estimate and the assumption \eqref{condition for h},  we deduce that
\beno
\|h^2 \nabla \nabla \cdot (\sqrt{h} \vv V)\|_{H^{s-1}}
\lesssim\|\na\sqrt h\cdot\vv V\|_{H^s}+\|\sqrt h\na\cdot\vv V\|_{H^s}
\lesssim\|\vv V\|_{H^s}+\|\na\cdot\vv V\|_{H^s}.
\eeno
Similarly, there holds
\beno\begin{aligned}
&\|h \nabla h\nabla \cdot (\sqrt{h}\vv V)\|_{H^{s-1}} +\| h\sqrt{h} \nabla (\nabla h\cdot \vv V) \|_{H^{s-1}}+\| \sqrt{h} \nabla h (\nabla h\cdot \vv V) \|_{H^{s-1}}\\
&\quad
\lesssim\|\vv V\|_{H^{s-1}}+\|\na\cdot\vv V\|_{H^{s-1}}\lesssim\|\vv V\|_{H^s}.
\end{aligned}\eeno
Then we obtain
\beq\label{estimate of B2}
B_2\lesssim\e^{\f12}\bigl(\|\vv V\|_{H^s}+\|\na\cdot\vv V\|_{H^s}\bigr)\cdot\e^{\f12}\|\na\eta_t\|_{H^{s-1}}\lesssim\|\vv V\|_{{\bf X}_\e^s}\|\eta_t\|_{X_\e^{s-1}}
\lesssim \cE_s(t)^{\f12}\|\eta_t\|_{X_\e^{s-1}}.
\eeq
   
   Hence, by virtue of \eqref{estimate of B1} and \eqref{estimate of B2}, we derive that 
   \beno
\|\eta_t\|_{X_\e^{s-1}}^2\lesssim B_1+B_2\lesssim\bigl(\cE_s(t)^{\f12}+\e\cE_s(t)\bigr)\|\eta_t\|_{X_\e^{s-1}}
   \eeno
   which gives rise to
    \beq\label{estimate on etat}
    	\|\eta_t\|_{X_\e^{s-1}}\lesssim \cE_s(t)^{\f12}+\e\cE_s(t).
    \eeq
    
   {\it Step 4.2.3. Estimates of $(\vv V_t,\eta_t)$ and $\cN_1$.} Combining \eqref{estimate of Vt} and \eqref{estimate on etat}, we obtain 
   \beq\label{estimate of Vt and etat}
\|\vv V_t\|_{{\bf X}_\e^{s-1}}+\|\eta_t\|_{X_\e^{s-1}}\lesssim \cE_s(t)^{\f12}+\e \cE_s(t).
   \eeq

Using \eqref{estimate of Vt and etat}, we deduce from \eqref{estimate of N1a} that
    \beq\label{estimate on N1}
    |\mathcal{N}_1| \lesssim \cE_s(t) (1+ \e\cE_s(t)^\f12  ). 
    \eeq
    
 \noindent\textit{Step 4.3. Estimate of $\mathcal{N}_2$.} For the first term of $\cN_2$, we get by using integration by parts that
 \beno
\bigl(\sqrt{h} \nabla ( h\nabla h\cdot \nabla \eta) \,|\,\vv V\bigr)_{H^s}
=-\bigl(\sqrt{h} h\nabla h\cdot \nabla \eta \,|\,\nabla\cdot\vv V \bigr)_{H^s}
-\bigl(\na\sqrt{h} (h\nabla h\cdot \nabla \eta) \,|\,\vv V \bigr)_{H^s}
 \eeno
 which along with the tame estimate,\eqref{composite multiplier bounds} and the assumption \eqref{condition for h} implies
 \beno\begin{aligned}
\bigl|\bigl(\sqrt{h} \nabla ( h\nabla h\cdot \nabla \eta) \,|\,\vv V\bigr)_{H^s}\bigr|
&\lesssim\|\na h\|_{H^s}\|\na\eta\|_{H^s}\bigl(\|\na\cdot\vv V\|_{H^s}+\|\vv V\|_{H^s}\bigr)\\
&\lesssim\e^{\f12} \|\na\eta\|_{H^s}\cdot\e^{\f12}\bigl(\|\na\cdot\vv V\|_{H^s}+\|\vv V\|_{H^s}\bigr)\lesssim\|\eta\|_{X_\e^s}\|\vv V\|_{{\bf X}_\e^s}.
 \end{aligned}\eeno
 The same estimate also holds for the remainder terms of $\cN_2$. Then we obtain
\beq\label{estimate on N2}
    |\mathcal{N}_2| \leq \|\eta\|_{X_\e^s}\|\vv V\|_{{\bf X}_\e^s} \lesssim\cE_s(t).
 \eeq

\textit{Step 4.4. Estimate of $\mathcal{N}$.}
 Thanks to \eqref{estimate on N1} and \eqref{estimate on N2}, we get
    \begin{align}\label{estimate on N}
    |\mathcal{N}| \lesssim\e|\mathcal{N}_1| +\e| \mathcal{N}_2 | \lesssim \e\cE_s(t) ( 1+ \e\cE_s(t)^\f12 ). 
    \end{align}

{\bf Step 5. The final energy estimate.} Combining estimates  \eqref{estimate of S}, \eqref{estimate of T} and \eqref{estimate on N}, we deduce from \eqref{calculation of energy} that
    \begin{align*}
    	\frac{d}{dt} E_s(t) \lesssim \e\cE_s(t) ( 1+ \e\cE_s(t)^\f12 ),\quad \forall t\in [0,\f{T_0}{\e}].
    \end{align*}
Then we have
\beno
E_s(t)\leq E_s(0)+C\e t\max_{\tau\in [0,t]}\cE_s(\tau)\bigl( 1+ \e\cE_s(\tau)^\f12 \bigr),\quad \forall t\in [0,\f{T_0}{\e}],
\eeno
which, along with \eqref{equivalent functional 1}, implies that there exist constants $C_1>1$ and $C_2>1$ such that \eqref{priori estimate for case 1} holds.

    For $n=1$, the proof of \eqref{priori estimate for case 1} is much  simpler since there is no distinction between $\nabla \cdot \vv V$ and $\nabla \vv V$. We omit the details of the proof. Thus, the proof of the proposition is concluded.
	\end{proof}
	\begin{remark}
(1). Thanks to Lemma \ref{technical lemma 1}, we have
	\beno
E_s(t)\sim\|\vv V\|_{{\bf X}_\e^s}^2+\|\eta\|_{X_\e^s}^2,
	\eeno 
under the condition that  $a_1>0$ and $d_1>0$ for the parameters appearing in $\cP_h^1$ and $\cP_h^2$. However, there are no restrictions on the parameters $a_2$ and $d_2$ in $\cP_h^1$ and $\cP_h^2$. The restriction $a_2=0$ in Proposition \ref{priori energy estimate} and Theorem \ref{main results 1} for $n=2$ is to cancel the following uncontrolled terms
\beno
a_2\bigl\{ - ( h\na h\cdot \vv V_t \,|\, \na\cdot\vv V   )_{H^s} + ( h\na h\cdot \vv V \,|\,\na\cdot \vv V_t   )_{H^s}  \bigr\}
\eeno
which are parts of $\left( \mathcal{P}_h^1 \vv V_t \,|\, \vv V  \right)_{H^s} - \left(  \mathcal{P}_h^1\vv V \,|\, \vv V_t \right)_{H^s}$.
These terms arise during the procedure of the \textit{a priori} energy estimate of \eqref{Boussinesq-like case 1}. Indeed,  since there lacks  the information of $\|\vv V_t\|_{H^s}$ for $n=2$, the above terms can not be controlled except when $a_2=0$.

(2). To control the term $\mathcal{S}$, which involves the symmetric linear parts of the system \eqref{Boussinesq-like case 1}, we impose the condition $\nabla h =O(\varepsilon)$ to ensure that $\mathcal{S}$ is of order $O(\e)$. For the term $\e\mathcal{N}_2$, which involves the lower order linear parts of the system \eqref{Boussinesq-like case 1}, the condition  $\nabla h =(\sqrt{\e})$ is sufficient to guarantee that $\e\mathcal{N}_2$ is of order $O(\e)$.

	\end{remark}
	
	\subsection{ The proof of Theorem \ref{main results 1}}
With the {\it a priori } energy estimate in Proposition \ref{priori energy estimate}, we can complete the proof of Theorem \ref{main results 1} in this subsection.

	\begin{proof}[Proof of Theorem \ref{main results 1}]

	Assuming that  $c_0\in(0,1]$ is the constant stated in Lemma \ref{technical lemma 1}, there exists $\e_0\in(0,1)$, such that for any $\e\in (0,\e_0)$,
\beno
\|\na h\|_{H^{s+2}}\leq C\e\leq c_0\leq1.
\eeno
Then Proposition \ref{priori energy estimate} for the  {\it a priori} estimates  of \eqref{Boussinesq-like system} holds. By virtue of the {\it a priori } estimate \eqref{priori estimate for case 1} in Proposition \ref{priori energy estimate}, we obtain a unique solution 
$(\vv V,\eta)\in C\bigl([0,T_0/\e];{\bf X}_\e^s\times X_\e^s\bigr)\cap C^1\bigl([0,T_0/\e];{\bf X}_\e^{s-1}\times X_\e^{s-1}\bigr)$ for \eqref{Boussinesq-like system}with the initial data $(\vv V_0,\eta_0)\in {\bf X}_\e^s\times X_\e^s$, by applying the existence theory for the linear hyperbolic systems and constructing the approximated solution via Picard iteration. We omit the details of the proof of {\it existence} and {\it uniqueness}.

In this proof, we only show how to determine the constant $T_0>0$ and establish the uniform energy estimate \eqref{total energy estimate for case 1}. The proof relies on the continuity argument.

Consider  a sufficiently smooth solution $(\vv V,\eta)$  to \eqref{Boussinesq-like system} -\eqref{initial data} over the time interval $[0,\f{T_0}{\e}]$, subjected to the constraint \eqref{case for slow oscillation}. Due to Proposition \ref{priori energy estimate}, for any $\e\in(0,\e_0)$, there exist $C_1>1$ and $C_2>1$ such that
 \beno
\cE_s(t)
\leq C_1\cE_s(0)+C_2\e t\max_{\tau\in[0,t]}\cE_s(\tau)\bigl(1+\e\cE_s(\tau)^{\f12}\bigr),
\quad\forall\,t\in[0,\f{T_0}{\e}],
\eeno
where $\cE_s(t)=\|\vv V\|_{{\bf X}_\e^s}^2+\|\eta\|_{X_\e^s}^2$ and $\cE_s(0)=\|\vv V_0\|_{{\bf X}_\e^s}^2+\|\eta_0\|_{X_\e^s}^2$.
    
Assuming that    
\beq\label{ansatz for case 1}
\cE_s(t)\leq 4C_1 \cE_s(0),\quad\forall\,t\in[0,\f{T_0}{\e}],
\eeq
we have for any $t\in[0,\f{T_0}{\e}]$,
\beno
\max_{\tau\in[0,t]}\cE_s(\tau)\leq C_1\cE_s(0)+C_2\e t\cdot\bigl(1+2\sqrt{C_1}\cE_s(0)^{\f12}\bigr)\cdot\max_{\tau\in[0,t]}\cE_s(\tau).
\eeno

Taking $T_0=\f{1}{2C_2\bigl(1+2\sqrt{C_1}\cE_s(0)^{\f12}\bigr)}$, we get for any $\e\in(0,\e_0)$,
\beno
\max_{\tau\in[0,t]}\cE_s(\tau)\leq 2C_1\cE_s(0),\quad\forall\,t\in[0,\f{T_0}{\e}].
\eeno
This improves the ansatz \eqref{ansatz for case 1}. The continuity argument is closed.

Thus, under the assumption of Theorem \ref{main results 1}, the Cauchy problem \eqref{Boussinesq-like system} -\eqref{initial data} admits a unique solution $(\vv V,\eta)$ over time interval $[0,\f{T_0}{\e}]$ satisfying the uniform energy estimate \eqref{total energy estimate for case 1}. This proves Theorem \ref{main results 1}.
\end{proof}
      
    \section{Proof of Theorem \ref{main results 2}}

    The goal of this section is to prove Theorem \ref{main results 2}, establishing the long time existence of solutions to the Cauchy problem of \eqref{Boussinesq-like system} ($n=1$) with a fast oscillating bottom, under the condition specified in \eqref{case for fast oscillation}. 
The {\it existence} and {\it uniqueness} of solutions follow from the standard mollification method and the Picard (Cauchy-Lipschitz) existence theorem, analogous to the approach in Section 5 of \cite{SWX}. Here, we employ a continuity argument combined with {\it a priori} energy estimates to establish both the existence time interval and the closure of the energy estimates, thereby completing the proof of Theorem \ref{main results 2}.

    \subsection{Reduction of the system}
    
    When $n=1$ and the parameters $\{ a_i,d_i \}_{i=1,2}, \{ b_i,c_i \}_{1\le i \le 4}$ fulfill \eqref{case for fast oscillation}, i.e.,
    \beno
     a_1>0,\,a_2=0,\,d_1=d_2=0,\,b_1=c_1<0,\,b_2+c_2+b_3+c_3=0,\,(b_4,d_4)\in\R^2,
   \eeno
    there hold
    \beno
    \cP_h^1 = a_1 \p_x ( h^2 \p_x \quad ),\quad \mathcal{P}_h^2 =0.
    \eeno

    By rewriting $F_h(V,\eta)$ and $f_h(V,\eta)$ into
    \begin{align*}
    	 F_h(V,\eta) =  \frac{1}{\sqrt h} \eta \p_x \eta + \frac{3}{\sqrt h} V \p_x V - \frac{\p_x h}{2h\sqrt h}|V|^2 , \quad f_h(V,\eta) = \p_x \bigl(  \frac{\eta V}{\sqrt{h}} \bigr),
    \end{align*}
    the system \eqref{Boussinesq-like system} is reduced to 
    \beq\label{Boussinesq-like, d=1}
    	 \left\{ \begin{aligned}
    	 	 & (1-\f\e2 \cP_h^1  ) \p_t V + \sqrt{h} \p_x \eta + \f\e2 B_h(D) \eta + \frac{\e}{2\sqrt h} \eta \p_x \eta +\frac{3\e }{2\sqrt h} V \p_x V =\frac{\e \p_x h }{4 h^{\f32}} |V|^2 ,\\
    	 	 &\p_t \eta + \p_x( \sqrt{h} V ) + \f\e2 C_h(D) V+\f\e2 \p_x \bigl( \frac{\eta V}{\sqrt h} \bigr)=0,
    	 \end{aligned}  \right.
    \eeq
    where the operators $B_h(D),C_h(D)$ are defined by
    \beno\begin{aligned}
    	&B_h(D) f \eqdefa b_1 \sqrt{h} \p_x^2 (h^2 \p_x f) + (b_2+b_3)h^{\f32}\p_x h \p_x^2f + r_1(h)\p_xf,\\
    	&C_h(D) g \eqdefa c_1 \partial_x \big( h^2 \p_x^2 ( \sqrt{h} g) \big) + (c_2+c_3)\p_x^2 \bigl( h^{\f32}\p_xh g\bigr)-\p_x(r_2(h)g),
    	\end{aligned}\eeno
    	and 
 \beq\label{def of Rh12}
\begin{aligned}
  &r_1(h)\eqdefa(b_2 + \f32 b_3 + b_4) \sqrt h (\p_x h)^2  + b_2 h^\f32 \p_x^2 h, \\
  &r_2(h)\eqdefa(c_2 + \f32 c_3 - c_4) \sqrt h (\p_x h)^2 + c_2 h^\f32 \p_x^2 h .
\end{aligned}
\eeq

Notice that $b_1=c_1$ and $b_2+b_3+c_2+c_3=0$. Then we obtain
\beq\label{def of BC}\begin{aligned}
&B_h(D)f= b_1 \sqrt{h} \p_x^2 (h^2 \p_x f) + (b_2+b_3)h^{\f32}\p_x h \p_x^2f+r_1(h)\p_xf,\\
&C_h(D)g=b_1 \partial_x \big( h^2 \p_x^2 ( \sqrt{h}g) \big) -(b_2+b_3)\p_x^2 \bigl( h^{\f32}\p_xh g\bigr)-\p_x(r_2(h)g).
\end{aligned}\eeq
Moreover, there holds
\beq\label{key 1}
\bigl(B_h(D)f\,|\,g\bigr)_{L^2}+\bigl(C_h(D)g\,|\,f\bigr)_{L^2}=\bigl(r(h)\p_x f\,|\,g\bigr)_{L^2},\quad\forall\,f,g\in\cS(\R),
\eeq
where 
\beq\label{def of Rh}
r(h)=r_1(h)+r_2(h)=(\f12b_3 + \f12 c_3 + b_4-c_4) \sqrt h (\p_x h)^2  + (b_2+c_2) h^\f32 \p_x^2 h.
\eeq

\begin{remark}
(1). Formula \eqref{key 1} reveals that the principal parts of the operators  $B_h(D)$ and $C_h(D)$ possess antisymmetric properties, leading to cancellation in \eqref{key 1}. Despite this favorable cancellation, \eqref{key 1} still induces a loss of derivatives in the highest-order energy estimates  (e.g.,estimates for $(\p_t^2\eta,\p_t^2V)_{L^2}$). We will study this issue more closely during the derivation of the highest-order energy estimates.

(2). The terms $\f\e2 B_h(D) \eta$ and $\f\e2 C_h(D) \eta$ in \eqref{Boussinesq-like, d=1} represent dispersive effects. The operators $B_h(D)$ and $C_h(D)$ behave like $-\p_x^3$, and their essential properties are summarized in the following lemma.
\end{remark}

Before presenting the lemma, we state a useful interpolation inequality as follows:
\beq\label{interpolation}
\e^{\f{j}{2}}\|\p_x^jf\|_{L^2}\lesssim\|f\|_{L^2}^{1-\f{j}{k}}\cdot\bigl( \e^{\f k2} \|\p_x^kf\|_{L^2}\bigr)^{\f{j}{k}}\lesssim\|f\|_{X^0_{\e^k}},\quad 0<j<k,\, j,k\in\Z_{\geq 1}.
\eeq

\begin{lemma}\label{lem for BC}
Let $b_1<0$, $h-1\in H^5(\R)$  satisfy
   \beq\label{condition for h 2}
2\geq h(x)\geq h_0>0,\quad\|\p_x h\|_{H^4}\leq 1,
   \eeq
 for some $h_0>0$. There exists $\e_1\in(0,1)$ such that for all $\e\in(0,\e_1)$,  there hold for any $f,g\in\cS(\R)$,
 \begin{align}
&(1).\quad \bigl( (\sqrt h\p_x +\f\e2 B_h(D))f\,|\,\p_xf\bigr)_{L^2}\sim\|f_x\|_{X_\e^0}^2,\label{B 1}\\
&(2). \quad\bigl\|(\sqrt h\p_x +\f\e2 B_h(D))f\|_{L^2}^2\sim\|f_x\|_{X_{\e^2}^0}^2,\label{B 2}\\
&(3).\quad\|f_x\|_{X^0_{\e^2}}^2-\|f\|_{L^2}^2\lesssim \bigl\|\bigl(\p_x(\sqrt h\cdot)+\f\e2 C_h(D)\bigr)f\|_{L^2}^2\lesssim\|f_x\|_{X^0_{\e^2}}^2+\|f\|_{L^2}^2,\label{C 1}\\
&(4).\quad \|f_x\|_{X_\e^0}^2-\|f\|_{L^2}^2\lesssim \bigl((\p_x(\sqrt h\cdot)+\f\e2 C_h(D))f\,|\,\p_xf\bigr)_{L^2}\|f_x\|_{X_\e^0}^2+\|f\|_{L^2}^2.\label{C 2}
 \end{align}
\end{lemma}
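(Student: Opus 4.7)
The plan is to treat the four bounds in parallel by expanding $B_h(D)$ and $C_h(D)$ into a \emph{principal dispersive part} of order $\p_x^3$ plus \emph{lower-order corrections} governed by derivatives of $h$, then to extract positivity from the principal part by integration by parts and absorb all remainders using the bound $\|\p_x h\|_{H^4}\le 1$, the interpolation \eqref{interpolation}, and the smallness of $\e$. Writing
\[
B_h(D)f = b_1\sqrt{h}\,\p_x^{2}(h^{2}\p_x f)+(b_2+b_3)h^{3/2}\p_x h\,\p_x^{2}f+r_1(h)\p_x f
= b_1 h^{5/2}\p_x^{3}f+\mathcal R_1(h;f),
\]
where $\mathcal R_1$ contains only derivatives of order $\le 2$ of $f$ multiplied by bounded functions of $h,\p_x h,\p_x^{2}h$, and analogously $C_h(D)g=b_1 h^{5/2}\p_x^{3}g+\mathcal R_2(h;g)$, the whole analysis reduces to understanding the two leading quadratic forms and controlling $\mathcal R_{1,2}$.

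For \eqref{B 1}, I would first write
\(
\big(\sqrt h\p_x f\,|\,\p_x f\big)_{L^2}=\int\sqrt{h}\,(\p_x f)^2\,dx\gtrsim \|f_x\|_{L^2}^2
\)
and then integrate by parts twice in the principal part of $\tfrac{\e}{2}(B_h(D)f\,|\,\p_x f)_{L^2}$: moving one derivative off $\p_x^{2}(h^{2}\p_x f)$ produces $-\int h^{5/2}(\p_x^{2}f)^{2}\,dx$ plus cross terms linear in $\p_x h$. Since $b_1<0$, this single term yields the positive quantity $\tfrac{\e}{2}|b_1|h_0^{5/2}\|\p_x^{2}f\|_{L^2}^{2}$, and the cross terms (as well as the contribution of $\mathcal R_1$) are handled by Cauchy--Schwarz and Young's inequality, taking advantage of $\|\p_x h\|_{L^\infty}\le \|\p_x h\|_{H^4}\le 1$ to write
\[
\e\|\p_x h\|_\infty\|\p_x f\|_{L^2}\|\p_x^{2}f\|_{L^2}
\le \delta\,\e\|\p_x^{2}f\|_{L^2}^{2}+C_\delta\e\|\p_x f\|_{L^2}^{2}.
\]
Choosing $\delta$ small and $\e\le\e_1$ suitably small absorbs the derivative-squared remainder into $\tfrac{\e}{2}|b_1|h_0^{5/2}\|\p_x^{2}f\|_{L^2}^{2}$ and the $\|\p_x f\|_{L^2}^{2}$ remainder into $\int\sqrt h(\p_x f)^{2}\,dx$. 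The upper bound is direct from Cauchy--Schwarz. Estimate \eqref{C 2} is obtained by exactly the same mechanism after writing $\p_x(\sqrt h f)=\sqrt h\p_x f+\tfrac{\p_x h}{2\sqrt h}f$ and integrating by parts once in $b_1(\p_x(h^{2}\p_x^{2}(\sqrt h f)),\p_x f)_{L^2}$; the only novelty is that the lower-order piece $\tfrac{\p_x h}{2\sqrt h}f$ and the remainder $\mathcal R_2$ produce contributions controlled by $\|f\|_{L^2}^{2}$, which is why \eqref{C 2} has the $-\|f\|_{L^2}^{2}$ on the left and $+\|f\|_{L^2}^{2}$ on the right.

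For \eqref{B 2}, I expand $\|\sqrt h\p_x f+\tfrac{\e}{2}B_h(D)f\|_{L^2}^{2}$ into three blocks: the principal quadratic forms $\|\sqrt h\p_x f\|_{L^2}^{2}\sim\|\p_x f\|_{L^2}^{2}$ and $\tfrac{\e^{2}}{4}\|b_1 h^{5/2}\p_x^{3}f\|_{L^2}^{2}\sim\e^{2}\|\p_x^{3}f\|_{L^2}^{2}$, plus the cross term $\e b_1\int h^{3}\p_x f\,\p_x^{3}f\,dx$. One integration by parts rewrites the cross term as $-\e b_1\int h^{3}(\p_x^{2}f)^{2}\,dx$ minus a cross term in $\p_x h$, and since $b_1<0$ the first is $\ge 0$ while the second is handled by Young's inequality and interpolation \eqref{interpolation}, which gives $\e\|\p_x^{2}f\|_{L^2}^{2}\lesssim \|f_x\|_{L^2}^{2}+\e^{2}\|\p_x^{3}f\|_{L^2}^{2}$. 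Finally, the contribution of $\e\mathcal R_1$ to the cross term and of $\e^{2}\|\mathcal R_1\|_{L^2}^{2}$ is estimated by $\|\p_x h\|_{H^4}\le 1$, Cauchy--Schwarz and interpolation, producing terms that can be absorbed into the two principal blocks for $\e$ small. Estimate \eqref{C 1} follows by the identical template applied to $\p_x(\sqrt h f)+\tfrac{\e}{2}C_h(D)f$, with the lower-order piece $\tfrac{\p_x h}{2\sqrt h}f$ responsible for the $\pm\|f\|_{L^2}^{2}$ on each side.

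The main obstacle I anticipate is the bookkeeping of the lower-order remainders $\mathcal R_{1,2}$ and of the several cross terms produced by each integration by parts: these involve $\p_x h,\p_x^{2}h$ multiplied by $\p_x f$ and $\p_x^{2}f$, and since $\|\p_x h\|_{H^4}\le 1$ is \emph{not} small, one cannot simply make them disappear. The resolution is a careful use of the interpolation \eqref{interpolation} to trade an $\e\|\p_x^{2}f\|_{L^2}^{2}$ for $\|f_x\|_{L^2}^{2}+\e^{2}\|\p_x^{3}f\|_{L^2}^{2}$ (or an $\e^{1/2}\|\p_x^{2}f\|_{L^2}^{2}$ for $\|f_x\|_{L^2}^{2}+\e\|\p_x^{2}f\|_{L^2}^{2}$ in the cases (1), (4)), combined with Young's inequality with a small parameter $\delta$ to keep a fixed fraction of the positive $|b_1|h_0^{5/2}$-coefficient of $\e\|\p_x^{2}f\|_{L^2}^{2}$ or $\e^{2}\|\p_x^{3}f\|_{L^2}^{2}$ intact. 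This fixes $\e_1>0$ depending only on $b_1,h_0$ and the universal bound $\|\p_x h\|_{H^4}\le 1$, after which the four equivalences follow.
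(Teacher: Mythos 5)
Your proposal is correct and follows essentially the same route as the paper: split $\sqrt h\,\p_x+\tfrac\e2 B_h(D)$ (resp.\ $\p_x(\sqrt h\,\cdot)+\tfrac\e2 C_h(D)$) into the principal piece $\sqrt h\,\p_x+\tfrac\e2 b_1h^{5/2}\p_x^3$ plus lower-order remainders, extract positivity of the $\e\|\p_x^2f\|_{L^2}^2$ (or $\e^2\|\p_x^3f\|_{L^2}^2$) blocks from $b_1<0$ after integration by parts, and absorb the $\p_xh$-cross terms and remainders using $\|\p_xh\|_{H^4}\le1$, the interpolation \eqref{interpolation}, and smallness of $\e$, with the extra $\tfrac{\p_xh}{2\sqrt h}f$ term accounting for the $\pm\|f\|_{L^2}^2$ in \eqref{C 1}--\eqref{C 2}. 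The only cosmetic difference is that in cases (1) and (4) the paper integrates the $\p_xh\,\p_x^2f\,\p_xf$ cross terms by parts once more so they are bounded directly by $\|f_x\|_{L^2}^2$, whereas you absorb them via Young's inequality with a small parameter; both work.
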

\begin{proof} We proceed to verify the equivalences one by one.

{\bf (1). Proof of \eqref{B 1}.} Using expression of $B_h(D)$ in \eqref{def of BC}, we have
\beno\begin{aligned}
\bigl( (\sqrt h\p_x &+\f\e2 B_h(D))f\,|\,\p_xf\bigr)_{L^2}
=\bigl(\sqrt h\p_xf\,|\,\p_xf\bigr)_{L^2}-\f{b_1 \e}{2}\bigl(  \p_x (h^2 \p_x f)\,|\,\p_x(\sqrt{h}\p_xf)\bigr)_{L^2}\\
&\quad+\f{(b_2+b_3)\e}{2}\bigl(h^{\f32}\p_x h \p_x^2f\,|\,\p_xf\bigr)_{L^2}+\f\e2\bigl( r_1(h)\p_xf\,|\,\p_xf\bigr)_{L^2}\\
&=\bigl( \sqrt h\p_xf\,|\,\p_xf\bigr)_{L^2}-\f{b_1 \e}{2}\bigl(h^{\f52}\p_x^2f\,|\,\p_x^2f)\bigr)_{L^2}+\f\e2 I_1,
\end{aligned}\eeno
where 
\beno
I_1\eqdefa(b_2+b_3-\f52b_1)\bigl(h^{\f32}\p_x h \p_x^2f\,|\,\p_xf\bigr)_{L^2}+\bigl( [r_1(h)-b_1h^{\f12}(\p_xh)^2]\p_xf\,|\,\p_xf\bigr)_{L^2}.
\eeno

For $I_1$, we get
\beno
I_1=\bigl(\{-\f12(b_2+b_3-\f52b_1)\p_x(h^{\f32}\p_x h)+r_1(h)-b_1h^{\f12}|\p_xh|^2\}\p_xf\,|\,\p_xf\bigr)_{L^2},
\eeno
which along with the expression of $r_1(h)$ in \eqref{def of Rh12} and the condition \eqref{condition for h 2} yields to
\beno\begin{aligned}
|I_1|&\lesssim\bigl(\|h^{\f32}\p_x^2 h\|_{L^\infty}+\|h^{\f12}(\p_xh)^2\|_{L^\infty}\bigr)\cdot\|f_x\|_{L^2}^2\\
&
\lesssim\bigl(\|\p_x^2h\|_{H^1}+\|\p_xh\|_{H^1}^2\bigr)\cdot\|f_x\|_{L^2}^2
\lesssim\|f_x\|_{L^2}^2.
\end{aligned}\eeno
Using the condition $0<h_0\leq h\leq 2$ and $b_1<0$ lead to 
\beno
\bigl( \sqrt h\p_xf\,|\,\p_xf\bigr)_{L^2}-\f{b_1 \e}{2}\bigl(h^{\f52}\p_x^2f\,|\,\p_x^2f)\bigr)_{L^2}\sim\|f_x\|_{L^2}^2+\e\|\p_xf_x\|_{L^2}^2\sim\|f_x\|_{X^0_\e}^2,
\eeno
Consequently, for sufficiently small $\e\in(0,1)$, there holds
\beno
\bigl( (\sqrt h\p_x+\f\e2 B_h(D))f\,|\,\p_xf\bigr)_{L^2}\sim\|f_x\|_{X_\e^0}^2.
\eeno
This is exactly \eqref{B 1}.

\smallskip

{\bf (2). Proof of \eqref{B 2}.} Thanks to the expression of $B_h(D)$, we have
\beq\label{eq 1 for B 2}
(\sqrt h\p_x +\f\e2 B_h(D))f=\sqrt h\p_xf+\f\e2b_1 h^{\f52}\p_x^3f+L_{h,1}(f),
\eeq
where 
\beno
L_{h,1}(f)= \f\e2(b_2+b_3+4b_1)h^{\f32}\p_x h \p_x^2f+ \f\e2\bigl(2b_1h^{\f12}\p_x(h\p_xh)+r_1(h)\bigr)\p_xf.
\eeno
Then we obtain
\beno
\|(\sqrt h\p_x +\f\e2 B_h(D))f\|_{L^2}^2
=\|\sqrt h\p_xf+\f\e2b_1 h^{\f52}\p_x^3f\|_{L^2}^2+
\|L_{h,1}(f)\|_{L^2}^2
+2\bigl(\sqrt h\p_xf+\f\e2b_1 h^{\f52}\p_x^3f\,\big|\,L_{h,1}(f)\bigr),
\eeno
and 
\beno
2|\bigl(\sqrt h\p_xf+\f\e2b_1 h^{\f52}\p_x^3f\,\big|\,L_{h,1}(f)\bigr)|
\leq\f12\|\sqrt h\p_xf+\f\e2b_1 h^{\f52}\p_x^3f\|_{L^2}^2
+2\|L_{h,1}(f)\|_{L^2}^2,
\eeno
which imply that
\beq\label{eq 2 for B 2}\begin{aligned}
\f12\|\sqrt h\p_xf+\f\e2b_1 h^{\f52}\p_x^3f\|_{L^2}^2-
\|L_{h,1}(f)\|_{L^2}^2&\leq \|(\sqrt h\p_x +\f\e2 B_h(D))f\|_{L^2}^2\\
&
\leq\f32\|\sqrt h\p_xf+\f\e2b_1 h^{\f52}\p_x^3f\|_{L^2}^2+3
\|L_{h,1}(f)\|_{L^2}^2.
\end{aligned}\eeq

{\it i). Estimate of $\|L_{h,1}(f)\|_{L^2}$.}
By virtue of $0<h_0\leq h\leq 2$ and the expression of $r_1(h)$, we get
\beno
\begin{aligned}
\|L_{h,1}(f)\|_{L^2}&\lesssim\e\|h^{\f32}\p_x h\|_{L^\infty}\|\p_x^2f\|_{L^2}+\e\bigl(\|h^{\f32}\p_x^2 h\|_{L^\infty}+\|h^{\f12}(\p_x h)^2\|_{L^\infty}\bigr)\|\p_xf\|_{L^2}\\
&\lesssim\e\|\p_xh\|_{H^1}\|\p_x^2f\|_{L^2}+\e\bigl(\|\p_x^2h\|_{H^1}+\|\p_xh\|_{H^1}^2\bigr)\|\p_xf\|_{L^2},
\end{aligned}
\eeno
which together with the condition $\|\p_xh\|_{H^3}\leq 1$ yields to
\beno
\|L_{h,1}(f)\|_{L^2}\lesssim\e\|f_x\|_{L^2}+\e^{\f12}\cdot\e^{\f12 }\|\p_xf_x\|_{L^2}.
\eeno
Using interpolation inequality \eqref{interpolation}, we obtain 
\beq\label{eq 4 for B 2}
\|L_{h,1}(f)\|_{L^2}\lesssim\e^{\f12}\cdot\|f_x\|_{X^0_{\e^2}}.
\eeq

{\it ii). Estimate of $\|\sqrt h\p_xf+\f\e2b_1 h^{\f52}\p_x^3f\|_{L^2}$.}
It is easy to check that
\beno\begin{aligned}
\|\sqrt h\p_xf+\f\e2b_1 h^{\f52}\p_x^3f\|_{L^2}^2
&=\|\sqrt h\p_xf\|_{L^2}^2
+\f{\e^2}{4}|b_1|^2\| h^{\f52}\p_x^3f\|_{L^2}^2
+\e b_1\bigl(h^3\p_xf\,\big|\,\p_x^3f\bigr)_{L^2},
\end{aligned}\eeno
and 
\beno
\bigl(h^3\p_xf\,\big|\,\p_x^3f\bigr)_{L^2}
=-\bigl(h^3\p_x^2f\,\big|\,\p_x^2f\bigr)_{L^2}
-3\bigl(h^2\p_xh\p_xf\,\big|\,\p_x^2f\bigr)_{L^2},
\eeno
which imply that
\beno\begin{aligned}
\|\sqrt h\p_xf+\f\e2b_1 h^{\f52}\p_x^3f\|_{L^2}^2
&=\|\sqrt h\p_xf\|_{L^2}^2
+\f{\e^2}{4}|b_1|^2\| h^{\f52}\p_x^3f\|_{L^2}^2
-\e b_1\| h^{\f32}\p_x^2f\|_{L^2}^2\\
&\quad -3\e b_1\bigl(h^2\p_xh\p_xf\,\big|\,\p_x^2f\bigr)_{L^2}.
\end{aligned}\eeno

Using the condition \eqref{condition for h 2} leads to
\beno
\e\bigl|\bigl(h^2\p_xh\p_xf\,\big|\,\p_x^2f\bigr)_{L^2}\bigr|
\lesssim\e\|\p_xh\|_{H^1}\|\p_xf\|_{L^2}\|\p_x^2f\|_{L^2}
\lesssim\e\|f_x\|_{L^2}\|\p_xf_x\|_{L^2}\lesssim\e^{\f12}\|f_x\|_{X^0_\e}^2.
\eeno
Since $b_1<0$ and $0<h_0\leq h\leq 2$, there holds 
\beno\begin{aligned}
&\|\sqrt h\p_xf\|_{L^2}^2
+\f{\e^2}{4}|b_1|^2\| h^{\f52}\p_x^3f\|_{L^2}^2
-\e b_1\| h^{\f32}\p_x^2f\|_{L^2}^2\\
\sim &\|f_x\|_{L^2}^2
+\e^2\| \p_x^2f_x\|_{L^2}^2+\e\|\p_xf_x\|_{L^2}^2
\sim \|f_x\|_{X^0_{\e^2}}^2,
\end{aligned}\eeno
where we used \eqref{interpolation}. Consequently, for sufficiently small $\e\in(0,1)$, we get
\beq\label{eq 5 for B 2}
\|\sqrt h\p_xf+\f\e2b_1 h^{\f52}\p_x^3f\|_{L^2}^2\sim \|f_x\|_{X^0_{\e^2}}^2.
\eeq

Thanks to \eqref{eq 4 for B 2} and \eqref{eq 5 for B 2}, we deduce from \eqref{eq 2 for B 2} that 
\beno
\|(\sqrt h\p_x +\f\e2 B_h(D))f\|_{L^2}^2\sim\|f_x\|_{X^0_{\e^2}}^2,
\eeno
provided that $\e\in(0,1)$ is sufficiently small. It verifies \eqref{B 2}.

\smallskip

{\bf (3). Proof of \eqref{C 1}.}  Due to the expression of $C_h(D)$ in \eqref{def of BC}, we first get 
\beq\label{eq 1 for C1}
\bigl(\p_x(\sqrt h\cdot)+\f\e2 C_h(D)\bigr)f
=\sqrt h\p_xf+\f\e2 b_1h^{\f52}\p_x^3f+ L_{h,2}(f),
\eeq
where
\beno\begin{aligned}
L_{h,2}(f)&\eqdefa \f{\p_xh}{2\sqrt h}f+\e b_1h\p_xh\p_x^2(\sqrt h f)+\f{b_1\e}{2}h^2[\p_x^3,\sqrt h]f\\
&\quad
-\f{(b_2+b_3)\e}{2}\p_x^2(h^{\f32}\p_xh f)-\f\e2\p_x\bigl(r_2(h)f\bigr).
\end{aligned}\eeno
Similar argument as \eqref{eq 2 for B 2} leads to
\beq\label{eq 2 for C1}
\begin{aligned}
\f12\|\sqrt h\p_xf+\f\e2b_1 h^{\f52}\p_x^3f\|_{L^2}^2-
\|L_{h,2}(f)\|_{L^2}^2&\leq \|(\sqrt h\p_x +\f\e2 C_h(D))f\|_{L^2}^2\\
&
\leq\f32\|\sqrt h\p_xf+\f\e2b_1 h^{\f52}\p_x^3f\|_{L^2}^2+3
\|L_{h,2}(f)\|_{L^2}^2.
\end{aligned}
\eeq

For the estimate of $\|L_{h,2}(f)\|_{L^2}$, using the condition $0<h_0\leq h\leq 2$ and the expression of $r_2(h)$, we have
\beno\begin{aligned}
\|L_{h,2}(f)\|_{L^2}&\lesssim\|\p_xh\|_{H^1}\|f\|_{L^2}+\e\|\p_xh\|_{H^1}\|\sqrt h f\|_{H^2}
+\e\|\p_x\sqrt h\|_{H^2}\|f\|_{H^2}\\
&\quad+\e\|h^{\f32}\p_xh\|_{H^2}\|f\|_{H^2}
+\e\bigl(\|\sqrt h(\p_xh)^2\|_{H^1}+\|h^{\f32}\p_x^2h\|_{H^1}\bigr)\|f\|_{H^1},
\end{aligned}\eeno
which along with \eqref{multiplier bounds} and the assumption $\|\p_xh\|_{H^3}\leq 1$ implies
\beno
\|L_{h,2}(f)\|_{L^2}\lesssim\|f\|_{L^2}+\e\|f\|_{H^2}\lesssim\|f\|_{L^2}+\e\|\p_xf_x\|_{L^2}.
\eeno
Then by virtue of \eqref{interpolation}, we get
\beq\label{eq 3 for C1}
\|L_{h,2}(f)\|_{L^2}^2\lesssim\|f\|_{L^2}^2+\e\|f_x\|_{X^0_{\e^2}}^2.
\eeq

Thanks to \eqref{eq 5 for B 2} and \eqref{eq 3 for C1}, we deduce \eqref{C 1} from \eqref{eq 2 for C1} supposed that $\e\in(0,1)$ is sufficiently small.

\smallskip

{\bf (4). Proof of \eqref{C 2}.} Using the expression of $C_h(D)$ in \eqref{def of BC}, we have
\beno\begin{aligned}
&\bigl((\p_x(\sqrt h\cdot)+\f\e2 C_h(D))f\,|\,\p_xf\bigr)_{L^2}\\ 
=&\bigl( \sqrt h\p_xf\,|\,\p_xf\bigr)_{L^2}-\f{b_1 \e}{2}\bigl(h^{\f52}\p_x^2f\,|\,\p_x^2f)\bigr)_{L^2}+\bigl(\p_x(\sqrt h)f\,|\,\p_xf \bigr)_{L^2}+\f\e2 I_2,
\end{aligned}\eeno
where 
\beno\begin{aligned}
&I_2\eqdefa-b_1\bigl( [\p_x^2,\sqrt h]f\,|\,h^2\p_x^2f\bigr)_{L^2}+(b_2+b_3)\bigl(\p_x(h^{\f32}\p_x hf)\,|\,\p_x^2f\bigr)_{L^2}-\bigl(\p_x(r_2(h)f)\,|\,\p_xf\bigr)_{L^2}.
\end{aligned}\eeno

Thanks to the expressions of $r_2(h)$ and \eqref{condition for h 2}, we get
\beno
\e|I_2|\lesssim\e\|f\|_{H^1}\|f_x\|_{H^1}\lesssim\|f\|_{L^2}^2+\e\|f_x\|_{L^2}^2+\e\|f_x\|_{L^2}\|\p_xf_x\|_{L^2},
\eeno
which along with the equivalence $\|g\|_{X^0_\e}^2\sim\|g\|_{L^2}^2+\e\|\p_xg\|_{L^2}^2$ implies
\beno
\e|I_2|\lesssim\|f\|_{L^2}^2+\e^{\f12}\|f_x\|_{X^0_\e}^2.
\eeno

While using \eqref{condition for h 2}, we get
\beno
\bigl|\bigl(\p_x(\sqrt h)f\,|\,\p_xf \bigr)_{L^2}\bigr|=\f12\bigl|\bigl(\p_x^2(\sqrt h)f\,|\,f \bigr)_{L^2}\bigr|
\lesssim\|f\|_{L^2}^2.
\eeno

The condition $0<h_0\leq h\leq 2$ and $b_1<0$ yield that
\beno
\bigl( \sqrt h\p_xf\,|\,\p_xf\bigr)_{L^2}-\f{b_1 \e}{2}\bigl(h^{\f52}\p_x^2f\,|\,\p_x^2f)\bigr)_{L^2}\sim\|f_x\|_{L^2}^2+\e\|\p_xf_x\|_{L^2}^2\sim\|f_x\|_{X^0_\e}^2.
\eeno

Consequently, for sufficiently small $\e\in(0,1)$, we could obtain \eqref{C 2}.

\smallskip

Based on the above proof, there exists $\e_1\in(0,1)$ such that for any $\e\in(0,\e_1)$, \eqref{B 1}, \eqref{B 2}, \eqref{C 1} and \eqref{C 2} all hold. The lemma is proved.
\end{proof}

\subsection{ {\it a priori} estimates on $(\p_t^kV,\p_t^k\eta)$ for $k=0,1,2$.} 
Since the coefficients of the linear parts of system \eqref{Boussinesq-like, d=1} depend on $h$ and $\p_xh=O(1)$, proving the long time existence result requires  to derive  the higher-order regularity in time variable $t$ rather than  the space variable $x$. The higher-order regularity in $x$ can then be recovered from the higher order regularity in $t$ through the equations in \eqref{Boussinesq-like, d=1}. 

Before presenting the key {\it a priori} energy estimates of $(\p_t^kV,\p_t^k\eta)$, we introduce the total energy functional associated with \eqref{Boussinesq-like, d=1}, along with the energy functional corresponding to $(\p_t^kV,\p_t^k\eta)$, as follows:
\beq\label{totoal functional}
\cE(t)\eqdefa\|V\|_{X^2_{\e^3}}^2
+\|V_t\|_{X^1_{\e^2}}^2+\|V_{tt}\|_{X^0_{\e}}^2+\|\eta\|_{X^2_{\e^2}}^2+\|\eta_t\|_{X^1_{\e}}^2+\|\eta_{tt}\|_{L^2}^2,
\eeq
\beq\label{functional for V eta t}
E(t)\eqdefa\sum_{k=0}^2 \left\{  \bigl((1-\f\e2 \cP_h^1 )\p_t^kV\,\bigr|\,\p_t^kV\bigr)_{L^2}+\bigl(\p_t^k\eta\,\bigr|\,\p_t^k\eta\bigr)_{L^2}\right\}.
\eeq

The main result of this subsection is stated in the following proposition.
   
   \begin{prop}\label{L2 space time energy estimate} Let $\e\in(0,1)$, $T_0>0$, $h-1\in H^5(\R)$ satisfy the condition \eqref{condition for h 2}. Then for any sufficiently smooth solution $(V,\eta)$ to \eqref{Boussinesq-like, d=1} over the time interval $[0,\f{T_0}{\e}]$, there holds
 \beq\label{space time energy estimate}
   		\f{d}{dt}\bigl(E(t)+\f\e2\wt{E}_2(t)\bigr)
   		\lesssim\e\cdot\cE(t)+\e\bigl(\cE(t)\bigr)^{\f32},  
\eeq
where the associated functional $\wt{E}_2(t)$ satisfies
\beq\label{estimate of wt E2}
|\wt E_2(t)|\lesssim \|V_t\|_{X_\e^1}^2.
\eeq
   \end{prop}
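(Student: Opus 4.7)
The plan is to apply $\pa_t^k$ to both equations of \eqref{Boussinesq-like, d=1} for $k=0,1,2$, pair the results with $\pa_t^k V$ and $\pa_t^k\eta$ respectively in $L^2$, and sum the six identities. Since $h$ is time-independent and $\cP_h^1$ is symmetric on $L^2$, the left-hand side collapses exactly to $\tfrac12\tfrac{d}{dt}E(t)$ with $E$ given by \eqref{functional for V eta t}. On the right, the first-order flux contributions $-(\sqrt{h}\pa_x\pa_t^k\eta\,|\,\pa_t^k V)_{L^2}-(\pa_x(\sqrt{h}\pa_t^k V)\,|\,\pa_t^k\eta)_{L^2}$ cancel exactly by integration by parts in $x$, and the dispersive contributions $-\tfrac{\e}{2}[(B_h(D)\pa_t^k\eta\,|\,\pa_t^k V)_{L^2}+(C_h(D)\pa_t^k V\,|\,\pa_t^k\eta)_{L^2}]$ collapse via the antisymmetry identity \eqref{key 1} to the single residual $-\tfrac{\e}{2}(r(h)\pa_x\pa_t^k\eta\,|\,\pa_t^k V)_{L^2}$. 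It remains to control these residuals together with the nonlinear contributions $-\tfrac{\e}{2}(\pa_t^k N_1\,|\,\pa_t^k V)_{L^2}-\tfrac{\e}{2}(\pa_t^k N_2\,|\,\pa_t^k\eta)_{L^2}$, where $N_1$, $N_2$ denote the nonlinearities in \eqref{Boussinesq-like, d=1}.

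For $k=0,1$ the linear residual is bounded by $\e\cE(t)$ directly, since $\|\pa_x\eta\|_{L^2}$ and $\|\pa_x\eta_t\|_{L^2}$ are controlled by $\|\eta\|_{X_{\e^2}^2}$ and $\|\eta_t\|_{X_\e^1}$. The principal obstacle is $k=2$: $\cE(t)$ controls only $\|\eta_{tt}\|_{L^2}$, so $\pa_x\eta_{tt}$ is unavailable and the residual $-\tfrac{\e}{2}(r(h)\pa_x\eta_{tt}\,|\,V_{tt})_{L^2}$ must be treated structurally. Integrating by parts in $x$ produces the harmless commutator $\tfrac{\e}{2}(\pa_x r(h)\,V_{tt}\,|\,\eta_{tt})_{L^2}$ of order $\e\cE$ together with the main term $\tfrac{\e}{2}(r(h)\pa_x V_{tt}\,|\,\eta_{tt})_{L^2}$. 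For the latter I substitute $\eta_{tt}=-\pa_x(\sqrt{h}V_t)+O(\e)+\mathrm{NL}$ from the second equation of \eqref{Boussinesq-like, d=1}; the leading piece becomes $-\tfrac{\e}{2}(r(h)\sqrt{h}\pa_x V_{tt}\,|\,\pa_x V_t)_{L^2}$, which, because $h$ does not depend on $t$, equals $-\tfrac{\e}{4}\tfrac{d}{dt}(r(h)\sqrt{h}\pa_x V_t\,|\,\pa_x V_t)_{L^2}$. Defining
\[
\wt{E}_2(t):=\bigl(r(h)\sqrt{h}\,\pa_x V_t\,\big|\,\pa_x V_t\bigr)_{L^2},
\]
this contribution is precisely $-\tfrac14\tfrac{d}{dt}(\e\wt{E}_2(t))$ inside $\tfrac12\tfrac{d}{dt}E(t)$, which after doubling is $-\tfrac{d}{dt}(\tfrac{\e}{2}\wt{E}_2(t))$ and therefore absorbs into the left-hand side of \eqref{space time energy estimate}. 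The bound \eqref{estimate of wt E2} follows from $\|r(h)\sqrt{h}\|_{L^\infty}\lesssim 1$ under \eqref{def of Rh} and \eqref{condition for h 2}. The $O(\e)$ remainders generated by the substitution (involving $C_h(D)V_t$ and nonlinear pieces) are handled by a further $\pa_t$-trick on $\pa_x V_{tt}=\pa_t\pa_x V_t$ combined with integration by parts in $x$, invoking the interpolation inequality \eqref{interpolation}; all resulting contributions are of order $\e\cE+\e\cE^{3/2}$.

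The nonlinear terms at $k=0,1,2$ are bounded by $\e\cE^{3/2}$ via Sobolev product estimates, with the only delicacy being a parallel loss-of-derivative at $k=2$: the individual terms $-\tfrac{\e}{2}(\eta\pa_x\eta_{tt}/\sqrt{h}\,|\,V_{tt})_{L^2}$ from $\pa_t^2 N_1$ and $-\tfrac{\e}{2}(\eta\pa_x V_{tt}/\sqrt{h}\,|\,\eta_{tt})_{L^2}$ from $\pa_t^2 N_2$ each carry an uncontrollable $\pa_x\eta_{tt}$ or $\pa_x V_{tt}$, but their sum equals $\tfrac{\e}{2}(\pa_x(\eta/\sqrt{h})\,\eta_{tt}\,|\,V_{tt})_{L^2}$ after one integration by parts, which is then controlled by $\e\cE^{3/2}$ using $H^1(\R)\hookrightarrow L^\infty(\R)$. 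Collecting all bounds and summing over $k$ produces \eqref{space time energy estimate}. The main obstacle is the construction of $\wt{E}_2$ that tames the unavoidable loss of one spatial derivative on $\eta_{tt}$ in the principal dispersive cross-term at $k=2$, together with the recognition of the paired $N_1$–$N_2$ cancellation that handles the analogous nonlinear loss.
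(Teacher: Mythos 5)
Your strategy is the same as the paper's: apply $\p_t^k$ for $k=0,1,2$, use the antisymmetry identity \eqref{key 1} to reduce the dispersive cross terms to the residual $-\f\e2\bigl(r(h)\p_x\p_t^k\eta\,|\,\p_t^kV\bigr)_{L^2}$, treat $k=2$ by substituting the second equation of \eqref{Boussinesq-like, d=1} for $\eta_{tt}$ and extracting an exact time derivative into a modified energy $\wt E_2$, and exploit the pairing between the $\eta\p_x\eta$ and $\p_x(\eta V/\sqrt h)$ nonlinearities at $k=2$ (the paper organizes that cancellation as an exact cancellation of $(\f{\eta}{\sqrt h}\p_x\eta_{tt}\,|\,V_{tt})_{L^2}$ against $-(\f{\eta}{\sqrt h}V_{tt}\,|\,\p_x\eta_{tt})_{L^2}$ plus symmetrization, which is equivalent to your version).

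There is, however, a genuine gap: your $\wt E_2$ is too small, and your claim that all remainders generated by the substitution are of order $\e\cE+\e\cE^{\f32}$ is false for the $C_h(D)V_t$ piece. That remainder is $-\f{\e^2}{4}\bigl(r(h)\p_xV_{tt}\,|\,C_h(D)V_t\bigr)_{L^2}$, whose principal part is $\sim\e^2\bigl(r(h)h^{\f52}\p_xV_{tt}\,|\,\p_x^3V_t\bigr)_{L^2}$. Since $\cE(t)$ only controls $\e\|\p_xV_{tt}\|_{L^2}^2$ and $\e^2\|\p_x^3V_t\|_{L^2}^2$, the direct bound is only $\e^{\f12}\cE$, and one integration by parts produces $\e^2\bigl(r(h)h^{\f52}\p_x^2V_{tt}\,|\,\p_x^2V_t\bigr)_{L^2}$, where $\p_x^2V_{tt}$ is not controlled at all; the only way out is precisely your ``$\p_t$-trick'', which turns this term into $-\f{\e^2}{2}\f{d}{dt}\bigl(r(h)h^{\f52}\p_x^2V_t\,|\,\p_x^2V_t\bigr)_{L^2}$, i.e.\ it forces a second, $\e$-weighted correction that must be added to the modified energy rather than estimated. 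With $\wt E_2(t)=\bigl(r(h)\sqrt h\,\p_xV_t\,|\,\p_xV_t\bigr)_{L^2}$ alone, \eqref{space time energy estimate} therefore does not close. The fix is what the paper does: take $\wt E_2$ to be (up to harmless constants) $\bigl(r(h)\sqrt h\,\p_xV_t\,|\,\p_xV_t\bigr)_{L^2}-\f\e2 b_1\bigl(r(h)h^{\f52}\p_x^2V_t\,|\,\p_x^2V_t\bigr)_{L^2}$; the added piece is compatible with \eqref{estimate of wt E2} since $\e\|\p_x^2V_t\|_{L^2}^2\leq\|V_t\|_{X_\e^1}^2$, after which the remaining commutator-type terms are indeed $O(\e\cE+\e\cE^{\f32})$ as you describe.
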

\begin{proof} We divide the proof into several steps.

{\bf Step 1. Energy functionals.} For $k=0,1,2$, applying $\p_t^k$ to both sides of equations in \eqref{Boussinesq-like, d=1} leads to
  \beq\label{Boussinesq-like k}
   \left\{ \begin{aligned}
    & (1-\f\e2 \cP_h^1 ) \p_t(\p_t^kV)+ \sqrt{h} \p_x(\p_t^k\eta) + \f\e2 B_h(D)\p_t^k\eta\\ 
    &\qquad\qquad+\frac{\e}{2\sqrt h}\p_t^k(\eta \p_x \eta)+\frac{3\e}{2\sqrt h}\p_t^k( V \p_xV)=\f{\e\p_xh}{4h^{\f32}}\p_t^k(|V|^2),\\
     &\p_t(\p_t^k\eta) + \p_x( \sqrt{h}\p_t^kV) + \f\e2 C_h(D)\p_t^kV+\f\e2\p_t^k\p_x(\f{\eta V}{\sqrt h})=0.
   \end{aligned}  \right.
    \eeq
 We define the energy functional associated with \eqref{Boussinesq-like k} by   
\beno
E_k(t)\eqdefa\bigl((1-\f\e2 \cP_h^1 )\p_t^kV\,\bigr|\,\p_t^kV\bigr)_{L^2}+\bigl(\p_t^k\eta\,\bigr|\,\p_t^k\eta\bigr)_{L^2}.
\eeno
Then using $\cP_h^1 = a_1 \p_x ( h^2 \p_x \quad )$ and the assumptions $a_1>0$ and $2\geq h(x)\geq h_0>0$, we get
\beq\label{equivalent 1}\begin{aligned}
E_k(t)&=\|\p_t^kV\|_{L^2}^2+\f\e2 a_1\|h\p_x\p_t^kV\|_{L^2}^2+\|\p_t^k\eta\|_{L^2}^2\\
&
\sim\|\p_t^kV\|_{L^2}^2+\e\|\p_x\p_t^kV\|_{L^2}^2+\|\p_t^k\eta\|_{L^2}^2.
\end{aligned}\eeq

{\bf Step 2. Estimates of $(\p_t^kV,\p_t^k\eta)$. }
Due to \eqref{Boussinesq-like k} and  $\cP_h^1 = a_1 \p_x ( h^2 \p_x \quad )$, we have
\beno\begin{aligned}
\f12\f{d}{dt} E_k(t)&=\bigl((1-\f\e2 \cP_h^1 ) \p_t(\p_t^kV)\,\big|\,\p_t^kV\bigr)_{L^2}+\bigl(\p_t(\p_t^k\eta)\,\big|\,\p_t^k\eta\bigr)_{L^2}\\
& 
=-\f\e2\cL_k-\f\e2\cS_k-\f{3\e}{2}\cT_k+\f\e4\cN_k,
\end{aligned}\eeno
where 
\beq\label{def of LSTN}\begin{aligned}
\cL_k&\eqdefa\bigl(B_h(D)\p_t^k\eta\,\big|\,\p_t^kV\bigr)_{L^2}+\bigl(C_h(D)\p_t^kV\,\big|\,\p_t^k\eta\bigr)_{L^2},\\
\cS_k&\eqdefa\bigl(\frac{1}{\sqrt h}\p_t^k(\eta \p_x \eta)\,\big|\,\p_t^kV\bigr)_{L^2}+\bigl(\p_t^k\p_x(\f{\eta V}{\sqrt h})\,\big|\,\p_t^k\eta\bigr)_{L^2},\\
\cT_k&\eqdefa\bigl(\frac{1}{\sqrt h}\p_t^k( V \p_xV)\,\big|\,\p_t^kV\bigr)_{L^2},\quad
\cN_k\eqdefa\bigl(\f{\p_xh}{h^{\f32}}\p_t^k(|V|^2)\,\big|\,\p_t^kV\bigr)_{L^2}.
\end{aligned}\eeq

Applying  \eqref{key 1} to $\cL_k$ implies that
\beq\label{eq 1 case 2}
\f12\f{d}{dt} E_k(t)=-\f\e2\bigl(r(h)\p_x\p_t^k\eta\,|\,\p_t^kV\bigr)_{L^2}-\f\e2\cS_k-\f{3\e}{2}\cT_k+\f\e4\cN_k,
\eeq
where $r(h)$ was defined in \eqref{def of Rh}.

{\bf Step 3. Estimates of $E_0(t)$ and $E_1(t)$.} For $k=0,1$, using the expression of $r(h)$ in \eqref{def of Rh} and the condition \eqref{condition for h 2}, we have
\beno\begin{aligned}
&\bigl|\bigl(r(h)\p_x\eta\,|\,V\bigr)_{L^2}\bigr|\leq\|r(h)\|_{L^\infty}\|\p_x\eta\|_{L^2}\|V\|_{L^2}
\lesssim\bigl(\|\p_xh\|_{L^\infty}^2+\|\p_x^2h\|_{L^\infty}\bigr)\|\p_x\eta\|_{L^2}\|V\|_{L^2},\\
&\bigl|\bigl(r(h)\p_x\eta_t\,|\,V_t\bigr)_{L^2}\bigr|\leq\|r(h)\|_{L^\infty}\|\p_x\eta_t\|_{L^2}\|V_t\|_{L^2}
\lesssim\bigl(\|\p_xh\|_{L^\infty}^2+\|\p_x^2h\|_{L^\infty}\bigr)\|\p_x\eta_t\|_{L^2}\|V_t\|_{L^2},
\end{aligned}\eeno
which along with \eqref{condition for h 2} and the inequality $\|f\|_{L^\infty(\R)}\lesssim\|f\|_{H^1(\R)}$ implies
\beno
\bigl|\bigl(r(h)\p_x\eta\,|\,V\bigr)_{L^2}\bigr|+\bigl|\bigl(r(h)\p_x\eta_t\,|\,V_t\bigr)_{L^2}\bigr|\lesssim\|\p_xh\|_{H^2}\bigl(\|\eta\|_{H^1}\|V\|_{L^2}+\|\eta_t\|_{H^1}\|V_t\|_{L^2}\bigr).
\eeno
Then using $\|\p_xh\|_{H^4}\leq 1$, we deduce from \eqref{eq 1 case 2} that
\beq\label{estimate of E01 a}
\f{d}{dt}\bigl(E_0(t)+E_1(t)\bigr)\lesssim\e\bigl(\|\eta\|_{H^1}\|V\|_{L^2}+\|\eta_t\|_{H^1}\|V_t\|_{L^2}\bigr)+\e\sum_{k=0}^1\bigl(|\cS_k|+|\cT_k|+|\cN_k|\bigr).
\eeq

The expressions of  $\cS_0$ and $\cS_1$ in \eqref{def of LSTN}, along with \eqref{condition for h 2} and the inequality $\|f\|_{L^\infty(\R)}\lesssim\|f\|_{H^1(\R)}$, lead to
\beno\begin{aligned}
|\cS_0|&\lesssim\|\frac{1}{\sqrt h}(\eta \p_x \eta)\|_{L^2}\|V\|_{L^2}
+\|\p_x(\f{\eta V}{\sqrt h})\|_{L^2}\cdot\|\eta\|_{L^2}\\
&\lesssim\|\eta\|_{H^1}^2\|V\|_{L^2}+\|V\|_{H^1}\|\eta\|_{H^1}\|\eta\|_{L^2}
\lesssim\|\eta\|_{H^1}^2\|V\|_{H^1},\\
|\cS_1|&\lesssim\|\frac{1}{\sqrt h}\p_t(\eta \p_x \eta)\|_{L^2}\|\p_tV\|_{L^2}
+\|\p_t\p_x(\f{\eta V}{\sqrt h})\|_{L^2}\cdot\|\p_t\eta\|_{L^2}\\
&\lesssim\|\eta\|_{H^1}\|\eta_t\|_{H^1}\|V_t\|_{L^2}+\bigl(\|V_t\|_{H^1}\|\eta\|_{H^1}+\|V\|_{H^1}\|\eta_t\|_{H^1}\bigr)\|\eta_t\|_{L^2}\\
&\lesssim\bigl(\|\eta\|_{H^1}+\|V\|_{H^1}\bigr)\cdot\bigl(\|\eta_t\|_{H^1}^2
+\|V_t\|_{H^1}^2\bigr),\\
\end{aligned}\eeno
which imply that
\beno
|\cS_0|+|\cS_1|\lesssim\bigl(\|\eta\|_{H^1}+\|V\|_{H^1}\bigr)\cdot\bigl(\|\eta\|_{H^1}^2+\|\eta_t\|_{H^1}^2+\|V_t\|_{H^1}^2\bigr).
\eeno
Similar derivation yields to
\beno
|\cT_0|+|\cT_1|+|\cN_0|+|\cN_1|\lesssim\|V\|_{H^1}\bigl(\|V\|_{H^1}^2
+\|V_t\|_{H^1}^2\bigr).
\eeno

Consequently, we derive from \eqref{estimate of E01 a} that 
\beno\begin{aligned}
\f{d}{dt}\bigl(E_0(t)+E_1(t)\bigr)&\lesssim\e\bigl(\|\eta\|_{H^1}\|V\|_{L^2}+\|\eta_t\|_{H^1}\|V_t\|_{L^2}\bigr)\\
&\qquad+\e\bigl(\|\eta\|_{H^1}^2+\|V\|_{H^1}^2+\|\eta_t\|_{H^1}^2+\|V_t\|_{H^1}^2\bigr)^{\f32},
\end{aligned}\eeno
which along with \eqref{totoal functional} implies
\beq\label{estimate of E01}
\f{d}{dt}\bigl(E_0(t)+E_1(t)\bigr)\lesssim\e\cdot\cE(t)+\e\bigl(\cE(t)\bigr)^{\f32}.
\eeq

{\bf Step 4. Estimate of  $E_2(t)$.} Since $(V_{tt},\eta_{tt})$ is expected to belong to space $X_\e^0\times L^2$,  the right-hand side term $\f\e2\bigl(r(h)\p_x\p_t^2\eta\,|\,\p_t^2V\bigr)_{L^2}$ in \eqref{eq 1 case 2} can not be directly controlled without losing  the factor $\sqrt\e$. To overcome this difficulty, we use the equations in \eqref{Boussinesq-like, d=1} to achieve the following {\bf claim}:
 \beq\label{estimate for BC}
\bigl(r(h)\p_x\p_t^2\eta\,|\,\p_t^2V\bigr)_{L^2} =  \frac{d}{dt}\wt{E}_2(t) + R_2,
\eeq
 and 
 \beq\label{bound of e2}
 |\wt E_2(t)|\lesssim \|V_t\|_{X_\e^1}^2,\quad |R_2|\lesssim\cE(t)
+\bigl(\cE(t)\bigr)^{\f32}.
 \eeq

 {\it Step 4.1. The proof of \eqref{estimate for BC} and \eqref{bound of e2}.}
Indeed, by using the evolution equation of $\eta$ in \eqref{Boussinesq-like, d=1}, we have
\beno
 \p_t^2 \eta=\p_t\eta_t = - \p_x ( \sqrt{h} \p_tV ) - \f\e2 C_h(D) \p_t V-\f\e2\p_t\p_x(\f{\eta V}{\sqrt h}),
\eeno
which yields to
\beq\label{BC 1}\begin{aligned}
\bigl(r(h)\p_x\p_t^2\eta\,|\,\p_t^2V\bigr)_{L^2}&=-\bigl(r(h)\p_x^2( \sqrt{h} \p_t V )\,|\,\p_t^2V\bigr)_{L^2}-\f\e2\bigl(r(h)\p_xC_h(D) \p_t V\,|\,\p_t^2V\bigr)_{L^2}\\
&\qquad- \f\e2\bigl(r(h)\p_t\p_x^2(\f{\eta V}{\sqrt h})\,|\,\p_t^2V\bigr)_{L^2}\\
&\eqdefa R_{21}+ R_{22}+ R_{23}.
\end{aligned}\eeq

{\it (i). For $R_{21}$}, direct calculations give rise to
\beno\begin{aligned}
R_{21}&=\f12\f{d}{dt}\bigl(r(h)\sqrt{h}\p_x\p_t V\,|\,\p_x\p_tV\bigr)_{L^2}+\bigl(\p_xr(h)\p_x\sqrt{h}\p_t V\,|\,\p_t^2V\bigr)_{L^2}\\
&\qquad+\bigl(\p_xr(h)\sqrt{h}\p_x\p_t V\,|\,\p_t^2V\bigr)_{L^2}-\bigl(\p_x\bigl(r(h)\p_x\sqrt{h}\p_t V\bigr)\,|\,\p_t^2V\bigr)_{L^2}\\
&\eqdefa \f12\f{d}{dt}\wt E_{21}(t)+\wt{R}_{21}.
\end{aligned}\eeno
where
\beno
\wt E_{21}(t)=\bigl(r(h)\sqrt{h}\p_xV_t\,|\,\p_xV_t\bigr)_{L^2}.
\eeno

Thanks to \eqref{def of Rh} and \eqref{condition for h 2}, we get
\beq\label{eq 2 case 2}
\wt E_{21}(t)|\lesssim\|\p_xh\|_{H^2}\|\p_xV_t\|_{L^2}^2\quad
\text{and}\quad
|\wt{R}_{21}|\lesssim\|\p_xh\|_{H^3}\|V_t\|_{H^1}\|V_{tt}\|_{L^2}.
\eeq

{\it (ii). For $R_{22}$}, the expression of $C_h(D)$ in \eqref{def of BC}, along with a derivation similar to that of $R_{21}$, suggests that
\beno\begin{aligned}
R_{22}
&=-\f\e2 b_1\bigl(h^2\p_x^2(\sqrt h \p_t V)\,|\,\p_x^2\bigl(r(h)\p_t^2V\bigr)\bigr)_{L^2}-\f\e2(b_2+b_3)\bigl(\p_x^2(h^{\f32}\p_xh \p_t V)\,|\,\p_x\bigl(r(h)\p_t^2V\bigr)\bigr)_{L^2}\\
&\qquad+\f\e2 \bigl(\p_x(r_2(h) \p_tV)\,|\,\p_x\bigl(r(h)\p_t^2V\bigr)\bigr)_{L^2}\\
&=\f12\f{d}{dt}\wt E_{22}(t)+\wt{R}_{22},
\end{aligned}\eeno
where 
\beno
\wt E_{22}(t)\eqdefa-\f\e2 b_1\bigl(r(h)h^{\f52} \p_x^2V_t)\,|\,\p_x^2V_t\bigr)_{L^2}
\eeno
and $\wt{R}_{22}$ contains the remainder terms of $R_{22}$. Similar derivation as \eqref{eq 2 case 2} leads to
\beq\label{eq 3 case 2}\begin{aligned}
&|\wt E_{22}(t)|\lesssim\|\p_xh\|_{H^2}\cdot\e\|\p_x^2V_t\|_{L^2}^2
\lesssim\|\p_xh\|_{H^2}\|V_t\|_{X^1_\e}^2,\\
\text{and}\quad&
|\wt{R}_{22}|\lesssim\|\p_xh\|_{H^3}\cdot\e\|V_t\|_{H^2}\|V_{tt}\|_{H^1}
\lesssim\|\p_xh\|_{H^3}\|V_t\|_{X^1_\e}\|V_{tt}\|_{X^0_\e}.
\end{aligned}\eeq

{\it (iii). For $R_{23}$}, using the condition \eqref{condition for h 2}, we have
\beno\begin{aligned}
|R_{23}|&\lesssim\e\|r(h)\|_{L^\infty}\bigl\|\p_x^2\p_t\bigl( \frac{\eta V}{\sqrt h} \bigr)\bigr\|_{L^2}\|\p_t^2V\|_{L^2}\\
&\lesssim \e\bigl(\|\eta_t\|_{H^2}\|V\|_{H^2}+\|\eta\|_{H^2}\|V_t\|_{H^2}\bigr)\|V_{tt}\|_{L^2}\\
&\lesssim \bigl(\|\eta_t\|_{X^1_\e}\|V\|_{H^2}+\|\eta\|_{H^2}\|V_t\|_{X^1_\e}\bigr)\|V_{tt}\|_{L^2}
\end{aligned}\eeno
which implies
\beq\label{eq 4 case 2}
|R_{23}|\lesssim\bigl(\|\eta\|_{H^2}+\|V\|_{H^2}\bigr)\cdot\bigl(\|\eta_t\|_{X^1_\e}^2+\|V\|_{H^2}^2+\|V_t\|_{X^1_\e}^2+\|V_{tt}\|_{L^2}^2\bigr).
\eeq

Consequently,  by defining
\beno\begin{aligned}
&\wt E_2(t)\eqdefa\wt E_{21}(t)+\wt E_{22}(t),\\
&R_2\eqdefa \wt{R}_{21}+\wt{R}_{22}+R_{23},
\end{aligned}\eeno 
 we derive \eqref{estimate for BC} from \eqref{BC 1}. Moreover, by virtue of \eqref{eq 2 case 2}, \eqref{eq 3 case 2} and \eqref{eq 4 case 2}, we have 
 \beno
 |\wt E_2(t)|\lesssim\|\p_x h\|_{H^2} \|V_t\|_{X_\e^1}^2\lesssim \|V_t\|_{X_\e^1}^2,
 \eeno
 \beno
|R_2|\lesssim\|V_t\|_{X^1_\e}\|V_{tt}\|_{X^0_\e}
+\bigl(\|\eta\|_{H^2}^2+\|\eta_t\|_{X^1_\e}^2+\|V\|_{H^2}^2+\|V_t\|_{X^1_\e}^2+\|V_{tt}\|_{L^2}^2\bigr)^{\f32},
 \eeno
 which along with \eqref{totoal functional} give rise to \eqref{bound of e2}.

\smallskip

{\it Step 4.2. Estimate of $E_2(t)$.} Thanks to \eqref{eq 1 case 2} and \eqref{estimate for BC}, we have
\beq\label{eq 5 case 2}
\f12\f{d}{dt}\bigl(E_2(t)+\f\e2\wt E_2(t)\bigr)=-\f\e2 R_2-\f\e2\cS_2-\f{3\e}{2}\cT_2+\f\e4\cN_2.
\eeq

(i). For $\cS_2$, the expression in \eqref{def of LSTN} leads to
\beno\begin{aligned}
\cS_2&=\bigl\{\bigl(\frac{\eta}{\sqrt h}\p_x \eta_{tt}\,\big|\,V_{tt}\bigr)_{L^2}-\bigl(\f{\eta }{\sqrt h}V_{tt}\,\big|\,\p_x\eta_{tt}\bigr)_{L^2}\bigr\}
-\bigl(\f{V }{\sqrt h}\eta_{tt}\,\big|\,\p_x\eta_{tt}\bigr)_{L^2}\\
&\qquad
+\bigl(\frac{1}{\sqrt h}[\p_t^2,\eta]\p_x \eta)\,\big|\,V_{tt}\bigr)_{L^2}+
2\bigl(\p_x(\f{\eta_tV_t}{\sqrt h})\,\big|\,\eta_{tt}\bigr)_{L^2}\\
&=\f12\bigl(\p_x(\f{V }{\sqrt h})\eta_{tt}\,\big|\,\eta_{tt}\bigr)_{L^2}+\bigl(\eta_{tt}\p_x \eta+2\eta_t\p_x \eta_t\,\big|\,\frac{V_{tt}}{\sqrt h}\bigr)_{L^2}+
2\bigl(\p_x(\f{\eta_tV_t}{\sqrt h})\,\big|\,\eta_{tt}\bigr)_{L^2},
\end{aligned}\eeno
which along with \eqref{condition for h 2} implies
\beq\label{estimate of S2}\begin{aligned}
|\cS_2|&\lesssim\|V\|_{H^2}\|\eta_{tt}\|_{L^2}^2+\|\eta\|_{H^2}\|\eta_{tt}\|_{L^2}\|V_{tt}\|_{L^2}+\|\eta_t\|_{H^1}^2\|V_{tt}\|_{L^2}+\|\eta_t\|_{H^1}\|V_t\|_{H^1}\|\eta_{tt}\|_{L^2}\\
&\lesssim\bigl(\|V\|_{H^2}^2+\|V_t\|_{H^1}^2+\|V_{tt}\|_{L^2}^2+\|\eta\|_{H^2}^2+\|\eta_t\|_{H^1}^2+\|\eta_{tt}\|_{L^2}^2\bigr)^{\f32}.
\end{aligned}\eeq

(ii). For $\cT_2$, the expression in \eqref{def of LSTN} implies
\beno\begin{aligned}
\cT_2&=\bigl(\frac{1}{\sqrt h}V \p_xV_{tt})\,\big|\,V_{tt}\bigr)_{L^2}
+\bigl(\frac{1}{\sqrt h}(V_{tt}\p_xV+2V_t\p_xV_t)\,\big|\,V_{tt}\bigr)_{L^2}\\
&=-\f12\bigl(\p_x(\frac{V}{\sqrt h})V_{tt}\,\big|\,V_{tt}\bigr)_{L^2}
+\bigl(\frac{1}{\sqrt h}(V_{tt}\p_xV+2V_t\p_xV_t)\,\big|\,V_{tt}\bigr)_{L^2}
\end{aligned}\eeno
which along with \eqref{condition for h 2} gives rise to
\beq\label{estimate of T2}
|\cT_2|\lesssim\|V\|_{H^2}\|V_{tt}\|_{L^2}^2+\|V_t\|_{H^1}^2\|V_{tt}\|_{L^2}
\lesssim\bigl(\|V\|_{H^2}^2+\|V_t\|_{H^1}^2+\|V_{tt}\|_{L^2}^2\bigr)^{\f32}.
\eeq

(iii). For $\cN_2$, the expression in \eqref{def of LSTN} and \eqref{condition for h 2} hints that
\beno
|\cN_2|\lesssim\|\f{\p_xh}{h^{\f32}}\p_t^2(|V|^2)\|_{L^2}\|\p_t^2V\|_{L^2}
\lesssim\|V\|_{H^1}\|V_{tt}\|_{L^2}^2+\|V_t\|_{H^1}^2\|V_{tt}\|_{L^2},
\eeno
which implies
\beq\label{estimate of N2}
|\cN_2|\lesssim\bigl(\|V\|_{H^2}^2+\|V_t\|_{H^1}^2+\|V_{tt}\|_{L^2}^2\bigr)^{\f32}.
\eeq

Combining \eqref{bound of e2}, \eqref{estimate of S2}, \eqref{estimate of T2}
and \eqref{estimate of N2}, using \eqref{totoal functional}, we deduce from \eqref{eq 5 case 2} that
\beq\label{estimate of E2}
\f12\f{d}{dt}\bigl(E_2(t)+\f\e2\wt E_2(t)\bigr)
\lesssim\e\cdot\cE(t)
+\e\bigl(\cE(t)\bigr)^{\f32}.
\eeq

{\bf Step 5. The {\it a priori} energy estimates.} Thanks to \eqref{estimate of E01} and \eqref{estimate of E2}, we get
\beno
\f12\f{d}{dt}\bigl(\sum_{k=0}^2E_k(t)+\f\e2\wt E_2(t)\bigr)
\lesssim\e\cdot\cE(t)+\e\bigl(\cE(t)\bigr)^{\f32},
\eeno
which along with \eqref{functional for V eta t} implies \eqref{space time energy estimate}. Thus, the proof of proposition is completed.
\end{proof}

\subsection{Proof of Theorem \ref{main results 2}.} 
\begin{proof}[Proof of Theorem \ref{main results 2}] In the proof, we focus on the determination of  the existence time interval of solutions and on the closure of the energy estimates. The argument relies on a continuity argument combined with the {\it a priori} energy estimate for system \eqref{Boussinesq-like, d=1}. 
Thanks to Proposition \ref{L2 space time energy estimate}, we first have
\beq\label{eq 1 for thm 2}
   		\f{d}{dt}\bigl(E(t)+\f\e2\wt{E}_2(t)\bigr)
   		\lesssim\e\cdot\cE(t)+\e\bigl(\cE(t)\bigr)^{\f32},  
\eeq
and 
\beq\label{eq 2 for thm 2}
|\wt E_2(t)|\lesssim \|V_t\|_{X_\e^1}^2.
\eeq
We shall use the {\it a priori} estimate \eqref{eq 1 for thm 2} to close the energy estimate. 

Throughout the proof, we always assume that $\e\in(0,\min\{\e_1,C_0^{-1}\})$ and $h-1\in H^5(\R)$ satisfies
\beq\label{ansatz for h}
0<h_0\leq h\leq 2,\quad\|\p_xh\|_{H^4}\leq 1,
\eeq
for some constant $h_0$. Here $\e_1>0$ is a constant given in Lemma \ref{lem for BC} and $C_0>1$ is the constant in \eqref{ansatz for energy}. The proof is divided  into several steps.

{\bf Step 1. Ansatz for the continuity argument.} Let $(V_0,\eta_0)$ be the initial data for $(V,\eta)$ and
\beno
\cE_0\eqdefa\|V_0\|_{X^2_{\e^3}}^2+ \|\eta_0\|_{X^2_{\e^2}}^2.
\eeno
Without loss of generality, we assume that $\cE_0=1$.
Our ansatz for the energy functional is given by
\beq\label{ansatz for energy}
\max_{0\leq t\leq T_0/\e}\cE(t)\leq 8C_0\cE_0=8C_0,
\eeq
where $T_0>0$ and $C_0>1$ are constants, independent of $\e$, to be determined later.

To close the continuity argument, we need to show that there exists $\e_0>0$ such that for any $\e\in(0,\e_0)$, \eqref{ansatz for energy} is improved to 
\beq\label{improvement 1}
\max_{0\leq t\leq T_0/\e}\cE(t)\leq 4C_0\cE_0=4C_0.
\eeq

{\bf Step 2. Equivalent energy functionals.} To close the {\it a priori} energy estimates for \eqref{Boussinesq-like, d=1}, we must verify the following equivalence
\beq\label{equiv functional}
\cE(t)\sim E(t).
\eeq

Recalling the functionals $\cE(t)$ and $E(t)$ were defined in \eqref{totoal functional} and \eqref{functional for V eta t} as follows:
\beno
\cE(t)=\|V\|_{X^2_{\e^3}}^2
+\|V_t\|_{X^1_{\e^2}}^2+\|V_{tt}\|_{X^0_{\e}}^2+\|\eta\|_{X^2_{\e^2}}^2+\|\eta_t\|_{X^1_{\e}}^2+\|\eta_{tt}\|_{L^2}^2,
\eeno
\beno
E(t)=\sum_{k=0}^2 \left\{  \bigl((1-\f\e2 \cP_h^1 )\p_t^kV\,\bigr|\,\p_t^kV\bigr)_{L^2}+\bigl(\p_t^k\eta\,\bigr|\,\p_t^k\eta\bigr)_{L^2}\right\}.
\eeno

By virtue of \eqref{equivalent 1}, we have
\beq\label{equiv for Et}
E(t)\sim\|V\|_{X^0_\e}^2+\|V_t\|_{X^0_\e}^2+\|V_{tt}\|_{X^0_\e}^2
+\|\eta\|_{L^2}^2+\|\eta_t\|_{L^2}^2+\|\eta_{tt}\|_{L^2}^2,
\eeq
which implies that 
\beno
\cE(t)\sim E(t)+\|V_x\|_{X^1_{\e^3}}^2+\|V_{tx}\|_{X^0_{\e^2}}^2
+\|\eta_x\|_{X^1_{\e^2}}^2+\|\eta_{tx}\|_{X^0_{\e}}^2.
\eeno
Thereby, to proof \eqref{equiv functional}, it suffices to show that
\beq\label{eq 3 for thm 2}
\|V_x\|_{X^1_{\e^3}}^2
+\|V_{tx}\|_{X^0_{\e^2}}^2+\|\eta_x\|_{X^1_{\e^2}}^2+\|\eta_{tx}\|_{X^0_\e}^2
\lesssim E(t).
\eeq
That is to say, we shall recover spatial regularity of the solutions from their temporal regularity through the equations in \eqref{Boussinesq-like, d=1}.

\smallskip

 \textbf{ Step 2.1. Recovery of spatial regularity for $\eta_t$ and $V_t$.} 

{\it i). Estimate of $\|\eta_{tx}\|_{X^0_\e}^2$.} Using \eqref{Boussinesq-like, d=1}, we have
\beno
 ( \sqrt h\p_x + \f\e2 B_h(D) ) \p_t \eta = -(1-\f\e2 \cP_h^1) \p_t^2 V - \frac{\e}{4\sqrt h} \p_x \p_t ( \eta^2 + 3 V^2  ) +\frac{\e \p_x h}{4 h^\f32} \p_t (V^2),
\eeno
which implies
\beq\label{eq 4 for thm 2}\begin{aligned}
&\quad\bigl( (\sqrt h\p_x + \f\e2 B_h(D) ) \eta_t\,\big|\,\p_x\eta_t\bigr)_{L^2}\\
&=-\bigl( (1-\f\e2 \cP_h^1) V_{tt}\,\big|\,\p_x\eta_t\bigr)_{L^2}
-\underbrace{\f\e4\bigl(\frac{1}{\sqrt h} \p_x \p_t ( \eta^2 + 3 V^2 )-\frac{\p_x h}{h^\f32} \p_t (V^2)\,\big|\,\p_x\eta_t\bigr)_{L^2}}_{A_1}
\end{aligned}\eeq

Since $\cP_h^1=a_1\p_x(h^2\p_x\quad)$, we have
\beno
\bigl( (1-\f\e2 \cP_h^1)V_{tt}\,\big|\,\p_x\eta_t\bigr)_{L^2}
=\bigl(V_{tt}\,\big|\,\p_x\eta_t\bigr)_{L^2}+\f{a_1\e}{2}\bigl(h^2\p_xV_{tt}\,\big|\,\p_x^2\eta_t\bigr)_{L^2}
\eeno
which along with condition $0<h_0\leq h\leq 2$ hints
\beq\label{eq 5 for thm 2}
\bigl|\bigl( (1-\f\e2 \cP_h^1)V_{tt}\,\big|\,\p_x\eta_t\bigr)_{L^2}\bigr|
\lesssim\|V_{tt}\|_{L^2}\|\eta_{tx}\|_{L^2}+\e^{\f12}\|\p_xV_{tt}\|_{L^2}\cdot\e^{\f12}\|\p_x\eta_{tx}\|_{L^2}\lesssim\|V_{tt}\|_{X^0_\e}\|\eta_{tx}\|_{X^0_\e}.
\eeq

For $A_1$, integration by parts leads to
\beno
A_1=-\f\e2\bigl(\eta\eta_t + 3 VV_t\,\big|\,\p_x(\frac{1}{\sqrt h}\eta_{tx})\bigr)_{L^2}-\f\e4\bigl(\frac{\p_x h}{h^\f32} \p_t (V^2)\,\big|\,\eta_{tx}\bigr)_{L^2},
\eeno
which along with  $\|f\|_{L^\infty}\lesssim\|f\|_{H^1}$ and the condition \eqref{ansatz for h} implies
\beq\label{eq 6 for thm 2}\begin{aligned}
|A_1|&\lesssim\e\bigl(\|\eta\|_{H^1}\|\eta_t\|_{L^2}+\|V\|_{H^1}\|V_t\|_{L^2}\bigr)\cdot\|\eta_{tx}\|_{H^1}\\
&
\lesssim\e^{\f12}\bigl(\|\eta\|_{H^1}\|\eta_t\|_{L^2}+\|V\|_{H^1}\|V_t\|_{L^2}\bigr)\cdot\|\eta_{tx}\|_{X^0_\e}
\end{aligned}\eeq

By virtue of \eqref{B 1} in Lemma \ref{lem for BC}, we have
\beq\label{eq 7 for thm 2}
\bigl( ( h\p_x + \f\e2 B_h(D) ) \eta_t\,\big|\,\p_x\eta_t\bigr)_{L^2}\sim 
\|\eta_{tx}\|_{X^0_\e}^2.
\eeq

Thanks to \eqref{eq 5 for thm 2}, \eqref{eq 6 for thm 2} and \eqref{eq 7 for thm 2}, we deduce from \eqref{eq 4 for thm 2} that
\beno
\|\eta_{tx}\|_{X^0_\e}^2\lesssim\bigl(\|V_{tt}\|_{X^0_\e}+\e^{\f12}\|\eta\|_{H^1}\|\eta_t\|_{L^2}+\e^{\f12}\|V\|_{H^1}\|V_t\|_{L^2}\bigr)\|\eta_{tx}\|_{X^0_\e},
\eeno
 which along with \eqref{ansatz for energy}  and $\e\in (0,C_0^{-1})$ implies 
\beq\label{recovery for eta t}
\|\eta_{tx}\|_{X^0_\e}^2\lesssim\|V_{tt}\|_{X^0_\e}^2+\|\eta_t\|_{L^2}^2+\|V_t\|_{L^2}^2\lesssim E(t).
\eeq

{\it ii). Estimate of $\|V_{tx}\|_{X^0_{\e^2}}^2$.} Due to \eqref{Boussinesq-like, d=1}, we have
\beno
\bigl(\p_x(\sqrt{h}\cdot)+ \f\e2 C_h(D)\bigr) V_t=-\p_t^2 \eta -\f\e2 \p_x\p_t \bigl( \frac{\eta V}{\sqrt h} \bigr)
\eeno
which along with condition \eqref{ansatz for h} and \eqref{ansatz for energy} gives rise to
\beno\begin{aligned}
\|\bigl( \p_x(\sqrt{h}\cdot )+ \f\e2 C_h(D)\bigr) V_t\|_{L^2}&
\lesssim\|\eta_{tt}\|_{L^2}
+\e\|\eta_t\|_{H^1}\|V\|_{H^1}+\e\|\eta\|_{H^1}\|V_t\|_{H^1}\\
&\lesssim\|\eta_{tt}\|_{L^2}
+\e\|V\|_{H^1}(\|\eta_t\|_{L^2}+\|\eta_{tx}\|_{L^2})+\e^{\f12}\|\eta\|_{H^1}\|V_t\|_{X^0_\e}\\
&\lesssim\|\eta_{tt}\|_{L^2}+\|\eta_t\|_{L^2}+\|\eta_{tx}\|_{L^2}+\|V_t\|_{X^0_\e}\lesssim E(t)^{\f12},
\end{aligned}\eeno
where we used \eqref{equiv for Et} and \eqref{recovery for eta t} in the last inequality. Consequently, by virtue of \eqref{C 1} and \eqref{equiv for Et}, we get
\beq\label{recovery for vt}
\|V_{tx}\|_{X^0_{\e^2}}^2\lesssim \|\bigl( \sqrt{h}\p_x+ \f\e2 C_h(D)\bigr) V_t\|_{L^2}^2+\|V_t\|_{L^2}^2\lesssim E(t).
\eeq

\smallskip

\textbf{ Step 2.2. Recovery of spatial regularity for $\eta$ and $V$.} We divide the the spatial regularity of $\eta$ and $V$ into two parts as follows:
\beq\label{equiv for eta x and V x} 
\|\eta_x\|_{X^1_{\e^2}}^2\sim\|\eta_x\|_{X^0_{\e}}^2+\|\eta_{xx}\|_{X^0_{\e^2}}^2,\quad \|V_x\|_{X^1_{\e^3}}^2\sim \|V_x\|_{X^0_\e}^2+\|V_{xx}\|_{X^0_{\e^3}}^2.
\eeq

{\it i). Estimate of $\|\eta_x\|_{X^0_{\e}}$.} Firstly, using the first equation of \eqref{Boussinesq-like, d=1}, we  have
\beno\begin{aligned}
(\sqrt h\p_x + \f\e2 B_h(D) )\eta &= -(1-\f\e2 \cP_h^1) \p_t V - \frac{\e}{4\sqrt h} \p_x ( \eta^2 + 3 V^2  ) +\frac{\e \p_x h}{4 h^\f32}V^2,\\
\end{aligned}\eeno
which gives rise to
\beq\label{eq 8 for thm 2}\begin{aligned}
&\quad\bigl( (\sqrt h\p_x + \f\e2 B_h(D) ) \eta\,\big|\,\p_x\eta\bigr)_{L^2}\\
&=-\bigl( (1-\f\e2 \cP_h^1) V_t\,\big|\,\p_x\eta\bigr)_{L^2}
-\f\e4\bigl(\frac{1}{\sqrt h} \p_x ( \eta^2 + 3 V^2 )-\frac{\p_x h}{h^\f32}(V^2)\,\big|\,\p_x\eta\bigr)_{L^2}.
\end{aligned}\eeq

By virtue of \eqref{B 1} in Lemma \ref{lem for BC}, we get
\beno\begin{aligned}
\|\eta_x\|_{X^0_\e}^2&\lesssim\bigl( (\sqrt h\p_x + \f\e2 B_h(D) ) \eta\,\big|\,\p_x\eta\bigr)_{L^2}\\
&\lesssim\bigl(\|(1-\f\e2 \cP_h^1) V_t\|_{L^2}+\e\Bigl\|\frac{1}{\sqrt h} \p_x ( \eta^2 + 3 V^2 )\Bigr\|_{L^2}+\e\Bigl\|\frac{\p_x h}{h^\f32}(V^2)\Bigr\|_{L^2}\bigr)\cdot\|\eta_x\|_{L^2},
\end{aligned}\eeno
which along with the expression $\cP_h^1=a_1\p_x(h^2\p_x\quad)$, ansatz \eqref{ansatz for h}, \eqref{ansatz for energy}  and $\e\in (0,C_0^{-1})$ shows that
\beno\begin{aligned}
\|\eta_x\|_{X^0_\e}&\lesssim\|V_t\|_{L^2}+\e\|V_{tx}\|_{H^1}+\e\|\eta\|_{H^1}\|\eta_x\|_{L^2}
+\e\|V\|_{H^1}(\|V\|_{L^2}+\|V_x\|_{L^2})\\
&\lesssim\|V_t\|_{L^2}+\e^{\f12}\|V_{tx}\|_{X^0_{\e^2}}+\e\|\eta_x\|_{X^0_\e}+\e^{\f12}\|V\|_{X^0_\e}.
\end{aligned}\eeno
Then for sufficiently small  $\e\in(0, \min\{ \e_1,C_0^{-1} \} )$, there holds
\beno
\|\eta_x\|_{X^0_\e}\lesssim\|V_t\|_{L^2}+\|V_{tx}\|_{X^0_{\e^2}}+\|V\|_{X^0_\e}
\eeno
which together with \eqref{equiv for Et} and \eqref{recovery for vt}  yields to
\beq\label{recovery for eta x}
\|\eta_x\|_{X^0_\e}^2\lesssim E(t).
\eeq

{\it ii). Estimate of $\|V_x\|_{X^0_\e}$.}  Due to the second equation \eqref{Boussinesq-like, d=1}, we have
\beno
\p_x(\sqrt{h}V)+\f\e2 C_h(D) V= -\eta_t -\f\e2 \p_x \bigl( \frac{\eta V}{\sqrt h} \bigr),
\eeno
which along with \eqref{ansatz for h} and \eqref{ansatz for energy} shows that
\beno\begin{aligned}
\bigl(\p_x(&\sqrt{h}V)+\f\e2 C_h(D) V\,|\,V_x\bigr)_{L^2}
=\bigl(\p_x\eta_t\,|\,V\bigr)_{L^2}-\f\e2\bigl(\p_x \bigl( \frac{\eta V}{\sqrt h} \bigr)\,|\,V_x\bigr)_{L^2}\\
&\lesssim\|\eta_{tx}\|_{L^2}\|V\|_{L^2}+\e\|\eta\|_{H^1}\|V\|_{H^1}\|V_x\|_{L^2}\\ 
&\lesssim\|\eta_{tx}\|_{L^2}\|V\|_{L^2}  +\e\|V\|_{L^2}^2+\e\|V_x\|_{L^2}^2.
\end{aligned}\eeno
Then by virtue of \eqref{C 2}, we get 
\beno
\|V_x\|_{X^0_\e}^2\lesssim\bigl(\p_x(\sqrt{h}V)+\f\e2 C_h(D) V\,|\,V_x\bigr)_{L^2}+\|V\|_{L^2}^2\lesssim \|V\|_{L^2}^2+\|\eta_{tx}\|_{L^2}\|V\|_{L^2}   +\e\|V_x\|_{L^2}^2,
\eeno
which along with \eqref{equiv for Et} and \eqref{recovery for eta t} implies
\beno 
\|V_x\|_{X^0_\e}^2\lesssim E(t)+\e\|V_x\|_{X^0_\e}^2.
\eeno
Thus,  for sufficiently small $\e\in(0,\min\{\e_1,C_0^{-1}\})$, there holds
\beq\label{recovery for V x}
\|V_x\|_{X^0_\e}^2\lesssim E(t).
\eeq

{\it iii). Estimate of $\|\eta_{xx}\|_{X^0_{\e^2}}$.}
Using the first equation of \eqref{Boussinesq-like, d=1}, we have
\beno
(\sqrt h\p_x + \f\e2 B_h(D) )\eta_x =-\f{\p_xh}{2\sqrt h}\p_x\eta-\f\e2[\p_x, B_h(D)]\eta + A_2
\eeno
where 
\beno
A_2\eqdefa-\p_x\bigl\{(1-\f\e2 \cP_h^1) \p_t V +\frac{\e}{4\sqrt h} \p_x ( \eta^2 + 3 V^2  ) -\frac{\e \p_x h}{4 h^\f32}V^2\bigr\}.
\eeno
By virtue of \eqref{ansatz for h}, we get
\beq\label{eq 9 for thm 2}\begin{aligned}
\|( \sqrt h\p_x + \f\e2 B_h(D) )\eta_x\|_{L^2}
&\lesssim\|\eta_x\|_{L^2}+\e\|[\p_x, B_h(D)]\eta\|_{L^2}
+\|A_2\|_{L^2}
\end{aligned}\eeq

Since $\cP_h^1=a_1\p_x(h^2\p_x\quad)$, using \eqref{ansatz for h}, we have
\beno\begin{aligned}
&\|\p_x\bigl((1-\f\e2 \cP_h^1) \p_t V\bigr)\|_{L^2}
\lesssim\|V_{tx}\|_{L^2}+\e\|\p_x^2V_{tx}\|_{L^2}+\e\|\p_xh\|_{H^1}\|\p_xV_{tx}\|_{L^2}\lesssim\|V_{tx}\|_{X^0_{\e^2}},\\
&\e\|\p_x\bigl(\frac{1}{\sqrt h} \p_x ( \eta^2 + 3 V^2  )\bigr)\|_{L^2}
+\e\|\p_x\bigl(\frac{\e \p_x h}{4 h^\f32}V^2\bigr)\|_{L^2}\\
&\quad
\lesssim\e\|\eta\|_{H^1}\|\eta_x\|_{H^1}
+\e\|V\|_{H^1}\|V_x\|_{H^1}+\e\|V\|_{H^1}\|V\|_{L^2}\\ 
&\quad\lesssim\e^{\f12}\|\eta\|_{H^1}\|\eta_x\|_{X^0_\e}+\e\|V\|_{H^1}\bigl(\|V_x\|_{L^2}+\|V_{xx}\|_{L^2}\bigr)+\e\|V\|_{H^1}\|V\|_{L^2}
\end{aligned}\eeno
which along with \eqref{ansatz for energy}  and $\e\in (0,C_0^{-1})$ implies
\beno
\|A_2\|_{L^2}\lesssim\|V_{tx}\|_{X^0_{\e^2}}+\|\eta_x\|_{X^0_\e}
+\|V_x\|_{L^2}+\|V\|_{L^2}+\e\|V_{xx}\|_{L^2}.
\eeno
Thanks to \eqref{equiv for Et}, \eqref{recovery for vt}, \eqref{recovery for eta x} and \eqref{recovery for V x}, we get
\beq\label{eq 10 for thm 2}
\|A_2\|_{L^2}\lesssim E(t)^{\f12}.
\eeq

For term $\e\|[\p_x, B_h(D)]\eta\|_{L^2}$, using the expression of $B_h(D)$ in \eqref{def of BC} and ansatz \eqref{ansatz for h}, we obtain
\beno\begin{aligned}
\e\|[\p_x, B_h(D)]\eta\|_{L^2}&\lesssim\e\|\p_x(h^{\f52})\p_x^3\eta\|_{L^2}
+\e\|\p_x(h^{\f32}\p_xh)\p_x^2\eta\|_{L^2}\\
&\quad+\e\|\p_x\bigl(h^{\f12}(\p_xh)^2+h^{\f32}\p_x^2h\bigr)\p_x\eta\|_{L^2}\\ 
&
\lesssim\e\|\eta_x\|_{H^2}\lesssim\e\|\eta_x\|_{L^2}+\e^{\f12}\cdot\e^{\f12}\|\p_x\eta_{xx}\|_{L^2},
\end{aligned}\eeno
which along with  \eqref{interpolation} and \eqref{recovery for eta x} implies
\beq\label{eq 11 for thm 2}
\e\|[\p_x, B_h(D)]\eta\|_{L^2}\lesssim\e\|\eta_x\|_{L^2}+\e^{\f12}\|\eta_{xx}\|_{X^0_{\e^2}}\lesssim E(t)^{\f12}+\e^{\f12}\|\eta_{xx}\|_{X^0_{\e^2}}.
\eeq

By virtue of \eqref{recovery for eta x}, \eqref{eq 10 for thm 2} and \eqref{eq 11 for thm 2}, we deduce from \eqref{eq 9 for thm 2} that
\beq\label{eq 12 for thm 2}
\|( \sqrt h\p_x + \f\e2 B_h(D) )\eta_x\|_{L^2}^2
\lesssim E(t)+\e\|\eta_{xx}\|_{X^0_{\e^2}}^2.
\eeq

Due to \eqref{B 2}, for any sufficiently small $\e\in(0,\e_1)$, there holds
\beno
\|( \sqrt h\p_x + \f\e2 B_h(D) )\eta_x\|_{L^2}^2\sim \|\eta_{xx}\|_{X^0_{\e^2}}^2
\eeno
from which and \eqref{eq 12 for thm 2}, we deduce
\beq\label{recovery for eta xx}
\|\eta_{xx}\|_{X^0_{\e^2}}^2\lesssim E(t),
\eeq
 for any sufficiently small $\e\in(0,\min\{\e_1,C_0^{-1}\})$.

{\it iv). Estimate of $\|V_{xx}\|_{X^0_{\e^3}}$. } By virtue of the second equation of \eqref{Boussinesq-like, d=1}, we have
\beno\begin{aligned}
&\p_x( \sqrt{h} V_x ) + \f\e2( C_h(D) V_x)=-\p_x\bigl(\f{\p_xh}{2\sqrt h}V\bigr)-\f\e2[\p_x,C_h(D)]V-\p_x \eta_t-\f\e2 \p_x^2\bigl( \frac{\eta V}{\sqrt h} \bigr),\\ 
&\p_x( \sqrt{h} V_{xx} ) + \f\e2( C_h(D) V_{xx})=-\p_x\bigl([\p_x^2,\sqrt h] V\bigr)-\f\e2[\p_x^2,C_h(D)]V-\p_x^2 \eta_t-\f\e2 \p_x^3 \bigl( \frac{\eta V}{\sqrt h} \bigr),
\end{aligned}\eeno
which along with tame estimates, \eqref{ansatz for energy} and $\e\in (0,C_0^{-1})$  imply
\beno\begin{aligned}
\bigl\|\bigl(\p_x( \sqrt{h} \cdot) + \f\e2 C_h(D)\bigr) V_x\bigr\|_{L^2}&\lesssim\|V\|_{H^1}
+\|\eta_{tx}\|_{L^2}+\e\|[\p_x,C_h(D)]V\|_{L^2}+ \e \|\eta\|_{H^2} \|V\|_{H^2}\\
& \lesssim \|V\|_{H^1}
+\|\eta_{tx}\|_{L^2}+\e\|[\p_x,C_h(D)]V\|_{L^2}+ \e^\f12 \|V\|_{H^2}, \\
\e^{\f12 }\bigl\|\bigl(\p_x( \sqrt{h} \cdot) + \f\e2 C_h(D)\bigr) V_{xx}\bigr\|_{L^2}
&\lesssim\e^{\f12 }\|V\|_{H^2}
+\e^{\f12 }\|\p_x \eta_{tx}\|_{L^2}+\e^{\f32 }\|[\p_x^2,C_h(D)]V\|_{L^2} \\
&\quad\quad\quad \quad\quad\quad + \e^\f32 \|\eta\|_{H^3} \|V\|_{H^2}+ \e^\f32 \|\eta\|_{H^2} \|V\|_{H^3} \\
&\lesssim  \e^{\f12 }\|V\|_{H^2}
+\e^{\f12 }\|\p_x \eta_{tx}\|_{L^2}+\e^{\f32 }\|[\p_x^2,C_h(D)]V\|_{L^2} + \e \|V\|_{H^3} .
\end{aligned}\eeno

The expression of $C_h(D)$ in \eqref{def of BC} and similar argument as \eqref{eq 11 for thm 2} lead to
\beno\begin{aligned}
\e\|[\p_x,C_h(D)]V\|_{L^2}&\lesssim\e\|\p_xh\|_{H^3}\|V\|_{H^3}\lesssim\e\|V\|_{H^3}, \\
\e^{\f32}\|[\p_x^2,C_h(D)]V\|_{L^2}&\lesssim\e^{\f32}\|\p_xh\|_{H^4}\|V\|_{H^4}
\lesssim \e^{\f32}\|V\|_{H^4}.
\end{aligned}\eeno
Using the interpolation inequality \eqref{interpolation}, we have
\beno\begin{aligned}
\e\|V\|_{H^3}&\lesssim\e\|V\|_{L^2}+\e^{\f12}\cdot\e^{\f12}\|\p_xV_{xx}\|_{L^2}
\lesssim \|V\|_{L^2}+\e^{\f12}\|V_{xx}\|_{X^0_{\e^3}},\\
\e^{\f32}\|V\|_{H^4}&\lesssim \e^{\f32}\|V\|_{L^2}+\e^{\f12}\cdot\e\|\p_x^2V_{xx}\|_{L^2}
\lesssim \|V\|_{L^2}+\e^{\f12}\|V_{xx}\|_{X^0_{\e^3}},
\end{aligned}\eeno
which shows that
\beno 
\e\|[\p_x,C_h(D)]V\|_{L^2}+\e^{\f32}\|[\p_x^2,C_h(D)]V\|_{L^2}
\lesssim \|V\|_{L^2}+\e^{\f12}\|V_{xx}\|_{X^0_{\e^3}}.
\eeno 

Consequently, we get 
\beq\label{eq 13 for thm 2}\begin{aligned}
&\bigl\|\bigl(\p_x( \sqrt{h} \cdot) + \f\e2 C_h(D)\bigr) V_x\bigr\|_{L^2}
+\e^{\f12 }\bigl\|\bigl(\p_x( \sqrt{h} \cdot) + \f\e2 C_h(D)\bigr) V_{xx}\bigr\|_{L^2}\\ 
&\quad 
\lesssim \|V\|_{L^2}+\|V_x\|_{X^0_\e}+\|\eta_{tx}\|_{X^0_\e}+\e^{\f12}\|V_{xx}\|_{X^0_{\e^3}} .
\end{aligned}\eeq

On the other hand, by virtue of \eqref{C 1}, we have
\beno\begin{aligned}
\|V_{xx}\|_{X^0_{\e^2}}^2+\e\|V_{xx}\|_{X^0_{\e^2}}^2
&\lesssim\bigl\|\bigl(\p_x( \sqrt{h} \cdot) + \f\e2 C_h(D)\bigr) V_x\bigr\|_{L^2}^2+\e\bigl\|\bigl(\p_x( \sqrt{h} \cdot) + \f\e2 C_h(D)\bigr) V_{xx}\bigr\|_{L^2}^2\\ 
&\quad 
+\|V_x\|_{L^2}^2+\e\|\p_xV_x\|_{L^2}^2
\end{aligned}\eeno
from which and \eqref{eq 13 for thm 2}, we deduce that
\beno 
\|V_{xx}\|_{X^0_{\e^3}}^2\lesssim\|V_{xx}\|_{X^0_{\e^2}}^2+\e\|V_{xx}\|_{X^0_{\e^2}}^2
\lesssim\|V\|_{L^2}^2+\|V_x\|_{X^0_\e}^2+\|\eta_{tx}\|_{X^0_\e}^2+\e\|V_{xx}\|_{X^0_{\e^3}}^2.
\eeno

Thanks to \eqref{equiv for Et}, \eqref{recovery for V x} and \eqref{recovery for eta t}, we get
\beno
\|V_{xx}\|_{X^0_{\e^3}}^2
\lesssim E(t)+\e\|V_{xx}\|_{X^0_{\e^3}}^2,
\eeno
which hints
\beq\label{recovery for V xx}
\|V_{xx}\|_{X^0_{\e^3}}^2\lesssim E(t),
\eeq
for any sufficiently small $\e\in(0,\e_1)$.

\smallskip

Combining \eqref{recovery for eta x},  \eqref{recovery for V x}, \eqref{recovery for eta xx} and \eqref{recovery for V xx}, we deduce from \eqref{equiv for eta x and V x}  that
\beq\label{recovery the spatial derivative} 
\|\eta_x\|_{X^1_{\e^2}}^2+\|V_x\|_{X^1_{\e^3}}^2\lesssim E(t).
\eeq

\smallskip

\textbf{Step 2.3. Equivalence of the energy functionals.} Thanks to \eqref{recovery for eta t}, \eqref{recovery for vt} and \eqref{recovery the spatial derivative}, we obtain \eqref{eq 3 for thm 2}. Then there holds \eqref{equiv functional}. By virtue of \eqref{equiv functional}, \eqref{eq 2 for thm 2} and \eqref{ansatz for h},  for any sufficiently small $\e\in(0,\min\{\e_1,C_0^{-1}\})$, there holds
\beq\label{equiv functional 2}
E(t)+\f\e2\wt{E}_2(t)\sim \cE(t)\sim E(t),\quad\forall\, t\in[0,T_0/\e],
\eeq
 in which all constants are independent of $T_0,C_0,\e$.
\smallskip 

{\bf Step 3. Initial energy functional.} To obtain the final uniform energy estimate from \eqref{eq 1 for thm 2}, it remains to construct the initial data for $\eta_t,\,V_t,\,\eta_{tt}$ and $V_{tt}$ through system \eqref{Boussinesq-like, d=1} and the initial conditions $\eta|_{t=0}=\eta_0$ and $V|_{t=0}=V_0$.  

{\it Step 3.1. Construction for $V_t|_{t=0}\in X^1_{\e^2}$.} The first equation of \eqref{Boussinesq-like, d=1} shows that
\beno 
(1-\f\e2 \cP_h^1  )V_t|_{t=0} =- \sqrt{h} \p_x \eta_0 -\f\e2 B_h(D) \eta_0 - \frac{\e}{2\sqrt h} \eta_0 \p_x \eta_0 -\frac{3\e }{2\sqrt h} V_0 \p_x V_0 +\frac{\e \p_x h }{4 h^{\f32}} |V_0|^2,
\eeno
which along with the expression of $B_h(D)$, \eqref{ansatz for h} and the assumption $\cE_0=\|V_0\|_{X^2_{\e^3}}^2+\|\eta_0\|_{X^2_{\e^2}}^2=1$ implies
\beq\label{eq 14 for thm 2}
\|(1-\f\e2 \cP_h^1  )V_t|_{t=0}\|_{H^1}
\lesssim\|\p_x\eta_0\|_{H^1}+\e\|\p_x\eta_0\|_{H^3}+\e\|\eta_0\|_{H^2}^2+\e\|V_0\|_{H^2}^2\lesssim\cE_0^{\f12},
\eeq
where we used the interpolation inequality \eqref{interpolation} in the last inequality.

Since $\cP_h^1=a_1\p_x(h^2\p_x\quad)$ and $a_1>0$, we have
\beq\label{eq 15 for thm 2}
\|(1-\f\e2 \cP_h^1  )f\|_{L^2}^2=\|f\|_{L^2}^2+\f{\e^2 a_1^2}{4}\|\p_x(h^2\p_xf)\|_{L^2}^2+\e a_1\|h\p_xf\|_{L^2}^2,
\eeq
and
\beno\begin{aligned}
\|\p_x(h^2\p_xf)\|_{L^2}^2
&=\|h^2\p_x^2f\|_{L^2}^2+4\|h\p_xh\p_xf\|_{L^2}^2+4\bigl(h^3\p_xh\p_x^2f\,\big|\,\p_xf\bigr)_{L^2}\\ 
&=\|h^2\p_x^2f\|_{L^2}^2+4\|h\p_xh\p_xf\|_{L^2}^2-2\bigl(\p_x(h^3\p_xh)\p_xf\,\big|\,\p_xf\bigr)_{L^2}.
\end{aligned}\eeno
Due to \eqref{ansatz for h}, there holds
\beno 
|\bigl(\p_x(h^3\p_xh)\p_xf\,\big|\,\p_xf\bigr)|\lesssim\|\p_x(h^3\p_xh)\|_{H^1}\|\p_xf\|_{L^2}^2\lesssim \|\p_xf\|_{L^2}^2
\eeno 
which  gives rise to
\beno 
\e^2\|\p_x^2f\|_{L^2}^2-\e^2\|\p_xf\|_{L^2}^2\lesssim\e^2\|\p_x(h^2\p_xf)\|_{L^2}^2
\lesssim\e^2\|\p_x^2f\|_{L^2}^2+\e^2\|\p_xf\|_{L^2}^2.
\eeno

Using the condition $0<h_0\leq h\leq 2$, we deduce from \eqref{eq 15 for thm 2} that
\beq\label{eq 16 for thm 2} 
\|(1-\f\e2 \cP_h^1  )f\|_{L^2}^2\sim\|f\|_{L^2}^2+\e^2\|\p_x^2f\|_{L^2}^2=\|f\|_{X^0_{\e^2}}^2,
\eeq
provided that $\e\in(0,\e_1)$ is sufficiently small. Then we have
\beq\label{eq 17 for thm 2}\begin{aligned}
&\|\p_x[(1-\f\e2 \cP_h^1)f]\|_{L^2}^2=\|(1-\f\e2 \cP_h^1)\p_xf-\f{a_1\e}{2}\p_x(\p_xh\p_xf)\|_{L^2}^2\\ 
&=\|(1-\f\e2 \cP_h^1)f_x\|_{L^2}^2+\f{a_1^2\e^2}{4}\|\p_x(\p_xh\p_xf)\|_{L^2}^2
-a_1\e\bigl((1-\f\e2 \cP_h^1)\p_xf\,\big|\,\p_x(\p_xh\p_xf)\bigr)_{L^2}.
\end{aligned}\eeq
Since
\beno\begin{aligned}
&\bigl((1-\f\e2 \cP_h^1)\p_xf\,\big|\,\p_x(\p_xh\p_xf)\bigr)_{L^2} 
=-\bigl(\p_x^2f\,\big|\,\p_xh\p_xf\bigr)_{L^2}
+\f{a_1\e}{2}\bigl(h^2\p_x^2f\,\big|\,\p_x^2(\p_xh\p_xf)\bigr)_{L^2}\\ 
=&\f{1}{2}\bigl(\p_xf\,\big|\,\p_x^2h\p_xf\bigr)_{L^2} + \frac{a_1 \e}{2} \bigl( h^2 \p_x^2 f \,\big|\, \p_x^3 h \p_x f \bigr)_{L^2}
+\f{a_1\e}{4}\bigl((3h^2\p_x^2h-h(\p_xh)^2)\p_x^2f\,\big|\,\p_x^2f\bigr)_{L^2},
\end{aligned}\eeno
using condition \eqref{ansatz for h}, we have
\beq\label{eq 18 for thm 2}
\e\bigl|\bigl((1-\f\e2 \cP_h^1)\p_xf\,\big|\,\p_x(\p_xh\p_xf)\bigr)_{L^2}\bigr|
\lesssim\e\|f_x\|_{L^2}^2+\e^2\|\p_xf_x\|_{L^2}^2
\lesssim\e\|f_x\|_{X^0_{\e^2}}^2,
\eeq
where we used \eqref{interpolation} in the last inequality. While using \eqref{ansatz for h} and \eqref{interpolation} yields to 
\beq\label{eq 19 for thm 2} 
\e^2\|\p_x(\p_xh\p_xf)\|_{L^2}^2\lesssim\e^2\|f_x\|_{H^1}^2
\lesssim\e\|f_x\|_{X^0_{\e^2}}^2.
\eeq

Due to \eqref{eq 16 for thm 2}, there holds
\beno 
\|(1-\f\e2 \cP_h^1)f_x\|_{L^2}^2\sim\|f_x\|_{X^0_{\e^2}}^2.
\eeno
from which, \eqref{eq 17 for thm 2}, \eqref{eq 18 for thm 2} and \eqref{eq 19 for thm 2}, we deduce that
\beq\label{eq 20 for thm 2}
\|\p_x[(1-\f\e2 \cP_h^1)f]\|_{L^2}^2\sim\|f_x\|_{X^0_{\e^2}}^2,
\eeq
provided that $\e\in(0,\e_1)$ is sufficiently small. 

Combining \eqref{eq 16 for thm 2} and \eqref{eq 20 for thm 2}, we obtain
\beq\label{eq 21 for thm 2}
\|(1-\f\e2 \cP_h^1)f\|_{H^1}^2\sim\|f\|_{X^0_{\e^2}}^2+\|f_x\|_{X^0_{\e^2}}^2
\sim\|f\|_{X^1_{\e^2}}^2.
\eeq

Thanks to \eqref{eq 14 for thm 2} and \eqref{eq 21 for thm 2}, we obtain
\beq\label{initial for Vt}
\|V_t|_{t=0}\|_{X^1_{\e^2}}^2\lesssim\cE_0.
\eeq

{\it Step 3.2. Construction for $\eta_t|_{t=0}\in X^1_{\e}$.} The second equation of \eqref{Boussinesq-like, d=1} hints
\beno 
\eta_t|_{t=0}=-\p_x( \sqrt{h} V_0 ) -\f\e2 C_h(D) V_0-\f\e2 \p_x \bigl( \frac{\eta_0 V_0}{\sqrt h} \bigr),
\eeno
which along with the expression of $C_h(D)$, \eqref{ansatz for h} and interpolation inequality \eqref{interpolation} implies
\beno\begin{aligned}
\|\eta_t|_{t=0}\|_{X^1_{\e}}&\lesssim\|\eta_t|_{t=0}\|_{H^1}+\e^{\f12}\|\p_x^2\eta_t|_{t=0}\|_{L^2}\\
&\lesssim\|V_0\|_{X^2_{\e^3}}+\e\|\eta_0\|_{X^2_\e}\|V_0\|_{X^2_\e}^2\lesssim
\|V_0\|_{X^2_{\e^3}}.
\end{aligned}\eeno
Then we obtain 
\beq\label{initial for etat}
\|\eta_t|_{t=0}\|_{X^1_{\e}}^2\lesssim\cE_0.
\eeq

{\it Step 3.3. Construction for $V_{tt}|_{t=0}\in X^0_{\e}$.} Using the first equation of \eqref{Boussinesq-like, d=1} leads to
\beno\begin{aligned}
(1-\f\e2 \cP_h^1  )V_{tt}|_{t=0}& =\Bigl(- \sqrt{h} \p_x \eta_t -\f\e2 B_h(D) \eta_t - \frac{\e}{2\sqrt h}\p_x(\eta\eta_t) \\
&\quad-\frac{3\e }{2\sqrt h}\p_x(VV_t) +\frac{\e \p_x h }{2 h^{\f32}}VV_t\Bigr)\Big|_{t=0},
\end{aligned}\eeno
which along with  the condition \eqref{ansatz for h} yields to
\beq\label{eq 22 for thm 2}\begin{aligned}
\bigl((1-\f\e2 \cP_h^1  )V_{tt}\bigl|_{t=0}\,\big|\,V_{tt}\bigl|_{t=0}\bigr)_{L^2}
&\lesssim\Bigl(\|\eta_t|_{t=0}\|_{H^1}+\e\|\eta_0\|_{H^1}\|\eta_t|_{t=0}\|_{H^1}+\e\|V_0\|_{H^1}\|V_t|_{t=0}\|_{H^1}\Bigr)_{}\\ 
&\qquad\times\|V_{tt}|_{t=0}\|_{L^2}+\e\bigl|\bigl(B_h(D)\eta_t\,\big|\,V_{tt}\bigr)_{L^2}\big|_{t=0}\bigr|.
\end{aligned}\eeq

Since $\cP_h^1=a_1\p_x(h^2\p_x\quad)$, $a_1>0$ and $0<h_0\leq h\leq 2$, there holds
\beno
\bigl((1-\f\e2 \cP_h^1  )f\,\big|\,f\bigr)_{L^2}
=\|f\|_{L^2}^2+\f{a_1\e}{2}\|h\p_xf\|_{L^2}^2\sim\|f\|_{X^0_\e}^2.
\eeno
Thanks to the ansatz $\cE_0=1$, \eqref{initial for Vt} and \eqref{initial for etat}, we deduce from \eqref{eq 22 for thm 2} that
\beq\label{eq 23 for thm 2}
\|V_{tt}|_{t=0}\|_{X^0_\e}^2\lesssim\cE_0^{\f12}\cdot\|V_{tt}|_{t=0}\|_{L^2}
+\e\bigl|\bigl(B_h(D)\eta_t\,\big|\,V_{tt}\bigr)_{L^2}\big|_{t=0}\bigr|.
\eeq

By virtue of the expression of $B_h(D)$ in \eqref{def of BC}, we have
\beno 
\bigl(B_h(D)\eta_t\,\big|\,V_{tt}\bigr)_{L^2}
=-b_1\bigl(\p_x(h^2\p_x\eta_t)\,\big|\,\p_x(\sqrt h V_{tt})\bigr)_{L^2}
+\bigl((b_2+b_3)h^{\f32}\p_xh\p_x^2\eta_t+r_1(h)\p_x\eta_t\,\big|\,V_{tt}\bigr)_{L^2},
\eeno
which along with the expression of $r_1(h)$ and \eqref{ansatz for h} yields to
\beno
\e\big| \bigl(B_h(D)\eta_t\,\big|\,V_{tt}\bigr)_{L^2}\big|_{t=0} \big|
\lesssim\e\|\eta_t|_{t=0}\|_{H^2}\|V_{tt}|_{t=0}\|_{H^1}\lesssim\|\eta_t|_{t=0}\|_{X^1_{\e}}\|V_{tt}|_{t=0}\|_{X^0_\e}.
\eeno 
Consequently, using \eqref{initial for etat}, we deduce from \eqref{eq 23 for thm 2} that
\beno 
\|V_{tt}|_{t=0}\|_{X^0_\e}^2\lesssim\cE_0^{\f12}\cdot\|V_{tt}|_{t=0}\|_{X^0_\e},
\eeno
which gives rise to
\beq\label{initial for Vtt}
\|V_{tt}|_{t=0}\|_{X^0_\e}^2\lesssim\cE_0.
\eeq

{\it Step 3.4. Construction for $\eta_{tt}|_{t=0}\in L^2(\R)$.} Due to the second equation of \eqref{Boussinesq-like, d=1}, we have
\beno 
\eta_{tt}|_{t=0}=\Bigl(-\p_x( \sqrt{h} V_t ) -\f\e2 C_h(D) V_t-\f\e2 \p_x \bigl( \frac{\eta_t V+\eta V_t}{\sqrt h} \bigr)\Bigr)\Big|_{t=0},
\eeno
which together with the expression of $C_h(D)$ in \eqref{def of BC}, ansatz \eqref{ansatz for h} and $\cE_0=1$ shows that
\beno\begin{aligned}
\|\eta_{tt}|_{t=0}\|_{L^2}&\lesssim\|V_t|_{t=0}\|_{H^1}+\e\|V_t|_{t=0}\|_{H^3}
+\e\|\eta_t|_{t=0}\|_{H^1}\|V_0\|_{H^1}+\e\|\eta_0\|_{H^1}\|V_t|_{t=0}\|_{H^1}\\ 
&\lesssim\|V_t|_{t=0}\|_{X^1_{\e^2}}+\e\|\eta_t|_{t=0}\|_{H^1}.
\end{aligned}\eeno
Using \eqref{initial for Vt} and \eqref{initial for etat}, we get
\beq\label{initial for etatt}
\|\eta_{tt}|_{t=0}\|_{L^2}^2\lesssim\cE_0.
\eeq

{\it Step 3.5. Initial energy functional.} Combining \eqref{initial for Vt}, \eqref{initial for etat}, \eqref{initial for Vtt} and \eqref{initial for etatt}, we obtain 
\beq\label{initial Et}
\cE(0)\lesssim \cE_0.
\eeq

{\bf Step 4. Final energy estimates and the closure of the continuity argument.} Going back to the {\it a priori} energy estimate \eqref{eq 1 for thm 2}, using \eqref{equiv functional 2} (i.e., $E(t)+\f\e2\wt{E}_2(t)\sim\cE(t)\sim E(t)$), there exists sufficiently small $\e_0\in(0,\min\{\e_1,C_0^{-1} \}]$ such that for any $\e\in(0,\e_0)$,
\beno
 \f{d}{dt}\bigl(E(t)+\f\e2\wt{E}_2(t)\bigr)
 \lesssim\e\cdot E(t)+\e\bigl(E(t)\bigr)^{\f32},\quad\forall\, 0<t\leq T_0/\e. 
\eeno
Using \eqref{equiv functional 2} again, we get
\beno 
E(t)\lesssim E(0)+\e t\cdot\max_{0\leq \tau\leq t} E(\tau)
+\e t\cdot\bigl(\max_{0\leq \tau\leq t} E(\tau)\big)^{\f32},\quad\forall\, 0<t\leq T_0/\e.
\eeno
Consequently, by virtue of \eqref{equiv functional}, \eqref{initial Et} and $\cE(0)\sim E(0)$, there exist constants $C_1,C_2>1$  independent of $T_0,C_0,\e$, such that
\beq\label{eq 24 for thm 2}
\cE(0)\leq C_1\cE_0,
\eeq
and 
\beq\label{eq 25 for thm 2}
\max_{0\leq\tau\leq t} \cE(\tau)\leq 2C_1\cE_0+C_2\e t\cdot\bigl(1+\bigl(\max_{0\leq \tau\leq t} \cE(\tau)\big)^{\f12}\bigr)\cdot\max_{0\leq \tau\leq t} \cE(\tau),\quad \forall\, 0<t\leq T_0/\e.
\eeq
Here \eqref{eq 24 for thm 2} guarantees that  \eqref{Boussinesq-like, d=1}-\eqref{initial data} admits solutions on a short time interval $[0,T]$, during which the ansatz \eqref{ansatz for energy} still holds. 

Taking $C_0=C_1$, using the ansatz \eqref{ansatz for energy}, we derive from \eqref{eq 25 for thm 2} that
\beno
\max_{0\leq\tau\leq t} \cE(\tau)\leq 2C_1\cE_0+C_2\e t\cdot\bigl(1+\sqrt{8C_1}\bigr)\cdot\max_{0\leq \tau\leq t} \cE(\tau),\quad \forall\, 0<t\leq T_0/\e.
\eeno
Taking $T_0=\f{1}{2C_2(1+\sqrt{8C_1})}$, we obtain
\beq\label{finial energy estimate} 
\max_{0\leq\tau\leq T_0/\e} \cE(\tau)\leq 4C_1\cE_0=4C_0\cE_0.
\eeq
This is the desired energy estimate \eqref{total energy estimate for case 2}.
It also improves the ansatz \eqref{ansatz for energy} so that the continuity argument is closed. This concludes   the proof of Theorem \ref{main results 2}.
\end{proof}

 \vspace{0.5cm}
			
\noindent {\bf Acknowledgments.}  The work of the second  author was partially  supported by the ANR project ISAAC (ANPG2023).
The work of the first and the third authors were partially supported by NSF of China under grants 12171019.

\end{document}